\newtheorem{theorem}{Theorem}[section]
\newtheorem{proposition}[theorem]{Proposition}
\newtheorem{corollary}[theorem]{Corollary}
\newtheorem{lemma}[theorem]{Lemma}
\theoremstyle{definition}
\newtheorem{remark*}[theorem]{}
\theoremstyle{remark}
\newtheorem{remark}[theorem]{Remark}
\newcommand{\C}{\mathbb C}
\newcommand{\Z}{\mathbb Z}
\DeclareMathOperator{\Hom}{Hom}
\DeclareMathOperator{\Der}{Der}
\newcommand{\mc}[1]{\mathcal{#1}}
\newcommand{\mb}[1]{\mathbb{#1}}
\newcommand{\frk}[1]{\mathfrak{#1}}
\tikzset{Box/.style={very thick, rounded corners}}
\tikzset{marked/.style={star, star point height = .75mm, star points =5, fill=black,minimum size=2mm, inner sep=0mm} }
\tikzset{verythickline/.style = {line width=5pt}}
\tikzset{thickline/.style = {line width=4pt}}
\tikzset{medthick/.style = {line width=3pt}}
\tikzset{count/.style = {fill=white,circle,draw,thin, inner sep=2pt}}
\tikzset{rcount/.style = {fill=white,rectangle,draw,thin,inner sep=2pt, rounded corners}}
\tikzset{flip/.style = {rotate=0}}
\begin{document}

\title{On the symmetric enveloping algebra of planar algebra subfactors}
\author{Stephen Curran}\thanks{S.C.: Research supported by an NSF postdoctoral fellowship and NSF grant DMS-0900776.}
\address{S.C.: Department of Mathematics, UCLA, Los Angeles, CA 90095.}
\email{\href{mailto:curransr@math.ucla.edu}{curransr@math.ucla.edu}}
\author{Vaughan F. R. Jones}\thanks{V.J.: Research supported by NSF grant DMS-0856316.}
\address{V.J.: Department of Mathematics, University of California, Berkeley, CA 94720.}
\email{\href{mailto:vfr@math.berkeley.edu}{vfr@math.berkeley.edu}}
\author{Dimitri Shlyakhtenko}\thanks{D.S.: Research supported by NSF grant DMS-0900776.\\
\indent S.C., V.J. and D.S. are also supported by DARPA Award 0011-11-0001.}
\address{D.S.: Department of Mathematics, UCLA, Los Angeles, CA 90095.}
\email{\href{mailto:shlyakht@math.ucla.edu}{shlyakht@math.ucla.edu}}

\begin{abstract}
We give a diagrammatic description of Popa's symmetric enveloping algebras associated to planar algebra subfactors. As an application we construct a natural family of derivations on these factors, and compute a certain free entropy dimension type quantity.
\end{abstract}

\maketitle

\section*{Introduction}

Inspired by Voiculescu's \cite{voi1} description of the large $N$ limit of the distribution of families of independent random matrices, Guionnet, Jones and Shlyakhtenko \cite{gjs1} introduced diagrammatic traces on a tower of graded algebras $Gr_0(\mc P) \subset Gr_1(\mc P) \subset \dotsb$ which is naturally associated to a subfactor planar algebra $\mc P$.  The main result of \cite{gjs1} is that the completions form a tower of II$_1$ factors $M_0 \subset M_1 \subset \dotsb$ whose standard invariant is $\mc P$, thus giving a diagrammatic proof of a fundamental result of Popa \cite{pop2}.  In subsequent work \cite{gjs2} they were able to identify the isomorphism class of these factors in the case that $\mc P$ is finite-depth.  Explicitly, $M_k$ is isomorphic to an interpolated free group factor $L(\mb F_{r_k})$, where
\begin{equation*}
 r_k = 1 + 2\delta^{-2k}I(\delta-1). \label{param} \tag{$*$}
\end{equation*}
Here $\delta^2$ is the index $[M_1:M_0]$, see Section \ref{sec:enveloping} for the definition of the global index $I$.  The fact that $M_k$ is an interpolated free group factor was also obtained independently by Kodiyalam and Sunder \cite{ks2}.  See also \cite{gjsz},\cite{jsw},\cite{ks1}.

Given an inclusion of II$_1$ factors $N \subset M$, Popa \cite{pop1} has shown that there is a unique II$_1$ factor $M \boxtimes_{e_N} M^{op}$, called the \textit{symmetric enveloping algebra} of the inclusion, which is generated by a copy of $M \otimes M^{op}$ and a projection $e_N$ which is simultaneously the Jones projection for the inclusions $N \subset M$ and $N^{op} \subset M^{op}$.  This algebra, together with the inclusion $M \otimes M^{op} \subset M \boxtimes_{e_N} M^{op}$, encodes a number of important analytic properties of the subfactor $N \subset M$, such as amenability and property T \cite{pop3}.  When the inclusion $N \subset M$ is \textit{strongly amenable}, for example if $M$ is hyperfinite and $N \subset M$ is finite-depth, then $M \otimes M^{op} \subset M \boxtimes_{e_N} M^{op}$ is isomorphic to Ocneanu's \textit{asymptotic inclusion} \cite{ocn}.  

In this paper we give a diagrammatic construction of the symmetric enveloping algebras of the inclusions $M_k \subset M_{k+1}$.  (In the case that $\mc P$ is the Temperley-Lieb planar algebra, this was announced in \cite{shl1}).  Using this description, we give a simple diagrammatic proof that the index $[M_k \boxtimes_{e_{k-1}} M_k^{op}:M_k \otimes M_k^{op}]$ is the global index $I$, first computed by Ocneanu \cite{ocn}.  As an application we construct a module of closable derivations $\delta:Gr_0 \to Gr_0 \otimes Gr_0^{op}$, related to the Schwinger-Dyson equation from \cite{gjsz}, whose Murray von-Neumann dimension (after completion) is precisely the parameter $r_0$ from $(*)$.  Since the dimensions of such spaces of derivations are well-known to be related to Voiculescu's free entropy dimension (see e.g. \cite{shl2}), this provides a more conceptual understanding of the rather mysterious parameters $r_k$ computed in \cite{gjs2}.

Our paper is organized as follows.  We begin by briefly recalling the construction from \cite{gjs1} in Section \ref{sec:background}.  In Section \ref{sec:semi-finite} we construct an auxiliary semi-finite algebra $\mathfrak M$, starting from a subfactor planar algebra $\mc P$.  Section \ref{sec:enveloping} contains our main result: the symmetric enveloping algebras of the inclusions $M_{k} \subset M_{k+1}$ are naturally isomorphic to certain compressions of $\mathfrak M$.  We then compute the index of $M_{k} \otimes M_{k}^{op} \subset M_{k} \boxtimes_{e_{k-1}} M_{k}^{op}$.  In Section \ref{sec:derivations} we construct derivations on $Gr_0$, and compute a free entropy dimension type quantity for $M_0$.

\section{Planar algebra subfactors}\label{sec:background}
In this section we briefly recall the construction from \cite{gjs1}.  The reader is referred to this paper and to \cite{jsw}, \cite{gjs2} for further details.

Let $\mc P$ be a subfactor planar algebra with graded components $P_k^\epsilon$, $k = 0,1,2,\dotsc$, $\epsilon = \pm$.  For $n,k \geq 0$ and $\epsilon = \pm$ let $P^\epsilon_{n,k}$ be a copy of $P^\epsilon_{n+k}$.  Elements of $P^\epsilon_{n,k}$ will be represented by diagrams
\begin{equation*}
 \begin{tikzpicture}[scale=.75]
 \draw[Box](0,0) rectangle (2,1);
  \draw[verythickline](0,.5) -- (-.5,.5); \draw[verythickline](2,.5) -- (2.5,.5);
\draw[verythickline](1,1) -- (1,1.25);
 \node at (1,.5) {$x$}; \node[marked] at (-.1,1.05) {};
 \end{tikzpicture}\end{equation*}
where the thick lines to the left and right represent $k$ strings, and the thick line at top represents $2n$ strings.  We will typically suppress the marked point $\star$, and take the convention that it occurs at the top-left corner and is shaded according to $\epsilon$.

Define a product $\wedge_k:P^\epsilon_{n,k} \times P^\epsilon_{m,k} \to P^\epsilon_{n+m,k}$ by
\begin{equation*}
\begin{tikzpicture}[thick,scale=.75]
\draw[Box](0,0) rectangle (2,1);
  \draw[verythickline](0,.5) -- (-.5,.5); \draw[verythickline](2,.5) -- (2.5,.5);
 \draw[verythickline](1,1) -- (1,1.25);
 \node at (1,.5) {$x$}; \node[left] at (-.7,.5) {$x \wedge_k y =$};
\begin{scope}[xshift=2.9cm]
\draw[Box](0,0) rectangle (2,1);
  \draw[verythickline](0,.5) -- (-.5,.5); \draw[verythickline](2,.5) -- (2.5,.5);
 \draw[verythickline](1,1) -- (1,1.25);
 \node at (1,.5) {$y$};
\end{scope}
\end{tikzpicture}
\end{equation*}
The involution $\dagger:P^{\epsilon}_{n,k} \to P^{\epsilon}_{n,k}$ is given by
\begin{equation*}
\begin{tikzpicture}[thick,scale=.75]
\draw[Box](0,0) rectangle (2,1);
  \draw[verythickline](0,.5) -- (-.5,.5); \draw[verythickline](2,.5) -- (2.5,.5);
 \draw[verythickline](1,1) -- (1,1.25);
 \node at (1,.5) {$x^\dagger$}; \node at (3,.5) {$=$};
\begin{scope}[xshift=4cm]
\draw[Box](0,0) rectangle (2,1);
  \draw[verythickline](0,.5) -- (-.5,.5); \draw[verythickline](2,.5) -- (2.5,.5);
 \draw[verythickline](1,1) -- (1,1.25);
 \node at (1,.5) {$x^*$};
 \draw[Box] (-.5,-.25) rectangle (2.5,1.25); \node[marked] at (2.1,1.05) {}; \node[marked] at (-.6,1.3) {};
\end{scope}
\end{tikzpicture}
\end{equation*}

The \textit{Voiculescu trace} $\tau_k:P^\epsilon_{n,k} \to \C$ is defined by
\begin{equation*}
\begin{tikzpicture}[scale=.75]
  \draw[Box] (0,0) rectangle (2,1); \node at (1,.5) {$x$};
  \draw[thickline] (0,.5) arc(90:270:.25cm and .375cm) -- (2,-.25) arc(-90:90:.25cm and .375cm);
  \draw[Box] (0,1.25) rectangle (2,1.75); \node[scale=.8] at (1,1.5) {$\sum TL$};
\draw[verythickline] (1,1) -- (1,1.25);
  \node[left] at (-.5,.75) {$\tau_k(x) = \delta^{-k} \cdot$};
 \end{tikzpicture}
\end{equation*}
where $\sum TL$ denotes the sum over all loopless Temperley-Lieb diagram with $2n$ boundary points.  Let $Gr_k^\epsilon(\mc P) = \bigoplus_{n \geq 0} P^\epsilon_{n,k}$, and observe that the formulas above give $Gr^\epsilon_k(\mc P)$ the structure of a graded $*$-algebra with trace $\tau_k$.  The unit of $Gr^\epsilon_k(\mc P)$ is the element of $P^\epsilon_{0,k}$ consisting of $k$ parallel lines. 

Let $\mathbf{e}_{k,\epsilon}$ denote the following element of $P^\epsilon_{0,k+2}$, $k \geq 0$:
\begin{equation*}
\begin{tikzpicture}
\begin{scope}[xscale=.75,yscale=.5]
 \draw[Box] (0,0) rectangle (1,1.5); \draw[verythickline] (0,1.15) -- (1,1.15);
 \draw[thick] (0,.75) arc(90:-90:.25cm); \draw[thick] (1,.75) arc(90:270:.25cm);
\node[left=2mm,scale=1] at (0,.75) {$\mathbf{e}_{k,\epsilon} = $};
\end{scope}
\end{tikzpicture}
\end{equation*}
Note that there are natural inclusions of $Gr_k^\epsilon(\mc P)$ into $Gr_{k+1}^{\epsilon}(\mc P)$ defined by
\begin{equation*}
 \begin{tikzpicture}[scale=.75]
 \draw[Box](0,0) rectangle (2,1);
  \draw[verythickline](0,.5) -- (-.5,.5); \draw[verythickline](2,.5) -- (2.5,.5);
\draw[verythickline](1,1) -- (1,1.25);
 \node at (1,.5) {$x$};
\begin{scope}[xshift=4.2cm]
 \draw[Box](0,0) rectangle (2,1);
  \draw[verythickline](0,.5) -- (-.5,.5); \draw[verythickline](2,.5) -- (2.5,.5);
\draw[verythickline](1,1) -- (1,1.25);
\draw[thick] (-.5,-.25) -- (2.5,-.25);
 \node at (1,.5) {$x$};
\node[left=1mm,scale=1.2] at (-.5,.5) {$\mapsto$};
\end{scope}
 \end{tikzpicture}
\end{equation*}
The main result of \cite{gjs1} is the following:

\begin{theorem}
For $k \geq 0$ and $\epsilon = \pm$, the Voiculescu trace $\tau_k$ is a faithful tracial state on $Gr_k^\epsilon(\mc P)$, and its GNS completion is a $II_1$ factor $M_k^\epsilon$ as long as $\delta > 1$.  The inclusions $Gr_k^\epsilon(\mc P) \subset Gr_{k+1}^\epsilon(\mc P)$ extend to $M_k^\epsilon \subset M_{k+1}^\epsilon$, and $(M_{k+1}^\epsilon,\mathbf e_{k,\epsilon})$ is the tower of the basic construction for the subfactor $M_0^\epsilon \subset M_1^\epsilon$.  Moreover, the planar algebra of $M_0 \subset M_1$ is isomorphic to $\mc P$.
\end{theorem}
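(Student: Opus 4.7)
The plan is to proceed through the assertions in order, relying throughout on diagrammatic manipulations. First I would verify that $\tau_k$ is a tracial state. The cyclic invariance $\tau_k(x \wedge_k y) = \tau_k(y \wedge_k x)$ is immediate, since swapping the factors amounts to a cyclic rotation of the attaching pattern on top, and the sum over all loopless Temperley--Lieb diagrams on $2n$ points is rotation-invariant. Positivity of $\tau_k(x^\dagger \wedge_k x)$ is the first substantive point: the sum over TL diagrams on $2n$ points is a specific positive element in the Temperley--Lieb algebra relative to the Markov inner product (which is positive definite when $\delta \geq 1$), so pairing against $x^\dagger \wedge_k x$ yields a nonnegative number, and faithfulness for $\delta > 1$ follows from the non-degeneracy of the Markov trace. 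An alternative route is to realize $\tau_k$ as a large-$N$ limit of normalized traces of an explicit random matrix model built from $\mc P$, in which positivity and faithfulness are automatic. Compatibility of $\tau_{k+1}$ with $\tau_k$ under the natural inclusion is a one-line check: the extra through-string at the bottom produces one additional closed loop of value $\delta$, exactly balancing the shift from $\delta^{-k}$ to $\delta^{-(k+1)}$ in the normalization, so the inclusions extend to the GNS completions.

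For factoriality and the identification of $(M_{k+1}^\epsilon, \mathbf{e}_{k,\epsilon})$ as the basic construction tower, I would argue diagrammatically. Factoriality of $M_k^\epsilon$ can be extracted by exhibiting concrete generating elements (for example, single boxes in $P^\epsilon_{1,k}$), whose joint moments computed via TL sums identify them with free-probabilistic objects whose generated von Neumann algebras are known to be II$_1$ factors; the random matrix picture gives the same conclusion with less work. To recognize the Jones tower, I would verify the picture-level identities: $\delta \mathbf{e}_{k,\epsilon}$ is a projection, the $\mathbf{e}_{k,\epsilon}$ satisfy the Temperley--Lieb relations with their neighbors, $\tau_{k+1}(\mathbf{e}_{k,\epsilon}\, x) = \delta^{-2}\, \tau_k(x)$ for $x \in Gr_k^\epsilon(\mc P)$, and the crucial pull-down identity $\mathbf{e}_{k,\epsilon}\, x\, \mathbf{e}_{k,\epsilon} = E_{M_k^\epsilon}(x)\, \mathbf{e}_{k,\epsilon}$---the last of which holds because capping a diagram with $\mathbf{e}_{k,\epsilon}$ on both sides is precisely the conditional expectation onto the subalgebra with one fewer side-string. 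These relations together characterize the basic construction.

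Finally, to identify the planar algebra of $M_0 \subset M_1$ with $\mc P$, I would embed $P_k^\epsilon$ into $(M_0^\epsilon)' \cap M_k^\epsilon$ via its natural copy in $P^\epsilon_{k,0}$ and establish equality using a diagrammatic argument: any element commuting with a generic family of single-box elements of $Gr_0^\epsilon(\mc P)$ cannot have through-strings on its sides, forcing it into this copy of $P_k^\epsilon$. The planar operations---rotations, tangles, Jones projections---then match on the two sides by construction. The main obstacle across the whole program is the positivity and faithfulness of $\tau_k$; once these are secured, the remaining structural statements reduce to essentially formal diagrammatic identities, and factoriality is most cleanly obtained by comparison with the random matrix model.
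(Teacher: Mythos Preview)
This theorem is not proved in the present paper: it is the main result of \cite{gjs1}, recalled here as background and closed with a bare \qed. So there is no in-paper argument to compare your proposal against; the paper simply cites the literature.

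That said, as a sketch of the \cite{gjs1}/\cite{jsw} proof your outline has one genuine gap. Your positivity step asserts that ``the sum over all loopless Temperley--Lieb diagrams on $2n$ points is a specific positive element in the Temperley--Lieb algebra relative to the Markov inner product,'' and then deduces $\tau_k(x^\dagger \wedge_k x)\geq 0$ from that. But this is exactly the nontrivial content: one must \emph{prove} that $\sum TL$ acts positively, not assume it. In \cite{gjs1} this is handled by realizing $\tau_k$ as a limit of honest matrix traces (your ``alternative route''), while \cite{jsw} avoids random matrices by the orthogonalization change-of-variables $X$ (the same device reproduced in Section~\ref{sec:orthogonal} here), under which the trace becomes the planar-algebra inner product on the degree-zero piece and positivity/faithfulness are axioms of $\mc P$. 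Your sentence about non-degeneracy of the Markov trace does not by itself give faithfulness of $\tau_k$ on all of $Gr_k^\epsilon(\mc P)$.

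The remaining structural claims (trace compatibility under inclusion, the Jones-projection and pull-down identities for $\mathbf e_{k,\epsilon}$, and the embedding $P_k^\epsilon\hookrightarrow (M_0^\epsilon)'\cap M_k^\epsilon$) are, as you say, diagrammatic checks and match the strategy in \cite{gjs1}. Factoriality, however, is not as lightweight as ``comparison with the random matrix model'': in \cite{gjs1} it requires an explicit analysis (amalgamated-free structure of the generators, or equivalently the orthogonal-model description), and your proposal leaves this step at the level of an intention rather than an argument.
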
 \qed

\section{A semi-finite enveloping algebra associated to a planar algebra}\label{sec:semi-finite}

Let $\mc P$ be a subfactor planar algebra.  For integers $k_1,k_2,s,t$ which add to $2n$, and $\epsilon = \pm$, let $V_{k_1,k_2}^\epsilon(s,t)$ be a copy of $P^\epsilon_{n}$.  Elements of $V^\epsilon_{k_1,k_2}(s,t)$ will be represented by diagrams of the form
\begin{equation*}
 \begin{tikzpicture}[scale=.75]
 \draw[Box](0,0) rectangle (2,1);
  \draw[verythickline](0,.5) -- (-.5,.5); \draw[verythickline](2,.5) -- (2.5,.5);
 \draw[verythickline](1,0) -- (1,-.25); \draw[verythickline](1,1) -- (1,1.25);
 \node at (1,.5) {$x$};
 \end{tikzpicture}
\end{equation*}
where the thick lines at left, right, top and bottom represent $k_1,k_2,s$ and $t$ strings, respectively.  We take the convention that the marked point occurs at the upper left corner, which is shaded $\epsilon$.  If $\epsilon$ is not specified we assume that $\epsilon = +$.

Define a product $\wedge:V^\epsilon_{k_1,k_2}(s,t) \times V^{\epsilon'}_{k_1',k_2'}(s',t') \to V^\epsilon_{k_1,k_2'}(s+s',t+t')$ to be zero unless $k_2 = k_1'$ and $\epsilon' = (-1)^s \cdot \epsilon$, and otherwise by
\begin{equation*}
\begin{tikzpicture}[thick,scale=.75]
\draw[Box](0,0) rectangle (2,1);
  \draw[verythickline](0,.5) -- (-.5,.5); \draw[verythickline](2,.5) -- (2.5,.5);
 \draw[verythickline](1,0) -- (1,-.25); \draw[verythickline](1,1) -- (1,1.25);
 \node at (1,.5) {$x$}; \node[left] at (-.7,.5) {$x \wedge y =$};
\begin{scope}[xshift=2.9cm]
\draw[Box](0,0) rectangle (2,1);
  \draw[verythickline](0,.5) -- (-.5,.5); \draw[verythickline](2,.5) -- (2.5,.5);
 \draw[verythickline](1,0) -- (1,-.25); \draw[verythickline](1,1) -- (1,1.25);
 \node at (1,.5) {$y$};
\end{scope}
\end{tikzpicture}
\end{equation*}
The adjoint $\dagger:V_{k_1,k_2}^{\epsilon}(s,t) \to V_{k_2,k_1}^{(-1)^s\epsilon}(s,t)$ is defined by
\begin{equation*}
\begin{tikzpicture}[thick,scale=.75]
\draw[Box](0,0) rectangle (2,1);
  \draw[verythickline](0,.5) -- (-.5,.5); \draw[verythickline](2,.5) -- (2.5,.5);
 \draw[verythickline](1,0) -- (1,-.25); \draw[verythickline](1,1) -- (1,1.25);
 \node at (1,.5) {$x^\dagger$};
\node at (3,.5) {$=$};
\begin{scope}[xshift=4cm]
\draw[Box](0,0) rectangle (2,1);
  \draw[verythickline](0,.5) -- (-.5,.5); \draw[verythickline](2,.5) -- (2.5,.5);
 \draw[verythickline](1,0) -- (1,-.25); \draw[verythickline](1,1) -- (1,1.25);
 \node at (1,.5) {$x^*$}; \node[marked] at (2.1,1.05) {};
\draw[Box](-.5,-.25) rectangle (2.5,1.25); \node[marked] at (-.6,1.3) {};
\end{scope}
\end{tikzpicture} 
\end{equation*}
Define $Tr:V^{\epsilon}_{k_1,k_2}(s,t) \to \C$ to be zero unless $k_1 = k_2$ and both $s$ and $t$ are even, and otherwise by
\begin{equation*}
\begin{tikzpicture}[scale=.75]
  \draw[Box] (0,0) rectangle (2,1); \node at (1,.5) {$x$};
  \draw[thickline] (0,.5) arc(90:270:.5cm and .75cm) -- (2,-1) arc(-90:90:.5cm and .75cm);
  \draw[Box] (0,-.25) rectangle (2,-.75); \node[scale=.8] at (1,-.5) {$\sum TL$};
  \draw[Box] (0,1.25) rectangle (2,1.75); \node[scale=.8] at (1,1.5) {$\sum TL$};
  \draw[verythickline](1,0) -- (1,-.25); \draw[verythickline] (1,1) -- (1,1.25);
  \node[left] at (-.7,.5) {$Tr(x) = $};
 \end{tikzpicture}
\end{equation*}
Let
\begin{equation*}
 V = \bigoplus_{\epsilon = \pm} \bigoplus_{\substack{k_1,k_2,s,t \geq 0\\ k_1 + k_2 + s + t\text{ even}}}  V^\epsilon_{k_1,k_2}(s,t).
\end{equation*}

It is easy to see that the formulas above give $V$ the structure of a (non-unital) graded $*$-algebra with trace $Tr$.  
\begin{theorem}\label{bounded}
$Tr$ is a faithful, positive linear functional on $V$.  Moreover, $V$ acts by bounded operators on the GNS Hilbert space for $Tr$.
\end{theorem}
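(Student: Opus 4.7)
The plan is to adapt the proof of the analogous statement for $(Gr_k^\epsilon,\tau_k)$ from \cite{gjs1} (stated at the end of Section \ref{sec:background}). The construction of $V$ differs from that of $Gr_k^\epsilon$ in two respects: the left and right ``tether'' indices $k_1,k_2$ may differ, and there are strings on both the top and bottom. The first introduces a matrix-unit structure across distinct $(k,\epsilon)$; the second introduces an extra semi-finite direction. Neither, I expect, presents a new difficulty of principle.

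For positivity of $Tr$, I would fix $x\in V^\epsilon_{k_1,k_2}(s,t)$ and compute $Tr(x^\dagger\wedge x)$ by expanding the two TL-sum boxes in the definition of $Tr$ as ordinary finite sums $\sum_\pi \pi$ and $\sum_\sigma \sigma$ over Temperley-Lieb diagrams, giving
\[
Tr(x^\dagger\wedge x)=\sum_{\pi,\sigma} Z_{\pi,\sigma}(x^\dagger,x),
\]
where each $Z_{\pi,\sigma}(x^\dagger,x)$ is the planar algebra partition function of a closed tangle applied to $x^*\otimes x$. The key step is to recognize this tangle as the pairing of $x$ with a rotated and reflected copy of itself through the planar algebra inner product on some $P^{\epsilon'}_{n'}$, thereby exhibiting each $Z_{\pi,\sigma}(x^\dagger,x)$ as $\|\Phi_{\pi,\sigma}(x)\|^2$ for a linear map $\Phi_{\pi,\sigma}$ into a space with positive-definite planar inner product. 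Positivity then follows term by term, and faithfulness follows from the injectivity of the family $\{\Phi_{\pi,\sigma}\}$: one can always find a choice of $(\pi,\sigma)$ for which $\Phi_{\pi,\sigma}$ is, up to rotation, the identity map on $V^\epsilon_{k_1,k_2}(s,t)\cong P^\epsilon_{n}$.

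For boundedness, I would exploit the fact that $V$ contains natural copies of the graded algebras from \cite{gjs1}: for each $k\geq 0$ and $\epsilon=\pm$, the subspace $\bigoplus_{s\geq 0} V^\epsilon_{k,k}(s,0)$ is isomorphic as a $*$-algebra (up to an overall trace normalization coming from the closed left-right arcs) to $Gr_k^\epsilon$, and a symmetric identification holds with the top direction replaced by the bottom. The main theorem of \cite{gjs1} then provides boundedness of left multiplication for these sub-algebras on their own GNS completions. The rest of $V$ is built from these by the matrix structure indexed by pairs $(k,\epsilon)$ and by combining top and bottom strings (which occupy disjoint portions of each diagram and contribute independently). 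A standard matrix-amplification argument then extends boundedness to all of $V$ acting on $L^2(V,Tr)$.

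The main obstacle is the positivity argument: the two TL-sums couple the $s$ strings of $x^\dagger$ to those of $x$ (and similarly the $t$ strings) in ways that do not obviously yield a sum of squares. Making this rigorous will require a careful bookkeeping of pair partitions together with proper handling of shading and marked-point conventions under the $\dagger$ involution, and will make essential use of the spherical invariance of the planar algebra partition function in order to re-express each summand in the desired $\|\Phi_{\pi,\sigma}(x)\|^2$ form. Once this is done, boundedness should reduce rather directly to the result of \cite{gjs1} via the identifications above.
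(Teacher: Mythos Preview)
Your positivity argument has a genuine gap. You propose to expand $Tr(x^\dagger\wedge x)$ over pairs $(\pi,\sigma)$ of Temperley--Lieb diagrams and exhibit each summand $Z_{\pi,\sigma}(x^\dagger,x)$ as $\|\Phi_{\pi,\sigma}(x)\|^2$. This fails: an individual closed tangle with insertions $x^*$ and $x$ evaluates to $\langle x, T_{\pi,\sigma}\, x\rangle$, where $T_{\pi,\sigma}$ is the annular Temperley--Lieb element separating the two boxes. For this to be $\|\Phi(x)\|^2$ you would need $T_{\pi,\sigma}$ to be a positive operator, but a single annular TL diagram is typically not even self-adjoint (its adjoint is the reflected diagram). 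Concretely, take $x\in V^+_{0,0}(4,0)\cong P_2$ and the pairing on the eight top points that caps two strings of $x^*$ together, caps two strings of $x$ together, and joins the remaining strings across; the resulting term is an inner product of two \emph{different} partial closures of $x$ and has no reason to be nonnegative. Positivity of the Voiculescu-type trace is genuinely a statement about the full sum, not about individual terms, and spherical invariance does not rescue the term-by-term claim.

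The paper sidesteps this entirely via the orthogonalization method of \cite{jsw}: one defines on a copy $W$ of $V$ a new product $\star$ (partial contractions at top and bottom) and a new trace $Tr'$ which is nonzero only on $W^\epsilon_{k,k}(0,0)$ and there coincides with the planar-algebra inner product. A triangular change of basis $X:V\to W$ built from sums of epi/monic TL diagrams is shown to be a $*$-isomorphism intertwining $(\wedge,Tr)$ with $(\star,Tr')$. Positivity and faithfulness of $Tr'$ are then axioms of the subfactor planar algebra, and boundedness on $L^2(Tr')$ follows by the Fock-space estimates of \cite[Theorem~3.3]{jsw}. Your boundedness sketch (matrix amplification plus reduction to \cite{gjs1}) might be salvageable, but note that the top and bottom strings do \emph{not} contribute independently to the GNS inner product, since both TL sums see the same element; the orthogonalization trick is what decouples them.
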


The (semi-finite) von Neumann algebra obtained by completing $V$ under the GNS representation for $Tr$ will be denoted $\mathfrak M$.  To prove Theorem \ref{bounded} we use the orthogonalization method of \cite{jsw}.

\subsection{An orthogonal basis}\label{sec:orthogonal}

We will now define a new multiplication and trace on $V$.  To distinguish between these two structures, we define $W^\epsilon_{k_1,k_2}(s,t)$ to be a copy of $V_{k_1,k_2}^{\epsilon}(s,t)$, and let $W$ be the same direct sum defining $V$.  

Define a product $\star$ on $W^\epsilon_{k_1,k_2}(s,t) \times W^{\epsilon'}_{k_1',k_2'}(s',t')$ to be zero unless $k_2 = k_1'$ and $\epsilon' = (-1)^s\cdot \epsilon$, and otherwise by
\begin{equation*}
 \begin{tikzpicture}[thick,scale=.6]
\draw[Box](0,0) rectangle (2,1);
  \draw[verythickline](0,.5) -- (-.5,.5); \draw[verythickline](2,.5) -- (2.5,.5);
 \draw[verythickline](.5,0) -- (.5,-1); \draw[verythickline](.5,1) -- (.5,2);
 \node at (1,.5) {$x$};
\begin{scope}[xshift=2.9cm]
\draw[Box](0,0) rectangle (2,1);
  \draw[verythickline](0,.5) -- (-.5,.5); \draw[verythickline](2,.5) -- (2.5,.5);
 \draw[verythickline](1.5,0) -- (1.5,-1); \draw[verythickline](1.5,1) -- (1.5,2);
 \node at (1,.5) {$y$};
\end{scope}
\draw[verythickline] (1.5,1) .. controls ++(0,.3) and ++(-.5,0) .. (2.5,1.6) .. controls ++(.5,0) and ++(0,.3) .. (3.5,1);
\node[circle,draw,thin,fill=white,inner sep=2pt,scale=.8] at (2.5,1.6) {$i$};
\draw[verythickline] (1.5,0) .. controls ++(0,-.3) and ++(-.5,0) .. (2.5,-.6) .. controls ++(.5,0) and ++(0,-.3) .. (3.5,0);
\node[circle,draw,thin,inner sep = 2pt, fill=white,scale=.8] at (2.5,-.6) {$j$};

 \node at (-.5,.2) [left=2mm] {$x \star y = \displaystyle\sum_{\substack{1 \leq i \leq \min(s,s')\\ 1 \leq j \leq \min(t,t')}}$};
\end{tikzpicture}
\end{equation*}
where $i$ and $j$ mean that there are $i$ parallel strings at top and $j$ parallel strings at bottom.  The numbers of the other parallel strings are then determined by our conventions.

Define a trace $Tr':W^\epsilon_{k_1,k_2}(s,t)$ to be zero unless $k_1 = k_2$ and $s=t=0$, and otherwise by
\begin{equation*}
\begin{tikzpicture}[scale=.75]
  \draw[Box] (0,0) rectangle (2,1); \node at (1,.5) {$x$};
  \draw[thickline] (0,.5) arc (90:270:.25cm and .375cm) -- (2,-.25) arc(-90:90:.25cm and .375cm);
  \node[left] at (-.45,.3) {$Tr'(x) = $};
 \end{tikzpicture}
\end{equation*}
  
Observe that there is a natural action of $TL \otimes TL^{op}$ on $V$ as follows: if $a, b$ are TL diagrams with $s'$ (resp. $t$) points on top and $s$ (resp. $t'$) points on bottom, then $a \otimes b$ acts as the sum over $k_1,k_2$ and $\epsilon$ of the linear maps $V_{k_1,k_2}^\epsilon(s,t) \to V_{k_1,k_2}^\epsilon(s',t')$ defined by 
\begin{equation*}
\begin{tikzpicture}[scale=.75]
  \draw[Box] (0,0) rectangle (2,.5); \node at (1,.25) {$x$};
  \draw[Box] (0,-.25) rectangle (2,-.75); \node[scale=.9] at (1,-.5) {$b$};
  \draw[Box] (0,.75) rectangle (2,1.25); \node[scale=.9] at (1,1) {$a$};
  \draw[verythickline](1,0) -- (1,-.25); \draw[verythickline](1,1.25) -- (1,1.5); \draw[verythickline] (1,.5) -- (1,.75);
  \draw[verythickline] (1,-.75) -- (1,-1); \node[left=2mm] at (0,.25) {$x \mapsto$};
 \end{tikzpicture}
\end{equation*}

Recall that a TL diagram is called \textit{epi} if each point on the top is connected to a point on the bottom, and \textit{monic} if each point on the bottom is connected to a point on top.  Furthermore, an epi (resp. monic) TL diagram is said to be \textit{nonnested} if each ``turn-back'' on the bottom (resp. top) of the diagram encloses no other turn-backs.

Define $X:V \to W$ to be the sum of all $a \otimes b$ where $a$ (resp. $b$) are epi (resp. monic) TL diagrams.  Likewise, define $Y:W \to V$ to be the sum of all $a \otimes b$ where $a$ (resp. $b$) are nonnested epi (resp. monic) TL diagrams, and the coefficient on $a \otimes b$ is $(-1)^{s+t-s'-t'}$ when $a,b$ are as above.

\begin{proposition}\label{cob}\hfill
\begin{enumerate}
 \item $XY = 1 = YX$.
 \item $X(a \wedge b) = X(a) \star X(b)$.
 \item $X(a^*) = X(a)^*$.
\item $Tr'(X(a)) = Tr(a)$.
\end{enumerate}
\end{proposition}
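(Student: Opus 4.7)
My plan hinges on the natural grading of $V$ and $W$ by $N = s+t$. Both $X$ and $Y$ preserve the tuple $(k_1, k_2, \epsilon)$ and send $V^\epsilon_{k_1,k_2}(s,t)$ into $\bigoplus_{s' \leq s,\ t' \leq t} W^\epsilon_{k_1,k_2}(s',t')$ (and analogously for $Y$), since an epi (resp.\ monic) TL diagram necessarily has $s' \leq s$ (resp.\ $t' \leq t$). On the diagonal $(s', t') = (s, t)$, the only contributing TL diagrams on top and bottom are the identity strings, with coefficient $1$ in both $X$ and $Y$. Consequently, on each $V^\epsilon_{k_1,k_2}(s,t)$-block $X$ and $Y$ are triangular with the identity on the diagonal, hence invertible; in particular, verifying $XY = 1$ automatically gives $YX = 1$.

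For (1), I expand $XY$ applied to $w \in W^\epsilon_{k_1,k_2}(s,t)$ as a sum over quadruples $(a_1, b_1, a_2, b_2)$ where $a_1$ (resp.\ $b_1$) is a nonnested epi (resp.\ monic) diagram from $Y$, and $a_2, b_2$ are arbitrary epi/monic diagrams from $X$. Grouping terms by the vertical composites $A = a_2 \circ a_1$ and $B = b_1 \circ b_2$, the identity $XY = 1$ reduces to the combinatorial assertion that for any non-identity epi TL diagram $A$, the signed sum over nonnested-epi factorizations $A = a_2 \circ a_1$ vanishes. The key bijection is between such factorizations and subsets of the set $\mc T(A)$ of outermost (nonnested) turn-backs on the bottom of $A$: one ``peels off'' a chosen subset of outer turn-backs to build $a_1$. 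With the signs built into the definition of $Y$, the sum becomes an inclusion-exclusion $\sum_{S \subseteq \mc T(A)} (-1)^{|S|}$, which vanishes whenever $\mc T(A) \neq \emptyset$, and this set is empty precisely when $A$ is the identity. A parallel argument handles the monic bottom factor. This is the step I expect to be the main obstacle: making the bijection with outer turn-backs precise and checking that the signs really collapse.

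For (2), I analyze $X(x \wedge y)$ by decomposing the epi diagram $A$ stacked on top of $x \wedge y$ into three local pieces: through-strings ending in $x$'s top region (an epi $a_x$), through-strings ending in $y$'s top region (an epi $a_y$), and bottom cups of $A$ spanning between the $x$- and $y$-sides. There are, say, $i$ such cross-cups at the top and, after the analogous decomposition of the monic $B$ on the bottom, $j$ at the bottom; these cross-cups are exactly the parallel ``through'' strings labeled $i$ and $j$ in the definition of $\star$, while the independent local choices of $(a_x, b_x)$ and $(a_y, b_y)$ reproduce one term of $X(x)$ and one term of $X(y)$ respectively. Summing over $i, j$ and all local choices yields the formula for $X(x) \star X(y)$. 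Part (3) is then immediate: the operation $\dagger$ corresponds to the reflection of the diagram (up to the marked-point conventions), which interchanges ``epi on top'' with ``monic on bottom'' while fixing the set of (epi, monic) pairs summed over in $X$; hence $X$ commutes with $\dagger$.

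For (4), $Tr'$ annihilates every $W^\epsilon_{k_1,k_2}(s',t')$ except when $s' = t' = 0$, so only the terms of $X(x)$ in which the top diagram $a$ has no top points and the bottom diagram $b$ has no bottom points contribute to $Tr'(X(x))$. Such an $a$ is (vacuously) epi and ranges over all TL diagrams pairing up the $s$ top boundary points of $x$, and dually $b$ ranges over all TL pairings of the $t$ bottom points --- exactly the $\sum TL$'s appearing in the definition of $Tr$ in Section \ref{sec:semi-finite}. Applying $Tr'$ then closes the left and right $k$-strand bundles, recovering the full formula for $Tr(x)$ verbatim. Aside from the sign-cancellation lemma in (1), all of (2)--(4) amount to careful bookkeeping within the combinatorial framework introduced in this section.
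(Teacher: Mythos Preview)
The paper offers no proof of its own here, deferring entirely to \cite[Section~5]{jsw}; your sketch is exactly that argument, so in substance you and the paper agree.  Parts (2)--(4) are correct as you describe them: the decomposition of an epi $A$ over $x\wedge y$ into $(a_x,a_y,\text{$i$ cross-caps})$ by cutting along the $x/y$ seam is precisely how the $\star$-product arises, and (3)--(4) are the bookkeeping you indicate.

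There is one slip in your treatment of (1).  For $XY$ the bijection is with subsets of the \emph{innermost} bottom turn-backs of $A$ (those connecting adjacent points), not the outermost ones.  The nonnested piece $a_1$ is applied first and hence sits adjacent to $w$; if $a_1$ carries a cap $(p,q)$, the intermediate bottom points $p{+}1,\dots,q{-}1$ can be neither $a_1$-caps (nonnestedness) nor through-strings (they would have to cross $(p,q)$), forcing $q=p{+}1$.  Conversely every subset $S$ of innermost caps of $A$ yields a valid $a_1$ (caps $S$, through-strings elsewhere), with $a_2$ the diagram obtained from $A$ by contracting $S$.  Your ``outermost'' description is what one finds when analysing $YX$ directly: there the nonnested factor sits on top, and its caps do become outermost in $A$.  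Either route gives the same inclusion-exclusion, since any non-identity epi has at least one innermost cap and at least one outermost cap.

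One further remark: for the signs to collapse to $\sum_S(-1)^{|S|}$ the coefficient in $Y$ must be $(-1)$ raised to the number of turn-backs, i.e.\ exponent $(s+t-s'-t')/2$; the exponent $s+t-s'-t'$ as printed in the paper is always even.
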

\begin{proof}
The proof is identical to the argument in \cite[Section 5]{jsw}, and will therefore be omitted.  
\end{proof}

\begin{corollary}
$(W,\star,*)$ is an associative $*$-algebra.
\end{corollary}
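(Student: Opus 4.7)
The plan is to observe that this is a transport-of-structure argument: we already know from the text just before Theorem \ref{bounded} that $(V,\wedge,*)$ is a (non-unital) associative $*$-algebra, and Proposition \ref{cob} gives a bijection $X:V\to W$ with inverse $Y$ intertwining the two products and commuting with the involution. Hence every identity satisfied by $(V,\wedge,*)$ automatically transfers to $(W,\star,*)$.

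Concretely, I would first verify associativity of $\star$. Given $a,b,c \in W$, set $a' = Y(a)$, $b' = Y(b)$, $c' = Y(c)$. By Proposition \ref{cob}(1) we have $a = X(a')$ etc., so by Proposition \ref{cob}(2) applied twice,
\begin{equation*}
(a \star b) \star c = X(a' \wedge b') \star X(c') = X\bigl((a' \wedge b') \wedge c'\bigr),
\end{equation*}
and similarly $a \star (b \star c) = X(a' \wedge (b' \wedge c'))$. Since $\wedge$ is associative on $V$, the two sides agree. The same pattern handles the $*$-algebra axioms $(a \star b)^* = b^* \star a^*$ and $(a^*)^* = a$: rewrite in terms of $Y$, apply Proposition \ref{cob}(2) and (3), and use the corresponding identities in $(V,\wedge,*)$.

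There is no real obstacle; the entire content of the corollary is packaged into Proposition \ref{cob}, whose proof was deferred to \cite{jsw}. The only thing worth being careful about is the grading bookkeeping: one must check that the domain/codomain matching conditions in the definitions of $\wedge$ and $\star$ (namely the constraints $k_2 = k_1'$ and $\epsilon' = (-1)^s \epsilon$) are preserved under $X$ and $Y$, but this is automatic because $X$ and $Y$ act as sums of $TL \otimes TL^{op}$ elements which do not alter the side indices $k_1, k_2$ or the shading parities, only the top/bottom string counts $s,t$.
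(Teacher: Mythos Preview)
Your proposal is correct and takes essentially the same approach as the paper: the paper's proof is the one-line observation that associativity of $(V,\wedge,*)$ transfers to $(W,\star,*)$ via the isomorphism of Proposition \ref{cob}, and you have simply spelled out this transport-of-structure argument in detail.
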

\begin{proof}
Since $(V,\wedge,*)$ is clearly associative, this follows from the isomorphism in Proposition \ref{cob}.
\end{proof}

We are now prepared to prove Theorem \ref{bounded}.

\begin{proof}[Proof of Theorem \ref{bounded}]
By Proposition \ref{cob}, to prove the theorem it suffices to show that $Tr'$ is a faithful, positive state on $W$ and that $W$ acts by bounded operators on the GNS Hilbert space for $Tr'$.  That $Tr'$ is positive and faithful is one of the defining axioms of the subfactor planar algebra $\mc P$.  The fact that $W$ acts by bounded operators on the associated GNS Hilbert space may be proved along the same lines as \cite[Theorem 3.3]{jsw}.
\end{proof}

\section{The symmetric enveloping algebra}\label{sec:enveloping}

For $k \geq 0$ and $\epsilon = \pm$, let $p_{k,\epsilon} \in V_{2k,2k}^\epsilon(0,0)$ be the diagram consisting of $2k$ parallel strings, which is a projection in $V$.  Define $Gr_k^\epsilon(\mc P) \boxtimes Gr_k^\epsilon(\mc P)$ to be the compression $p_{k,\epsilon}Vp_{k,\epsilon}$, i.e.
\begin{equation*}
Gr_k^\epsilon(\mc P) \boxtimes Gr_k^\epsilon(\mc P) = \bigoplus_{k,s,t \geq 0} V_{2k,2k}^\epsilon(2s,2t).
\end{equation*}
Define $\tau_k \boxtimes \tau_k$ on $Gr_k^\epsilon(\mc P) \boxtimes Gr_k^\epsilon(\mc P)$ to be $\delta^{-2k}$ times the restriction of $Tr$.  

It follows from Theorem \ref{bounded} that $\tau_k \boxtimes \tau_k$ is a faithful, tracial state on $Gr_k^\epsilon(\mc P) \boxtimes Gr_k^\epsilon(\mc P)$.  Let $M_k^\epsilon \boxtimes M_k^\epsilon$ denote its GNS completion, which is naturally identified with the compression $p_{k,\epsilon}\mathfrak M p_{k,\epsilon}$.

As an example, consider the element $|\;| \in V_{0,0}^+(2,2) \subset M_0 \boxtimes M_0$ which consists of 2 vertical lines.  We will show that the moments of $|\;|$ are related to the famous meander problem (see e.g. \cite{dgg}).  Recall that a (generalized) \textit{meander} of order $n$ is a planar configuration of closed, non-intersecting loops crossing an infinite oriented line through $2n$ points.  Let $M_n^{(k)}$ denote the number of meanders of order $n$ with $k$ connected components, and define the \textit{meander polynomials} $m_n(q)$ by
\begin{equation*}
 m_n(q) = \sum_{k = 0}^n M_n^{(k)}q^k.
\end{equation*}
\begin{proposition}
If $\mc P$ is a subfactor planar algebra of modulus $\delta$, then
\begin{equation*}
 \tau_0 \boxtimes \tau_0\bigl((|\;|)^n\bigr) = m_n(\delta).
\end{equation*}

\end{proposition}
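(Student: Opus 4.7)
The strategy is a direct computation: unpack both the $n$-fold product $(|\;|)^n$ and the trace $Tr$ using the definitions from Section~\ref{sec:semi-finite}, and then translate the resulting diagrammatic sum into meander counting.

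First I would observe that at $k = 0$ the normalization gives $\tau_0 \boxtimes \tau_0 = \delta^{0}\cdot Tr = Tr$, so the statement to prove is $Tr\bigl((|\;|)^n\bigr) = m_n(\delta)$. Next, I would check that the $n$-fold $\wedge$-product of $|\;| \in V_{0,0}^+(2,2)$ lies in $V_{0,0}^+(2n,2n)$: the compatibility conditions $k_2 = k_1'$ and $\epsilon' = (-1)^{s}\epsilon$ are automatic because $k_1 = k_2 = 0$ and $s = 2$ at every stage, and the diagrammatic product simply places copies of $|\;|$ side by side. The underlying planar-algebra element is therefore $2n$ parallel vertical strings, i.e., the identity of $TL_{2n}$.

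I would then apply the definition of $Tr$ to this element. The ``side closure'' is trivial since $k_1 = k_2 = 0$, and we are left with the $2n$ vertical strings capped above by $\sum TL$ and below by $\sum TL$. Expanding the sums, this yields
\[
Tr\bigl((|\;|)^n\bigr) = \sum_{a,b}\,(\text{value of the closed planar diagram}),
\]
where $a$ and $b$ range independently over loopless Temperley-Lieb diagrams on $2n$ points (equivalently, non-crossing perfect matchings of $2n$ points), placed above and below the $2n$ vertical strings. Because the middle strings merely identify the boundary points of the two caps, each such closed configuration is a disjoint union of $L(a,b)$ closed loops in the plane and evaluates in $\mc P$ to $\delta^{L(a,b)}$ by the modulus axiom.

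The last step is the bijective interpretation. Regarding the $2n$ points where the vertical strings meet the top cap as the $2n$ crossing points of an oriented line, the non-crossing matching $a$ becomes the arc system above the line and $b$ the arc system below; the resulting picture is precisely a generalized meander of order $n$, with $L(a,b)$ equal to its number of connected components. Grouping the sum by $k = L(a,b)$ gives
\[
Tr\bigl((|\;|)^n\bigr) = \sum_{k\geq 0} M_n^{(k)}\,\delta^{k} = m_n(\delta),
\]
which is the claimed identity. There is no serious obstacle here; the proof is really just a careful translation of the diagrammatic conventions, and the only point deserving explicit verification is that the middle block of vertical strings contributes nothing beyond this identification, so that the closure is genuinely a meander and not a more complicated planar object.
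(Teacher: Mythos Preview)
Your proof is correct and follows essentially the same route as the paper: compute $(|\;|)^n$ as $2n$ vertical strings in $V_{0,0}^+(2n,2n)$, apply the definition of $Tr$ (noting $\tau_0\boxtimes\tau_0=\delta^{0}Tr$) to obtain the double sum over Temperley--Lieb caps, and identify the result with $m_n(\delta)$. The only difference is presentational: the paper draws the resulting diagram and then cites \cite{dgg} for the identification with the meander polynomial, whereas you spell out that identification explicitly via the bijection between pairs of non-crossing matchings and generalized meanders.
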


\begin{proof}
We have
\begin{equation*}
 \begin{tikzpicture}
  \draw[Box] (0,-.25) rectangle (2,-.75); \node[scale=.8] at (1,-.5) {$\sum TL$};
  \draw[Box] (0,.25) rectangle (2,.75); \node[scale=.8] at (1,.5) {$\sum TL$};
  \draw[verythickline](1,-.25) -- node[rcount,scale=.7] {$2n$} (1,.25);
  \node[left] at (-.4,0) {$\tau_0 \boxtimes \tau_0\bigl((|\;|)^n\bigr) = $};
 \end{tikzpicture}
\end{equation*}
As explained in \cite[Section 3.3]{dgg}, this is precisely $m_n(\delta)$.
\end{proof}

We will now show that $M_k \boxtimes M_k$ is naturally identified with Popa's symmetric enveloping algebra $M_k \boxtimes_{e_{k-1}} M_k^{op}$.  First note that there is a natural anti-automorphism $y \mapsto y^{op}$ of $Gr_k^\epsilon(\mc P) \boxtimes Gr_k^\epsilon(\mc P)$, determined by
\begin{equation*}
\begin{tikzpicture}[thick,scale=.75]
\draw[Box](0,0) rectangle (2,1);
  \draw[verythickline](0,.5) -- (-.5,.5); \draw[verythickline](2,.5) -- (2.5,.5);
 \draw[verythickline](1,0) -- (1,-.25); \draw[verythickline](1,1) -- (1,1.25);
 \node at (1,.5) {$y^{op}$}; \node[marked] at (-.1,1.05) {};
\node at (3,.5) {$=$};
\begin{scope}[xshift=4cm]
\draw[Box](0,0) rectangle (2,1);
  \draw[verythickline](0,.5) -- (-.5,.5); \draw[verythickline](2,.5) -- (2.5,.5);
 \draw[verythickline](1,0) -- (1,-.25); \draw[verythickline](1,1) -- (1,1.25);
 \draw[Box] (-.5,-.25) rectangle (2.5,1.25); \node[marked] at (-.6,1.35) {};
 \node[flip] at (1,.5) {$y$}; \node[marked] at (2.1,-.05) {};
\end{scope}
\end{tikzpicture} 
\end{equation*}
Since $y \mapsto y^{op}$ preserves the trace $\tau_k \boxtimes \tau_k$, this extends to an anti-automorphism of $M_k^{\epsilon} \boxtimes M_k^{\epsilon}$. 

Observe also that $Gr_k^\epsilon(\mc P) \boxtimes Gr_k^\epsilon(\mc P)$ contains a copy of $Gr_k^\epsilon(\mc P) \otimes Gr_k^\epsilon(\mc P)^{op}$ as follows:
\begin{equation*}
 \begin{tikzpicture}[scale=.75]
 \draw[Box] (0,0) rectangle (2,1); \node at (1,.5) {$x$};
 \draw[verythickline] (-.5,.5) -- (0,.5); \draw[verythickline] (2,.5) -- (2.5,.5);
 \draw[verythickline] (1,1) -- (1,1.25); \node[marked] at (-.1,1.05) {};
\begin{scope}[rotate around={180:(1,.5)}, yshift=1.5cm]
 \draw[Box] (0,0) rectangle (2,1); \node at (1,.5) {$y$}; \node[marked] at (-.1,1.05) {};
 \draw[verythickline] (-.5,.5) -- (0,.5); \draw[verythickline] (2,.5) -- (2.5,.5);
 \draw[verythickline] (1,1) -- (1,1.25);
\end{scope}
\draw[Box] (-.5,-1.75) rectangle (2.5,1.25); \node[marked] at (-.6,1.35) {};
\node[left=2mm,scale=1.1] at (-.5,-.25) {$x \otimes y^{op}  \mapsto$};
 \end{tikzpicture}
\end{equation*}
Since $\tau_k \boxtimes \tau_k$ restricts to the Voiculescu trace on $Gr_k^\epsilon(\mc P) \otimes Gr_k^\epsilon(\mc P)^{op}$, this inclusion extends to $M_k^\epsilon \otimes (M_k^\epsilon)^{op} \subset M_k^\epsilon \boxtimes M_k^\epsilon$.  Finally, if $k \geq 1$ and $\mathbf{f}_{k,\epsilon} \in M_k^\epsilon \boxtimes M_k^\epsilon$ is the projection
\begin{equation*}
 \begin{tikzpicture}[xscale=.5,yscale=.75]
  \draw[Box] (0,0) rectangle (2,1.5); \draw[thick] (0,.9) arc (90:-90:.3cm and .15cm);
  \draw[thick] (2,.9) arc(90:270:.3cm and .15cm); \draw[thickline] (0,1.2) -- node[scale=.7,count,rectangle, rounded corners] {$k-1$} (2,1.2);
  \draw[thickline] (0,.3) -- (2,.3);
  \node[left =1mm] at (0,.75) {$\mathbf f_{k,\epsilon} = \delta^{-1} \cdot$};
 \end{tikzpicture}
\end{equation*}
then $\mathbf{f}_{k,+}$ implements the Jones projections for both $M_{k-1} \subset M_k$ and $M_{k-1}^{op} \subset M_k^{op}$.  It now follows from \cite[Theorem 1.2]{pop1} and the discussion above that $W^*(M_k \otimes M_k^{op},\mathbf{f}_k)$ is isomorphic to $M_k \boxtimes_{e_{k-1}} M_k^{op}$, in particular it is a factor.  We will now show that $M_k \boxtimes M_k = W^*(M_k \otimes M_k^{op},\mathbf{f}_k)$.  First we need a lemma.

Let $k \geq 0$ and $\epsilon = \pm$, and define $c_{k,\epsilon} \in V_{2k,2k+2}^\epsilon(1,1)$ by
\begin{equation*}
 \begin{tikzpicture}[yscale=.5,xscale=.75]
 \draw[Box] (0,0) rectangle (1,2); \draw[verythickline] (0,1) -- node[count,rectangle,rounded corners,scale=.75] {$2k$} (1,1);
\draw[thick] (.5,2) arc (180:270:.5cm); \draw[thick] (.5,0) arc (180:90:.5cm);
\node[left=1mm] at (0,1) {$c_{k,\epsilon} =$};
 \end{tikzpicture}
\end{equation*}

It follows from \cite[Lemma 2]{gjs1} that $c_{k,\epsilon}c_{k,\epsilon}^*$ is invertible in $M_k^\epsilon \boxtimes M_k^\epsilon$.  Let $q_{k,\epsilon}$ be the initial projection of $c_{k,\epsilon}^*c_{k,\epsilon}$, 
\begin{equation*}
q_{k,\epsilon} = c_{k,\epsilon}^*(c_{k,\epsilon}c_{k,\epsilon}^*)^{-1}c_{k,\epsilon}.
\end{equation*}

\begin{lemma}\label{compress}
Fix $k \geq 0$ and $\epsilon = \pm$.
\begin{enumerate}
\item The map 
\begin{equation*}
 \theta_{k,\epsilon}(x) = c_{k,\epsilon}^*(c_{k,\epsilon}c_{k,\epsilon}^*)^{-1/2}x(c_{k,\epsilon}c_{k,\epsilon}^*)^{-1/2}c_{k,\epsilon}
\end{equation*}
 is an isomorphism of $M_k^{\epsilon} \boxtimes M_k^{\epsilon}$ onto $q_{k,\epsilon}(M_{k+1}^{-\epsilon} \boxtimes M_{k+1}^{-\epsilon})q_{k,\epsilon}$.
\item If $k \geq 1$, then $\theta_{k,\epsilon}$ restricts to an isomorphism
\begin{equation*}
W^*(M_k^{\epsilon} \otimes (M_k^{\epsilon})^{op},\mathbf{f}_{k,\epsilon}) \simeq q_{k,\epsilon}(W^*(M_{k+1}^{-\epsilon} \otimes  (M_{k+1}^{-\epsilon})^{op},\mathbf{f}_{k+1,-\epsilon}))q_{k,\epsilon}.
\end{equation*}
\end{enumerate}
\end{lemma}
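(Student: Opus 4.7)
The plan is to realize $\theta_{k,\epsilon}$ as conjugation by a partial isometry obtained by polar decomposition from $c_{k,\epsilon}$, and then to verify diagrammatically that this conjugation carries the level-$k$ symmetric enveloping subalgebra onto the $q_{k,\epsilon}$-compression of the level-$(k{+}1)$ one.

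For Part (1), set $v = (c_{k,\epsilon}c_{k,\epsilon}^*)^{-1/2}c_{k,\epsilon}$. The invertibility of $c_{k,\epsilon}c_{k,\epsilon}^*$ in $M_k^\epsilon \boxtimes M_k^\epsilon$, recorded just before the lemma (by \cite[Lemma 2]{gjs1}), makes this well-defined, and a direct computation shows $vv^* = p_{k,\epsilon}$ while $v^*v = q_{k,\epsilon}$. Thus $v$ is a partial isometry in $\mathfrak M$ whose support projections are $p_{k,\epsilon}$ and $q_{k,\epsilon}$, and $\theta_{k,\epsilon}(x) = v^*xv$ is the standard corner isomorphism from $M_k^\epsilon \boxtimes M_k^\epsilon = p_{k,\epsilon}\mathfrak M p_{k,\epsilon}$ onto $q_{k,\epsilon}(p_{k+1,-\epsilon}\mathfrak M p_{k+1,-\epsilon})q_{k,\epsilon} = q_{k,\epsilon}(M_{k+1}^{-\epsilon} \boxtimes M_{k+1}^{-\epsilon})q_{k,\epsilon}$.

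For the forward inclusion in Part (2), I would compute $\theta_{k,\epsilon}(a \otimes b^{op})$ for $a, b \in Gr_k^\epsilon(\mc P)$ by stacking the diagrams of $c_{k,\epsilon}^*$, the image of $a \otimes b^{op}$ in $V$, and $c_{k,\epsilon}$: the arcs in $c_{k,\epsilon}^*$ and $c_{k,\epsilon}$ should implement the canonical inclusion $M_k^\epsilon \hookrightarrow M_{k+1}^{-\epsilon}$ on both the $a$ and $b$ pieces, placing the result in $M_{k+1}^{-\epsilon} \otimes (M_{k+1}^{-\epsilon})^{op}$ once the normalization $(c_{k,\epsilon}c_{k,\epsilon}^*)^{-1/2}$ (which already lies in $M_k^\epsilon \boxtimes M_k^\epsilon$) is absorbed. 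A parallel diagrammatic computation should express $\theta_{k,\epsilon}(\mathbf{f}_{k,\epsilon})$ as a word in $\mathbf{f}_{k+1,-\epsilon}$ and elementary tensors, obtained by tracking how the cup-cap structure of the Jones projection interacts with the arcs. Together these give the inclusion $\theta_{k,\epsilon}(W^*(M_k^\epsilon \otimes (M_k^\epsilon)^{op}, \mathbf{f}_{k,\epsilon})) \subseteq q_{k,\epsilon}W^*(M_{k+1}^{-\epsilon} \otimes (M_{k+1}^{-\epsilon})^{op}, \mathbf{f}_{k+1,-\epsilon})q_{k,\epsilon}$.

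The reverse inclusion is the main obstacle. It amounts to showing that the generators $q_{k,\epsilon}(\alpha \otimes \beta^{op})q_{k,\epsilon}$ with $\alpha, \beta \in M_{k+1}^{-\epsilon}$ and $q_{k,\epsilon}\mathbf{f}_{k+1,-\epsilon}q_{k,\epsilon}$ of the right-hand compression all lie in the image of $\theta_{k,\epsilon}$. Since $\theta_{k,\epsilon}^{-1}(y) = vyv^* = (c_{k,\epsilon}c_{k,\epsilon}^*)^{-1/2}\, c_{k,\epsilon}\, y\, c_{k,\epsilon}^*\, (c_{k,\epsilon}c_{k,\epsilon}^*)^{-1/2}$, this reduces to identifying $c_{k,\epsilon}\, y\, c_{k,\epsilon}^*$ for each such generator as a finite combination of elementary tensors in $M_k^\epsilon \otimes (M_k^\epsilon)^{op}$ and $\mathbf{f}_{k,\epsilon}$ in the compression $p_{k,\epsilon}\mathfrak M p_{k,\epsilon}$; the normalization factor does not leave the level-$k$ subalgebra $M_k^\epsilon \boxtimes M_k^\epsilon$, so it causes no trouble. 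A cleaner organizational route is to establish both inclusions at once for the unnormalized maps $x \mapsto c_{k,\epsilon}^* x c_{k,\epsilon}$ and $y \mapsto c_{k,\epsilon} y c_{k,\epsilon}^*$ on dense subalgebras of diagrams---where the cup-cap moves and the Markov property of $\mathbf{f}_{k,\epsilon}$, $\mathbf{f}_{k+1,-\epsilon}$ collapse the sandwiched arcs explicitly---and then invoke the isomorphism from Part (1) together with the fact that $(c_{k,\epsilon}c_{k,\epsilon}^*)^{-1/2} \in M_k^\epsilon \boxtimes M_k^\epsilon$ to pass to the normalized $\theta_{k,\epsilon}$.
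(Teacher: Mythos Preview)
Your treatment of Part~(1) and of the forward inclusion in Part~(2) is fine and matches the paper's; the polar-decomposition partial isometry $v$ does the job, and the diagrammatic check that $\theta_{k,\epsilon}$ carries $M_k^\epsilon \otimes (M_k^\epsilon)^{op}$ into $q_{k,\epsilon}(M_{k+1}^{-\epsilon}\otimes(M_{k+1}^{-\epsilon})^{op})q_{k,\epsilon}$ and $\mathbf f_{k,\epsilon}$ to $q_{k,\epsilon}\mathbf f_{k+1,-\epsilon}q_{k,\epsilon}$ is exactly what the paper asserts as ``clear''.

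The reverse inclusion, however, has a real gap. You describe the elements $q_{k,\epsilon}(\alpha\otimes\beta^{op})q_{k,\epsilon}$ and $q_{k,\epsilon}\mathbf f_{k+1,-\epsilon}q_{k,\epsilon}$ as ``the generators of the right-hand compression'' and then propose to show each lies in the image of $\theta_{k,\epsilon}$. But for a von Neumann algebra $A=W^*(S)$ and a projection $q\in A$, it is \emph{not} automatic that $qAq$ is generated by $qSq$; this is precisely the nontrivial content here. Your ``cleaner organizational route'' of sandwiching individual generators between $c_{k,\epsilon}$ and $c_{k,\epsilon}^*$ runs into the same problem: a word $m_1\mathbf f_{k+1,-\epsilon}m_2\cdots\mathbf f_{k+1,-\epsilon}m_l$ compressed by $q_{k,\epsilon}$ is not a product of compressed generators, and no amount of cup-cap manipulation on a single factor fixes this.

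The paper supplies the missing idea: $q_{k,\epsilon}$ lies in the II$_1$ factor $M_1^{-\epsilon}\otimes (M_1^{-\epsilon})^{op}$, which \emph{commutes} with $\mathbf f_{k+1,-\epsilon}$. Choose partial isometries $v_1,\dots,v_n$ in that factor with $v_i^*v_i\le q_{k,\epsilon}$ and $\sum_i v_iq_{k,\epsilon}v_i^* = 1$. Inserting $1=\sum_i v_iq_{k,\epsilon}v_i^*$ between consecutive factors of $q_{k,\epsilon}m_1\mathbf f_{k+1,-\epsilon}\cdots m_lq_{k,\epsilon}$ and sliding each $v_i$ past $\mathbf f_{k+1,-\epsilon}$ rewrites the whole expression as a finite sum of alternating products of elements of $q_{k,\epsilon}(M_{k+1}^{-\epsilon}\otimes(M_{k+1}^{-\epsilon})^{op})q_{k,\epsilon}$ and $q_{k,\epsilon}\mathbf f_{k+1,-\epsilon}q_{k,\epsilon}$. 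That is the step your argument is missing.
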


\begin{proof}
(1) is straightforward, the inverse of $\theta_{k,\epsilon}$ is given by
\begin{equation*}
y \mapsto (c_{k,\epsilon}c_{k,\epsilon}^*)^{-1/2}c_{k,\epsilon}yc_{k,\epsilon}^*(c_{k,\epsilon}c_{k,\epsilon}^*)^{-1/2}.
\end{equation*}

For (2), since it is clear that $\theta_{k,\epsilon}$ takes $M_{k}^\epsilon \otimes (M_k^{\epsilon})^{op}$ onto $q_{k,\epsilon}(M_{k+1}^{-\epsilon} \otimes (M_{k+1}^{-\epsilon})^{op})q_{k,\epsilon}$ and $\mathbf f_{k,\epsilon}$ to $q_{k,\epsilon}\mathbf f_{k+1,-\epsilon}q_{k,\epsilon}$, it suffices to show that $q_{k,\epsilon}W^*(M_{k+1}^{-\epsilon} \otimes (M_{k+1}^{-\epsilon})^{op},\mathbf f_{k+1,-\epsilon})q_{k,\epsilon}$ is generated by $q_k(M_{k+1}^{-\epsilon} \otimes (M_{k+1}^{-\epsilon})^{op})q_{k,\epsilon}$ and $q_{k,\epsilon}\mathbf{f}_{k+1,-\epsilon}q_{k,\epsilon}$.
Note that $q_{k,\epsilon}$ is contained in the $II_1$ factor $M_1^\epsilon \otimes (M_1^\epsilon)^{op}$, so there are partial isometries $v_1,\dotsc,v_n \in M_1^\epsilon \otimes (M_1^\epsilon)^{op}$ such that $v_i^*v_i \leq q_{k,\epsilon}$ and $\sum_i v_i^*q_{k,\epsilon}v_i = 1$.  Let $m_1,\dotsc,m_l \in M_{k+1}^\epsilon \otimes (M_{k+1}^{-\epsilon})^{op}$, then
\begin{multline*}
 q_{k,\epsilon}(m_1\mathbf f_{k+1,-\epsilon}m_2\dotsb \mathbf f_{k+1,-\epsilon} m_l)q_{k,\epsilon} \\
= q_{k,\epsilon}m_1\bigl(\sum_{i_1=1}^n v_{i_1}^*q_{k,\epsilon}v_{i_1}\bigr)\mathbf f_{k+1,-\epsilon}m_2\bigl(\sum_{i_2=1}^n v_{i_2}^*q_{k,\epsilon}v_{i_2}\bigr)\dotsb m_{l-1}\bigl(\sum_{i_{l-1}=1}^n v_{i_{l-1}}^*q_{k,\epsilon}v_{i_{l-1}}\bigr)\mathbf f_{k+1,-\epsilon} m_l q_{k,\epsilon}\\
= \sum_{1 \leq i_1,\dotsc,i_{l-1} \leq n} q_{k,\epsilon}(m_1v_{i_1}^*)q_{k,\epsilon}\mathbf f_{k+1,-\epsilon} q_{k,\epsilon}(v_{i_1}m_2v_{i_2}^*)q_{k,\epsilon}\mathbf f_{k+1,-\epsilon} \dotsb \mathbf f_{k+1,-\epsilon}q_{k,\epsilon}(v_{i_{l-1}}m_l)q_{k,\epsilon},
\end{multline*}
where have used the fact that $\mathbf f_{k+1,-\epsilon}$ commutes with $M_1^{-\epsilon} \otimes (M_1^{-\epsilon})^{op}$.  The result follows.
\end{proof}

\begin{theorem}\label{envelope}
For any $k \geq 1$ we have
\begin{equation*}
 M_k \boxtimes M_k \simeq M_k \boxtimes_{e_{k-1}} M_k^{op}.
\end{equation*}
\end{theorem}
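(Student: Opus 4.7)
The containment $W^*(M_k \otimes M_k^{op}, \mathbf{f}_k) \subseteq M_k \boxtimes M_k$ is immediate, and the identification of this $W^*$-algebra with Popa's symmetric enveloping algebra was already recorded in the paragraph preceding Lemma~\ref{compress}. So the task reduces to proving the reverse containment $M_k \boxtimes M_k \subseteq W^*(M_k \otimes M_k^{op}, \mathbf{f}_k)$.

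My plan is to prove the reverse containment by induction on $k \geq 1$, with the shading $\epsilon$ alternating at each step. Write $N_k^\epsilon := W^*(M_k^\epsilon \otimes (M_k^\epsilon)^{op}, \mathbf{f}_{k,\epsilon})$. For the inductive step, assume $M_k^\epsilon \boxtimes M_k^\epsilon = N_k^\epsilon$. By Lemma~\ref{compress}(1) and (2), the isomorphism $\theta_{k,\epsilon}$ carries $M_k^\epsilon \boxtimes M_k^\epsilon$ onto $q_{k,\epsilon}(M_{k+1}^{-\epsilon} \boxtimes M_{k+1}^{-\epsilon}) q_{k,\epsilon}$ and restricts to an isomorphism of $N_k^\epsilon$ with $q_{k,\epsilon} N_{k+1}^{-\epsilon} q_{k,\epsilon}$; the inductive hypothesis therefore yields the compressed equality
\[
q_{k,\epsilon}(M_{k+1}^{-\epsilon} \boxtimes M_{k+1}^{-\epsilon}) q_{k,\epsilon} = q_{k,\epsilon} N_{k+1}^{-\epsilon} q_{k,\epsilon}.
\]
To lift this to the full equality $M_{k+1}^{-\epsilon} \boxtimes M_{k+1}^{-\epsilon} = N_{k+1}^{-\epsilon}$, I would replicate the partial-isometry spreading argument from the proof of Lemma~\ref{compress}(2). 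Concretely, choose $v_1, \ldots, v_n$ in the copy of $M_1 \otimes M_1^{op}$ sitting inside $N_{k+1}^{-\epsilon}$ with $v_i^* v_i \leq q_{k,\epsilon}$ and $\sum_i v_i^* q_{k,\epsilon} v_i = 1$. For any $b \in M_{k+1}^{-\epsilon} \boxtimes M_{k+1}^{-\epsilon}$, the expansion $b = \sum_{i,j} v_i^* (q_{k,\epsilon} v_i b v_j^* q_{k,\epsilon}) v_j$ exhibits $b$ as an element of $N_{k+1}^{-\epsilon}$, since each factor $q_{k,\epsilon} v_i b v_j^* q_{k,\epsilon}$ lies in $q_{k,\epsilon}(M_{k+1}^{-\epsilon} \boxtimes M_{k+1}^{-\epsilon}) q_{k,\epsilon} = q_{k,\epsilon} N_{k+1}^{-\epsilon} q_{k,\epsilon}$. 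This closes the inductive step.

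The main obstacle will be the base case $k=1$: one must verify directly that $M_1^\epsilon \boxtimes M_1^\epsilon$ is generated as a von Neumann algebra by $M_1^\epsilon \otimes (M_1^\epsilon)^{op}$ together with $\mathbf{f}_{1,\epsilon}$. Because $\mathbf{f}_{1,\epsilon}$ has no horizontal through-strings (only a pair of caps on each side), this generation is not transparent. My plan would be to use the orthogonal-basis description from Section~\ref{sec:orthogonal} to enumerate the elements of $V^\epsilon_{2,2}(2s, 2t)$ and check directly that each lies in the algebra generated by $M_1^\epsilon \otimes (M_1^\epsilon)^{op}$ and $\mathbf{f}_{1,\epsilon}$, exploiting the fact that products of $\mathbf{f}_{1,\epsilon}$ with tensor-product elements and their adjoints realize all the needed planar connections between the top, bottom, and side strings.
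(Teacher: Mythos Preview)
Your inductive step from $k$ to $k+1$ is fine, but the proposal has a genuine gap: the base case $k=1$ is never established, and your plan for it (``enumerate $V^\epsilon_{2,2}(2s,2t)$ and check directly'') is not a proof. At $k=1$ the projection $\mathbf f_{1,\epsilon}$ is just a pair of side-caps, and the \emph{algebraic} span of words in $M_1\otimes M_1^{op}$ and $\mathbf f_1$ does not obviously contain, say, an arbitrary $x\in P_2\subset V_{2,2}(0,0)$ when $P_2$ is strictly larger than Temperley--Lieb. Showing that such elements appear in the von Neumann closure is exactly the content of the theorem; your outline gives no mechanism for producing them.

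The paper avoids this difficulty by running Lemma~\ref{compress} in the opposite direction from your induction. Rather than assuming the result at level $k$ and pushing it up to $k+1$, the paper fixes $x\in V_{2k,2k}(2s,2t)$ and observes that $x\in N_k^\epsilon$ as soon as the element $c_{k,\epsilon}^*xc_{k,\epsilon}$ (and iterates thereof) lies in $N_{k+2l}^{\epsilon}$ for some large $l$. The point is that after wrapping enough strings around (specifically $l=t+k$), there is room to write the resulting diagram explicitly as a product of two elements of $M_{k+2l}\otimes M_{k+2l}^{op}$ and one Temperley--Lieb element, and the latter lies in $N_{k+2l}$ because all the Jones projections $e_0,\dotsc,e_{2k+4l-2}$ do. So the paper never needs the full equality $N_m=M_m\boxtimes M_m$ at any auxiliary level $m$ as an inductive hypothesis---it only needs membership of one specific element, which it exhibits by hand. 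Your induction reduces everything to the hardest case; the paper's argument reduces each case to an easy explicit check at a high level.
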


\begin{proof}
 As discussed above, we just need to show that $M_{k} \boxtimes M_k = W^*(M_k \otimes M_k^{op},\mathbf{f}_k)$.  If $x \in Gr_k(\mc P) \boxtimes Gr_k(\mc P)$, then by Lemma \ref{compress} we have $x \in W^*(M_k \otimes M_k^{op},\mathbf f_k)$ if $c_{k,+}^*xc_{k,+} \in W^*(M_{k+1}^- \otimes (M_{k+1}^-)^{op},\mathbf f_{k+1,-})$.  Iterating, it is enough to show that for $l$ sufficiently large we have
\begin{equation*}
\begin{tikzpicture}[scale=.5]
  \draw[Box] (0,0) rectangle (2,1); \node at (1,.5) {$x$};
  \draw[thickline] (1,1) -- (1,2);
  \draw[thickline] (1,0) -- (1,-1);
  \draw[thickline] (0,.5) -- (-1,.5);
  \draw[thickline] (2,.5) -- (3,.5);
  \draw[thickline] (-1,1) .. node[rcount, near end,scale=.75]{$2l$} controls ++(.75,0) and ++(0,-.75)  .. (0,2);
  \draw[thickline] (-1,0) .. node[rcount,near end,scale=.75]{$2l$} controls ++(.75,0) and ++(0,.75) .. (0,-1);
  \draw[thickline] (2,2) .. node[rcount, near start,scale=.75]{$2l$} controls ++(0,-.75) and ++(-.75,0) .. (3,1);
  \draw[thickline] (2,-1) .. node[rcount,near start,scale=.75]{$2l$} controls ++(0,.75) and ++(-.75,0) .. (3,0);
 \node[right] at (3,.5) {$\in W^*(M_{k+2l} \otimes M_{k+2l}^{op},\mathbf f_{k+2l})$};
\end{tikzpicture}
\end{equation*}

Let $x \in V_{2k,2k}(2s,2t)$, then for $l = t+k$ we have
\begin{equation*}
 \begin{tikzpicture}
  \draw[Box] (0,0) rectangle (2,1); \node at (1,.5) {$x$};
  \draw[verythickline] (1,1) -- (1,2);
  \draw[verythickline] (1,0) -- (1,-1);
  \draw[verythickline] (0,.5) -- (-1,.5);
  \draw[verythickline] (2,.5) -- (3,.5);
  \draw[verythickline] (-1,1) .. node[rcount]{$2l$} controls ++(.75,0) and ++(0,-.75)  .. (0,2);
  \draw[verythickline] (-1,0) .. node[rcount]{$2l$} controls ++(.75,0) and ++(0,.75) .. (0,-1);
  \draw[verythickline] (2,2) .. node[rcount]{$2l$} controls ++(0,-.75) and ++(-.75,0) .. (3,1);
  \draw[verythickline] (2,-1) .. node[rcount]{$2l$} controls ++(0,.75) and ++(-.75,0) .. (3,0);
  \node[right = 2mm,scale=1.1] at (3,.5) {$= \delta^{-(l+t)}\cdot$};
\begin{scope}[xshift=7cm,yshift=.6cm]
\draw[Box] (0,0) rectangle (2,1); \node at (1,.5) {$x$};
  \draw[verythickline] (1,1) -- (1,2); 
  \draw[verythickline] (-1.5,1) .. node[rcount]{$2l$} controls ++(.75,0) and ++(0,-.75)  .. (0,2);
  \draw[thickline] (-1.5,-1.5) .. node[rcount]{$2l$} controls ++(.75,0) and ++(0,.75) .. (0,-2.25);
  \draw[thickline] (0,.6) -- node[rcount]{$k$} (-1.5,.6);
  \draw[thickline] (0,.3) arc (90:270:.5cm) -- (3,-.7) arc (90:-90:.5cm and .25cm) -- (-1.5,-1.2);
  \draw[thickline] (1,0) arc (180:270:.4cm) -- (3,-.4) arc (90:-90:.9cm and .55cm) -- (2,-1.5) .. controls ++(-1,0) and ++(0,.5) .. (.6,-2.25);
  \draw[thickline] (2,.6) -- (3,.6) .. controls ++(.5,0) and ++(-.5,0) .. (5,.25) -- node[pos=.65,rcount] {$k$} (7.5,.25);
  \draw[thickline] (2,.3) -- (3,.3) .. controls ++(.75,0) and ++(-.75,0) .. (5,-.5) -- (7.5,-.5);
  \draw[thickline] (7.5,-1) -- node[pos=.35,count,rectangle, rounded corners] {$2k$} (5,-1) .. controls ++(-.8,0) and ++(1.3,0) ..  (3,-1.8) -- (2,-1.8) .. controls ++(-.6,0) and ++(0,.4) .. (1.2,-2.25);
  \draw[thickline] (5,1.5) arc (90:270:.5cm) -- (5.5,.5) arc(-90:90:.5cm) -- (5,1.5); \node[count] at (4.4,1) {$l$};
  \draw[thickline] (5,-2) arc (270:90:.3cm) -- (5.5,-1.4) arc(90:-90:.3cm) -- (5,-2); \node[count] at (4.6,-1.7) {$t$};
  \draw[thickline] (6.5,2) .. node[rcount]{$2l$} controls ++(0,-.5) and ++(-.5,0) .. (7.5,.75);
  \draw[thickline] (6.5,-2.25) .. controls ++(0,.5) and ++(-.5,0) .. (7.5,-1.5);
  \draw[dashed,Box] (-1.5,-.88) rectangle (2.5,2);
  \draw[dashed,Box] (-1.5,-2.25) rectangle (2.5,-1.02);
  \draw[dashed,Box] (3,-2.25) rectangle (5,2);
  \draw[dashed,Box] (5.5,-2.25) rectangle (7.5,-.3);
  \draw[dashed,Box] (5.5,-.12) rectangle (7.5,2);
\end{scope}
 \end{tikzpicture}
\end{equation*}
is the product of three elements in $M_{k+2l} \boxtimes M_{k+2l}$.  The first and third terms are (manifestly) elements of $M_{k+2l} \otimes M_{k+2l}^{op}$.  The second term is a Temperley-Lieb element, but since $W^*(M_{k+2l} \otimes M_{k+2l}^{op},\mathbf{f}_{k+2l})$ contains all of the standard Temperley-Lieb generators
\begin{equation*}
\begin{tikzpicture}[scale=.75]
\draw[Box] (0,0) rectangle (1.5,1.5); \draw[thick] (0,.9) arc (90:-90:.2cm and .15cm);
  \draw[thick] (1.5,.9) arc(90:270:.2cm and .15cm); \draw[thickline] (0,1.2) -- node[rcount] {$i$} (1.5,1.2);
  \draw[thickline] (0,.3) -- (1.5,.3);
  \node[left =1mm] at (0,.75) {$e_i = $}; \node[right] at (1.5,.75) {$, \qquad (i = 0,\dotsc,2k+4l-2),$};
 \end{tikzpicture}
\end{equation*}
it contains any such Temperley-Lieb element.  This completes the proof.
\end{proof}

We will now give a diagrammatic computation of the index $[M_0 \boxtimes M_0:M_0 \otimes M_0^{op}]$.  Let $\Gamma$ be the principal graph of $\mc P$ (see e.g. \cite{ghj}), and let $\Gamma_+$ denote the collection of even vertices.  Associated to each $v \in \Gamma_+$ is an irreducible $M_0-M_0$ bimodule $X_v$.  Let $*$ denote the distinguished vertex, corresponding to $X_* = L^2(M_0)$, and for $k \geq 0$ let $\Gamma_{+,k}$ denote the set of vertices in $\Gamma$ whose distance from $*$ is less than or equal to $2k$.  Then we have
\begin{equation*}
 {}_{M_0}L^2(M_k)_{M_0} \simeq \bigoplus_{v \in \Gamma_{+,k}} X_v \otimes \mc H_v(k),
\end{equation*}
where $\mc H_v(k)$ are auxiliary finite-dimensional Hilbert spaces, whose dimensions we denote by $n_k(v)$.

It follows that
\begin{equation*}
{}_{M_0 \otimes M_0^{op}}L^2(M_k \otimes M_{k}^{op})_{M_0 \otimes M_0^{op}} \simeq \bigoplus_{v,w \in \Gamma_{+,k}} X_v \otimes \overline{X_w} \otimes (\mc H_v(k) \otimes \overline{\mc H_w(k)}),
\end{equation*}
where $\overline{X_w}$ is the contragredient bimodule and $\overline{\mc H_w(k)}$ is the conjugate Hilbert space.  Let $1_v(k) \in \mc H_v(k) \otimes \overline{\mc H_v(k)}$ be the unit under the natural identification with $\Hom_{\C}(\mc H_k(v))$, and define $p_k(v)$ to be the projection from $L^2(M_k \otimes M_k^{op})$ onto $X_v \otimes \overline{X_v} \otimes 1_v(k)$.  Then define
\begin{equation*}
 p_k = \sum_{v \in \Gamma_{+,k}} p_k(v).
\end{equation*}

Recall that $P_{2k}$ can be identified with $\Hom_{M_0,M_0}(L^2(M_{k}))$, in particular the central projections $q_k(v)$ of $P_{2k}$ are indexed by $v \in \Gamma_{+,k}$.  Let $t_k(v)$ denote the trace of a minimal projection in $P_{2k}(v) := P_{2k}\cdot q_{k}(v)$, the central component of $P_{2k}$ corresponding to $v \in \Gamma_{+,k}$.  If $\{e_{ij}(v):1 \leq i,j \leq n_k(v)\}$ is a choice of matrix units for $P_{2k}(v)$, then we have
\begin{equation*}
 p_k(v) = \frac{1}{n_k(v)} \sum_{1 \leq i,j \leq n_k(v)} e_{ij}(v) \otimes e_{ji}(v)^{op}.
\end{equation*}
Graphically, we will represent $p_k(v)$ using a Sweedler type convention as follows:
\begin{equation*}
\begin{tikzpicture}[scale=.75]
\draw[Box] (0,0) rectangle (2,1); \node[flip] at (1,.5) {$p^{(2)}_k(v)$};
\draw[Box] (0,1.5) rectangle (2,2.5); \node at (1,2) {$p^{(1)}_k(v)$};
\draw[medthick] (-.5,.5) -- (0,.5); 
\draw[medthick] (2,.5) -- (2.5,.5);
\draw[medthick] (-.5,2) -- (0,2); 
\draw[medthick] (2,2) -- (2.5,2);
\node[left] at (-.6,1.25) {$p_k(v) = $};
\end{tikzpicture}
\end{equation*}

\begin{lemma}\label{cleaver}
Let $x, y \in P_{2k}$ and $v \in \Gamma_{+,k}$, then we have
\begin{equation*}
\begin{tikzpicture}[scale=.7]
\draw[Box] (0,0) rectangle (2,1); \node[flip] at (1,.5) {$p^{(2)}_k(v)$};
\draw[Box] (0,1.5) rectangle (2,2.5); \node at (1,2) {$p^{(1)}_k(v)$};
\draw[Box] (2.5,0) rectangle (4.5,1); \node[flip] at (3.5,.5) {$y$};
\draw[Box] (2.5,1.5) rectangle (4.5,2.5); \node at (3.5,2) {$x$};
\draw[medthick] (4.5,.5) -- (4.75,.5) arc(90:-90:.5cm) -- (-.25,-.5) arc(270:90:.5cm) -- (0,.5); 
\draw[medthick] (4.5,2) -- (4.75,2) arc(-90:90:.5cm) -- (-.25,3) arc (90:270:.5cm) -- (0,2);
\draw[medthick] (2,.5) -- (2.5,.5);
\draw[medthick] (2,2) -- (2.5,2);
\begin{scope}[xshift=7.5cm]
\draw[Box] (2.5,0) rectangle (4.5,1); \node[flip] at (3.5,.5) {$y$};
\draw[Box] (2.5,1.5) rectangle (4.5,2.5); \node at (3.5,2) {$x$};
\draw[medthick] (4.5,2) -- (4.75,2) arc(90:0:.5cm) -- node[count,rectangle,rounded corners, scale=.8] {$q_k(v)$} (5.25,1) arc(0:-90:.5cm) -- (4.5,.5);\node[marked,scale=.8] at (6.05,1.25) {};
\draw[medthick] (2.5,2) -- (2.25,2) arc(90:180:.5cm) -- (1.75,1) arc(180:270:.5cm) -- (2.5,.5);
\node[left,scale=1.25] at (1.5,1.25) {$= \frac{\delta^{2k}\cdot t_k(v)}{n_k(v)} \cdot$};
\end{scope}
\end{tikzpicture}
\end{equation*}
\end{lemma}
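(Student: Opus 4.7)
The plan is to expand $p_k(v) = \frac{1}{n_k(v)}\sum_{i,j}e_{ij}(v)\otimes e_{ji}(v)^{op}$ in matrix units and reduce both diagrams to values of the normalized Markov trace $\tau$ on $P_{2k}$. Throughout, let $T$ denote the picture-closure of an element $z\in P_{2k}$ along its left and right $2k$-bundles, so $T(z) = \delta^{2k}\tau(z)$; on the central component $P_{2k}(v)\cong M_{n_k(v)}(\mb C)$ we further have $\tau|_{P_{2k}(v)} = t_k(v)\cdot\tr$, where $\tr$ is the unnormalized matrix trace.

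First I would observe that the LHS picture splits as two disjoint closed loops, since no string connects the top and bottom halves. The top loop is the picture-closure of $p_k^{(1)}(v)\,x$ in $P_{2k}$ and the bottom loop is the picture-closure of $p_k^{(2)}(v)\,y$; the flipped orientation of the lower boxes records the $M_k\otimes M_k^{op}$-inclusion but does not affect the value of each closure, by trace invariance under rotation. Substituting the Sweedler expansion of $p_k(v)$ yields
\[\mathrm{LHS}\;=\;\frac{1}{n_k(v)}\sum_{i,j} T(e_{ij}(v)x)\,T(e_{ji}(v)y)\;=\;\frac{\delta^{4k}}{n_k(v)}\sum_{i,j}\tau(e_{ij}(v)x)\,\tau(e_{ji}(v)y).\]
The RHS picture is a single closed loop with $q_k(v)$ inserted on one of the closing arcs, so its picture-closure is $T(xy\,q_k(v)) = \delta^{2k}\tau(xy\,q_k(v))$, and multiplying by the stated prefactor gives $\frac{\delta^{4k}t_k(v)}{n_k(v)}\tau(xy\,q_k(v))$.

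The lemma therefore reduces to the matrix-unit identity
\[\sum_{i,j}\tau(e_{ij}(v)x)\,\tau(e_{ji}(v)y)\;=\;t_k(v)\,\tau(xy\,q_k(v)).\]
Centrality of $q_k(v)$ lets us replace $x$ and $y$ by their $v$-components $x_v = xq_k(v)$ and $y_v = yq_k(v)$. Using $\tau|_{P_{2k}(v)} = t_k(v)\cdot\tr$ and $\tr(e_{ij}z) = z_{ji}$ gives $\tau(e_{ij}(v)x_v) = t_k(v)(x_v)_{ji}$ and similarly for $y_v$, so the left-hand side collapses to $t_k(v)^2\tr(x_vy_v)$; this equals the right-hand side since $\tau(x_vy_v) = t_k(v)\tr(x_vy_v)$. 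Combining this with the reductions above proves the lemma.

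The main technical care is bookkeeping the two $\delta^{2k}$-factors produced by the two planar closures on the LHS against the single closure on the RHS, and distinguishing the normalized Markov trace on $P_{2k}$ from the unnormalized matrix trace on each component $P_{2k}(v)$. Once these factors are aligned, the substantive content is just the standard $\sum_{i,j}e_{ij}\otimes e_{ji}$ matrix-unit identity.
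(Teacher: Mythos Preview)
Your proposal is correct and follows essentially the same route as the paper's proof: expand $p_k(v)$ in matrix units, recognize the left-hand side as the product $\frac{1}{n_k(v)}\sum_{i,j}\delta^{4k}t_k(v)^2\,\mathrm{Tr}_v(e_{ij}(v)x)\,\mathrm{Tr}_v(e_{ji}(v)y)$ of two planar closures, and collapse the sum using the matrix-unit identity $\sum_{i,j}\mathrm{Tr}_v(e_{ij}x)\,\mathrm{Tr}_v(e_{ji}y)=\mathrm{Tr}_v(xyq_k(v))$. Your explicit bookkeeping of the two $\delta^{2k}$ factors and the distinction between the normalized Markov trace and the unnormalized matrix trace is in fact more careful than the paper's version, which compresses these steps.
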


\begin{proof}
Let $Tr_v$ denote the unnormalized trace on $P_{2k}(v)\simeq M_{n_k(v)}(\C)$, so that $\tau_k|_{P_{2k}(v)} = t_k(v) \cdot Tr_v$.  Then the left hand side of the formula in the statement is equal to
\begin{multline*}
 \frac{1}{n_k(v)}\sum_{1 \leq i,j \leq n_k(v)} \delta^{4k} \cdot t_{v}(k)^2 Tr_v(e_{ij}(v)x)Tr_v(ye_{ji}(v)) \\
= \frac{\delta^{2k}\cdot t_k(v)}{n_k(v)}\sum_{1 \leq i,j \leq n_k(v)} \delta^{2k} \cdot t_k(v) Tr_v(xyq_k(v)),
\end{multline*}
which is equal to the right hand side.
\end{proof}

Define $\Psi_k:Gr_k \otimes Gr_k^{op} \to Gr_0 \boxtimes Gr_0$ by linear extension of
\begin{equation*}
\begin{tikzpicture}[scale=.7]
\draw[Box] (0,0) rectangle (2,1); \node at (1,.5) {$y$}; \draw[thickline] (1,0) -- (1,-.25);
\draw[Box] (0,3) rectangle (2,4); \node at (1,3.5) {$x$};\draw [thickline] (1,4) -- (1,4.25);
\draw[Box] (0,1.5) rectangle (2,2.5); \node at (1,2) {$q_k(v)$}; \node[marked] at (2.2,2) {};
\draw[medthick] (2,.5) arc(-90:90:.375cm) -- (1.5,1.25) arc(270:180:.25cm);
\draw[medthick] (0,.5) arc(270:90:.375cm) -- (.5,1.25) arc(-90:0:.25cm);
\draw[medthick] (0,3.5) arc(90:270:.375cm) -- (.5,2.75) arc(90:0:.25cm);
\draw[medthick] (2,3.5) arc(90:-90:.375cm) -- (1.5,2.75) arc(90:180:.25cm);
\node[left] at (-1,2) {$\Psi_k(x \otimes y^{op}) = \displaystyle \sum_{v \in \Gamma_{+,k}} \sqrt{\frac{t_k(v)}{n_k(v)}}\; \cdot$};
\end{tikzpicture}
\end{equation*}

\begin{theorem}\label{pisom}
$\Psi_k$ extends to a $M_0 \otimes M_0^{op}$-bilinear partial isometry from $L^2(M_k \otimes M_k^{op})$ into $L^2(M_0 \boxtimes M_0)$, whose initial projection is equal to $p_k$.  
\end{theorem}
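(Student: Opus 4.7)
The plan is to verify two statements: (i) $\Psi_k$ is $M_0 \otimes M_0^{op}$-bilinear, i.e.\ $\Psi_k(c \cdot a \cdot d) = c \cdot \Psi_k(a) \cdot d$ for all $a \in Gr_k \otimes Gr_k^{op}$ and $c, d \in M_0 \otimes M_0^{op}$; and (ii) the inner product identity
\begin{equation*}
\langle \Psi_k(a), \Psi_k(b) \rangle_{L^2(M_0 \boxtimes M_0)} = \langle p_k a, b \rangle_{L^2(M_k \otimes M_k^{op})}
\end{equation*}
holds for $a, b \in Gr_k \otimes Gr_k^{op}$. Granted (ii), the estimate $\|\Psi_k a\|^2 = \langle p_k a, a \rangle \leq \|a\|^2$ implies that $\Psi_k$ extends to a contraction on $L^2(M_k \otimes M_k^{op})$, and polarizing the identity yields $\Psi_k^*\Psi_k = p_k$; combined with (i), this realizes $\Psi_k$ as the desired bilinear partial isometry with initial projection $p_k$.

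For (i), I would argue by planar isotopy. Given $m, m', n, n' \in Gr_0$, one has $(m \otimes n^{op})(x \otimes y^{op})(m' \otimes n'^{op}) = (mxm') \otimes (n'yn)^{op}$. Under the inclusion $Gr_0 \hookrightarrow Gr_k$, each of $m, m', n, n'$ picks up $k$ passing-through strings beneath its box, but these strings do not interact with the central $q_k(v)$-region of the $\Psi_k$-diagram. A planar isotopy therefore slides the boxes $m, m'$ out above $x$ and $n, n'$ out below $y$, to the left and right of the entire central construction, producing exactly the picture of $(m \otimes n^{op}) \wedge \Psi_k(x \otimes y^{op}) \wedge (m' \otimes n'^{op})$ in $V$.

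For (ii), expanding the right-hand side via the Sweedler decomposition of $p_k(v)$ gives
\begin{equation*}
\langle p_k(x_1 \otimes y_1^{op}), x_2 \otimes y_2^{op} \rangle = \sum_v \frac{1}{n_k(v)} \sum_{i,j} \tau_k(x_2^* e_{ij}(v) x_1) \, \tau_k(y_1 e_{ji}(v) y_2^*).
\end{equation*}
The left-hand side equals $\sum_{v,w} \sqrt{t_k(v)t_k(w)/(n_k(v)n_k(w))} \cdot T(v,w)$, where $T(v,w) = (\tau_0 \boxtimes \tau_0)(D_w(x_2, y_2)^\dagger D_v(x_1, y_1))$ is a closed planar diagram containing two middle boxes $q_k(v), q_k(w)$ flanked by $x_1, x_2^*$ above and $y_1, y_2^*$ below, with the outer top and bottom strings closed by the Voiculescu sums $\sum TL$. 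A planar isotopy brings $q_k(v)$ and $q_k(w)$ together along a common chain of $2k$ strings; orthogonality $q_k(v)q_k(w) = \delta_{v,w}q_k(v)$ then forces $T(v,w) = 0$ unless $v = w$. For $v = w$, the same isotopy identifies the surviving configuration with the left-hand side of Lemma \ref{cleaver} applied in the $x$- and $y$-sectors, producing by the lemma a factor $\delta^{2k}t_k(v)/n_k(v)$ times the Sweedler sum $\sum_{i,j}\tau_k(x_2^* e_{ij}(v)x_1)\tau_k(y_1 e_{ji}(v)y_2^*)$. The coefficient $\sqrt{t_k(v)/n_k(v)}$ in the definition of $\Psi_k$ was arranged precisely so that its square combines with the normalization from Lemma \ref{cleaver} (after cancelling $\delta$-factors generated by the isotopy) to produce the coefficient $1/n_k(v)$ appearing on the right-hand side.

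The main obstacle will be the diagrammatic bookkeeping in (ii): making the planar isotopy explicit enough to invoke orthogonality of the $q_k(v)$, and then tracking powers of $\delta$ and the loop normalizations carefully when applying Lemma \ref{cleaver}. Once the geometric identification of the two diagrams is made, however, the numerical factors align by design and (ii) follows.
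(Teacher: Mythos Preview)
Your proposal is correct and follows essentially the same argument as the paper: compute $\langle \Psi_k(x_1\otimes y_1^{op}),\Psi_k(x_2\otimes y_2^{op})\rangle$ as a double sum over $v,w$, kill the off-diagonal terms, and invoke Lemma~\ref{cleaver} on the diagonal to recover $\langle p_k(x_1\otimes y_1^{op}),x_2\otimes y_2^{op}\rangle$. One small correction: the vanishing for $v\neq w$ is not a pure planar isotopy---the boxes $q_k(v)$ and $q_k(w)$ are separated by the elements in the dashed regions, so you need that $q_k(v)$ is \emph{central} in $P_{2k}$ to slide it past and meet $q_k(w)$ (this is exactly how the paper phrases it); also note that the surviving $v=w$ configuration matches the \emph{right}-hand side of Lemma~\ref{cleaver}, which you then convert to the $p_k(v)$ picture on the left.
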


\begin{proof}
Let $x_1,x_2,y_1,y_2 \in Gr_k$, then $\langle \Psi_k(x_1 \otimes y_1^{op}), \Psi_k(x_2 \otimes y_2^{op})\rangle$ is equal to
\begin{equation*}
 \begin{tikzpicture}[scale=.6]
 \draw[Box](0,0) rectangle (2,1); \node at (1,.5) {$x_1$};
\draw[Box](0,-3) rectangle (2,-2); \node at (1,-2.5) {$y_1$};
\draw[Box] (-2.5,-1.5) rectangle (-.5,-.5); \node at (-1.5,-1) {$q_k(v)$}; \node[marked,scale=.8] at (-.3,-1) {};
\draw[medthick] (2,-2.5) arc(-90:90:.35cm) -- (-1,-1.8) arc(270:180:.3cm);
\draw[medthick] (0,-2.5) -- (-1.7,-2.5) arc(270:180:.3cm) -- (-2,-1.5);
\draw[medthick] (0,.5) -- (-1.7,.5) arc(90:180:.3cm) -- (-2,-.5);
\draw[medthick] (2,.5) arc(90:-90:.35cm) -- (-1,-.2) arc(90:180:.3cm);

\begin{scope}[xshift=6cm,xscale=-1]
 \draw[Box](0,0) rectangle (2,1); \node at (1,.5) {$x_2^*$};
\draw[Box](0,-3) rectangle (2,-2); \node at (1,-2.5) {$y_2^*$};
\draw[Box] (-2.5,-1.5) rectangle (-.5,-.5); \node at (-1.5,-1) {$q_w(k)$}; \node[marked,scale=.8] at (-.3,-1) {};
\draw[medthick] (2,-2.5) arc(-90:90:.35cm) -- (-1,-1.8) arc(270:180:.3cm);
\draw[medthick] (0,-2.5) -- (-1.7,-2.5) arc(270:180:.3cm) -- (-2,-1.5);
\draw[medthick] (0,.5) -- (-1.7,.5) arc(90:180:.3cm) -- (-2,-.5);
\draw[medthick] (2,.5) arc(90:-90:.35cm) -- (-1,-.2) arc(90:180:.3cm);
\end{scope}

\draw[medthick] (1,1) arc(180:90:.6cm) -- node[Box,draw,fill=white] {$\sum TL$} (4.4,1.6) arc(90:0:.6cm);
\draw[medthick] (1,-3) arc(180:270:.6cm) -- node[Box,draw,fill=white] {$\sum TL$} (4.4,-3.6) arc(-90:0:.6cm);

\draw[Box,dashed] (-.25,-4.4) rectangle (6.25,-1.25); \draw[Box,dashed] (-.25,-.75) rectangle (6.25,2.4);
\node[left=2mm] at (-2.5,-1) {$\displaystyle \sum_{v,w \in \Gamma_{+,k}} \sqrt{\frac{t_k(v)t_w(k)}{n_k(v)n_w(k)}}$};
 \end{tikzpicture}
\end{equation*}
Let $x,y^{op}$ denote the elements of $P_{2k},P_{2k}^{op}$ represented by the dashed boxes above.  Since $q_k(v)$ and $q_w(k)$ are central, the terms in the sum above for which $v \neq w$ are equal to zero.  But applying Lemma \ref{cleaver} to $x,y$, we see that the term corresponding to $v=w$ is equal to $\langle p_k(v)\cdot (x_1 \otimes y_1^{op}), x_2 \otimes y_2^{op}\rangle$.  So we have proved that
\begin{equation*}
 \langle \Psi_k(x_1 \otimes y_1^{op}), \Psi_k(x_2 \otimes y_2^{op})\rangle = \langle p_k\cdot (x_1 \otimes y_1^{op}), x_2 \otimes y_2^{op}\rangle,
\end{equation*}
from which the result follows.
\end{proof}

From this we can deduce the following corollary, which is due to Ocneanu \cite{ocn} in the finite-depth case, and Popa \cite{pop3} in the general case.

\begin{corollary} \label{bimod}
We have
\begin{equation*}
 {}_{M_0 \otimes M_0^{op}} L^2(M_0 \boxtimes M_0)_{M_0 \otimes M_0^{op}} \simeq \bigoplus_{v \in \Gamma_+} X_v \otimes \overline{X_v}.
\end{equation*}
\end{corollary}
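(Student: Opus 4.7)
The plan is to assemble the partial isometries $\Psi_k$ from Theorem \ref{pisom} across all $k\geq 0$ and pass to the inductive limit. For each fixed $k$, Theorem \ref{pisom} combined with the decomposition of $L^2(M_k\otimes M_k^{op})$ stated above and the formula $p_k(v)=n_k(v)^{-1}\sum_{i,j} e_{ij}(v)\otimes e_{ji}(v)^{op}$ shows that $p_k L^2(M_k\otimes M_k^{op})\simeq\bigoplus_{v\in\Gamma_{+,k}} X_v\otimes\overline{X_v}$ as $M_0\otimes M_0^{op}$-bimodules, so $\Psi_k$ yields an isometric embedding of this direct sum into $L^2(M_0\boxtimes M_0)$. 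In the finite-depth case one chooses $k$ large enough that $\Gamma_{+,k}=\Gamma_+$; in general one takes an inductive limit.

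To take this limit we must check coherence: for $v\in\Gamma_{+,k}\subset\Gamma_{+,k+1}$, the subbimodule $X_v\otimes\overline{X_v}\subset L^2(M_0\boxtimes M_0)$ produced by $\Psi_k$ should agree with the one produced by $\Psi_{k+1}$. This is a diagrammatic identity: pushing $q_k(v)\in P_{2k}$ into $P_{2k+2}$ by adding a string on each side decomposes it as a sum of $q_{k+1}(v')$ over vertices $v'$ adjacent to $v$ in the principal graph, and the normalization factors $\sqrt{t_k(v)/n_k(v)}$ are precisely those forced by the Markov property of the trace on $\mc P$, so the corresponding contributions to $\Psi_{k+1}$ collapse back to $\Psi_k$ on the $v$-isotypic component. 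Passing to the limit yields an isometric $M_0\otimes M_0^{op}$-bilinear embedding
\begin{equation*}
\Phi:\bigoplus_{v\in\Gamma_+} X_v\otimes\overline{X_v}\hookrightarrow L^2(M_0\boxtimes M_0).
\end{equation*}

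The final and hardest step is surjectivity of $\Phi$, equivalently density of $\bigcup_k\mathrm{Im}(\Psi_k)$ in $L^2(M_0\boxtimes M_0)$. Since $Gr_0\boxtimes Gr_0=\bigoplus_{s,t} V_{0,0}^+(2s,2t)$ is dense, it suffices to realize an arbitrary diagram $z\in V_{0,0}^+(2s,2t)=P_{s+t}$ as $\Psi_k(\eta)$ for some $k$ and some $\eta\in p_k L^2(M_k\otimes M_k^{op})$. One approach is to ``open'' $z$ along a horizontal arc crossing $2k$ strings for $k$ sufficiently large, representing $z$ as a contraction of two elements of $Gr_k$ through an element of $P_{2k}$, and then expanding spectrally in the central idempotents $q_k(v)$ so that each summand is recognized, after matching normalizations, as an image under $\Psi_k$. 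This density argument is the main obstacle, as it requires an explicit diagrammatic construction; the coherence check, by contrast, should follow routinely from the Markov property of $\mc P$ and standard manipulations in the principal graph.
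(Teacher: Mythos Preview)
Your approach matches the paper's. The one point worth noting is that the density step you flag as the ``main obstacle'' is in the paper a one-line formula: for $x\in V_{0,0}(2s,2t)$ take $k=t$ exactly (not merely ``sufficiently large'') and set
\[
x_t=\sum_{v\in\Gamma_{+,t}}\sqrt{\tfrac{n_t(v)}{t_t(v)}}\cdot\bigl(\text{$x$ stacked above $q_t(v)$}\bigr)\in M_t\otimes M_t^{op};
\]
applying $\Psi_t$ cancels the normalizations and inserts $\sum_v q_t(v)=1_{P_{2t}}$, recovering $x$ --- so the $2t$ bottom strands of $x$ already furnish the ``horizontal arc'' you seek, and no further diagrammatic construction is required. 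Conversely, the paper is more casual than you about the coherence step, simply asserting that since each $\Xi_k=\mathrm{Im}\,\Psi_k$ is isomorphic to $\bigoplus_{v\in\Gamma_{+,k}}X_v\otimes\overline{X_v}$ the closure of the union decomposes as claimed; your attention to compatibility across $k$ is therefore not misplaced.
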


\begin{proof}
Let $\Xi_k$ denote the range of $\Psi_k$, which is a closed subspace of $L^2(M_0 \boxtimes M_0)$ invariant under the left and right actions of $M_0 \otimes M_0^{op}$.  By Theorem \ref{pisom}, $\Xi_k$ is isomorphic to
\begin{equation*}
\bigoplus_{v \in \Gamma_{+,k}} X_v \otimes \overline{X_v}
\end{equation*}
as a $M_0 \otimes M_0^{op}$-bimodule.  It follows that $\overline{\bigcup_{k \geq 0} \Xi_k}$ is isomorphic as a $M_0 \otimes M_0^{op}$-bimodule to
\begin{equation*}
\bigoplus_{v \in \Gamma_+} X_v \otimes \overline{X_v}.
\end{equation*}
Now if $x \in V_{0,0}(2s,2t)$ then $x = \Psi_t(x_t)$, where $x_t \in M_{t} \otimes M_{t}^{op}$ is defined by
\begin{equation*}
\begin{tikzpicture}[scale=.6]
\draw[Box] (0,2.5) rectangle (2,3.5); \node at (1,3) {$x$};\draw [thickline] (1,3.5) -- (1,3.75);
\draw[Box] (0,0) rectangle (2,1); \node at (1,.5) {$q_t(v)$}; \node[marked] at (2.2,.5) {};
\draw[thickline](1,0) -- (1,-.25);
\draw[medthick] (1.25,2.5) arc(180:270:.25cm) -- (2.5,2.25);
\draw[medthick] (.75,2.5) arc(0:-90:.25cm) -- (-.5,2.25);
\draw[medthick] (1.25,1) arc(180:90:.25cm) -- (2.5,1.25);
\draw[medthick] (.75,1) arc(0:90:.25cm) -- (-.5,1.25);
\draw[Box,dashed](-.5,-.25) rectangle (2.5,1.6); \draw[Box,dashed] (-.5,1.9) rectangle (2.5,3.75);
\node[left=2mm,scale=1] at (-.75,1.75) {$x_t = \displaystyle\sum_{v \in \Gamma_{+,t}} \sqrt{\frac{n_k(v)}{t_k(v)}} \;\cdot $};
\end{tikzpicture}
\end{equation*}
So $\bigcup_{k \geq 0} \Xi_k$ contains $Gr_0 \boxtimes Gr_0$ and is therefore dense in $L^2(M_0 \boxtimes M_0)$, which completes the proof. 
\end{proof}

\begin{remark}
Note that it follows from the proof of Corollary \ref{bimod} that $\Psi_k$ is surjective if the depth of $\mc P$ is less than or equal to $2k$, i.e. if $\Gamma_+ = \Gamma_{+,k}$.  
\end{remark}

\begin{corollary} \label{index}
Let $k \geq 0$, then if $\Gamma_+$ is finite the index $[M_k \boxtimes M_k:M_k \otimes M_k^{op}]$ is equal to the global index
\begin{equation*}
 I = \sum_{v \in \Gamma_+} \dim_{M_0}(X_v)^2.
\end{equation*}
Otherwise $[M_k \boxtimes M_k:M_k \otimes M_k^{op}] = \infty$. 
\end{corollary}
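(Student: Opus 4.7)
The plan is to compute $[M_k \boxtimes M_k : M_k \otimes M_k^{op}]$ as the left $M_k \otimes M_k^{op}$-module dimension of $L^2(M_k \boxtimes M_k)$ and then to read off the answer from a bimodule decomposition in the spirit of Corollary \ref{bimod}.

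For $k = 0$ the calculation is essentially immediate from Corollary \ref{bimod}: the decomposition
\begin{equation*}
{}_{M_0 \otimes M_0^{op}} L^2(M_0 \boxtimes M_0)_{M_0 \otimes M_0^{op}} \simeq \bigoplus_{v \in \Gamma_+} X_v \otimes \overline{X_v}
\end{equation*}
means that the index equals $\sum_v \dim_{M_0 \otimes M_0^{op}}(X_v \otimes \overline{X_v})$. Each summand has left $M_0 \otimes M_0^{op}$-dimension equal to $\dim_{M_0}(X_v) \cdot \dim_{M_0^{op}}(\overline{X_v})$. Since $\overline{X_v}$ as a left $M_0^{op}$-module is the same data as $X_v$ as a right $M_0$-module, and the bifinite subfactor bimodules $X_v$ have matching left and right Murray--von Neumann dimensions, this product is $\dim_{M_0}(X_v)^2$. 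Summing over $v \in \Gamma_+$ produces $I$ when $\Gamma_+$ is finite and $+\infty$ otherwise.

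For $k \geq 1$ I would observe that the diagrammatic arguments behind Theorem \ref{pisom} and Corollary \ref{bimod} are purely planar-algebraic and shift cleanly: with $M_0, M_1, \ldots$ replaced by $M_k, M_{k+1}, \ldots$ throughout, the same central projections $q_\ell(v) \in P_{2\ell}$ and the same map prescription define $M_k \otimes M_k^{op}$-bilinear partial isometries $L^2(M_{k+\ell} \otimes M_{k+\ell}^{op}) \to L^2(M_k \boxtimes M_k)$ whose ranges exhaust the target, yielding
\begin{equation*}
{}_{M_k \otimes M_k^{op}} L^2(M_k \boxtimes M_k)_{M_k \otimes M_k^{op}} \simeq \bigoplus_{v \in \Gamma_+} X_v^{(k)} \otimes \overline{X_v^{(k)}},
\end{equation*}
where $X_v^{(k)}$ is the irreducible $M_k$-$M_k$ bimodule labelled by $v$. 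Since the standard invariant of $M_k \subset M_{k+1}$ is again $\mc P$, the dimensions $\dim_{M_k} X_v^{(k)}$ coincide with $\dim_{M_0} X_v$, and the same sum produces $I$ or $+\infty$.

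The point requiring the most care is the identification $\dim_{M_0^{op}}(\overline{X_v}) = \dim_{M_0}(X_v)$, which hinges on the equality of left and right dimensions for irreducible bifinite bimodules in the subfactor context; once this is in hand, the rest is a routine additivity calculation applied to the decomposition of Corollary \ref{bimod} (and its straightforward extension to higher $k$).
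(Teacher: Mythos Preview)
Your $k=0$ argument matches the paper's exactly: Corollary \ref{bimod} plus the observation that $\dim_{M_0 \otimes M_0^{op}}(X_v \otimes \overline{X_v}) = \dim_{M_0}(X_v)^2$ gives the index as $\sum_v \dim_{M_0}(X_v)^2$.

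For $k \geq 1$ your route diverges from the paper's. The paper does not redo the bimodule decomposition at level $k$; instead it invokes Lemma \ref{compress}, which exhibits $M_k^\epsilon \boxtimes M_k^\epsilon$ together with its subalgebra $M_k^\epsilon \otimes (M_k^\epsilon)^{op}$ as a compression (by a projection in $M_{k+1}^{-\epsilon} \otimes (M_{k+1}^{-\epsilon})^{op}$) of the corresponding pair at level $k+1$ with the shading reversed. Since the index of an inclusion of II$_1$ factors is unchanged under compression by a projection in the smaller factor, one gets $[M_k^\epsilon \boxtimes M_k^\epsilon : M_k^\epsilon \otimes (M_k^\epsilon)^{op}] = [M_{k+1}^{-\epsilon} \boxtimes M_{k+1}^{-\epsilon} : M_{k+1}^{-\epsilon} \otimes (M_{k+1}^{-\epsilon})^{op}]$, so all levels have the same index. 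The only residual point is that the chain of reductions lands at $k=0$ with shading $(-1)^k$, so for odd $k$ one needs that the global index of $\mc P^{op}$ equals that of $\mc P$, which the paper states explicitly.

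Your alternative---rerunning Theorem \ref{pisom} and Corollary \ref{bimod} for the tower based at $M_k$---would work, but two of your assertions need adjustment. First, the planar algebra of $M_k \subset M_{k+1}$ is $\mc P$ only for even $k$; for odd $k$ it is $\mc P^{op}$, so the irreducible $M_k$--$M_k$ bimodules are indexed by the even vertices of the \emph{dual} principal graph, not by $\Gamma_+$. Second, there is then no reason the individual dimensions $\dim_{M_k} X_v^{(k)}$ should coincide with $\dim_{M_0} X_v$ vertex-by-vertex when the two graphs differ (as they do, e.g., for the Haagerup subfactor). What is true, and what you actually need, is that the \emph{sum} of squared dimensions is the same---i.e., the global index of $\mc P^{op}$ equals that of $\mc P$---which is precisely the fact the paper invokes. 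So both approaches ultimately rest on the same ingredient; the paper's compression argument just gets there with less to re-verify.
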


\begin{proof}
For $k = 0$ this is clear from the previous corollary.  The result then follows for $k \geq 1$ from Lemma \ref{compress} and the fact that the global index of $\mc P^{op}$ (the planar algebra obtained from $\mc P$ by reversing shading) is the same as that of $\mc P$.
\end{proof}

\section{Derivations on planar algebra factors}\label{sec:derivations}

Throughout this section we will denote $A = Gr_0(\mc P)$ and $M = M_0(\mc P)$, where $\mc P$ is a subfactor planar algebra.
Define
\begin{equation*}
 \Phi_o = \bigoplus_{s,t \geq 0} V_{1,0}^-(2s+1,2t), \qquad \Phi_e = \bigoplus_{s,t \geq 0} V_{1,0}^+(2s,2t+1), \qquad \Phi = \Phi_o \oplus \Phi_e.
\end{equation*}
Observe that $\Phi$ has a natural right $A \otimes A^{op}$-module structure.

Given $Q \in \Phi_o$ and $x \in P_{n}$, define $\widetilde \delta_Q(x) \in A \boxtimes A$ by
\begin{equation*}
 \begin{tikzpicture}[scale=.75]
  \draw[Box](0,0) rectangle (2,1); \node at (1,.5) {$x$};
  \draw[Box](4,0) rectangle (6,1); \node at (5,.5) {$Q$};
  \draw[verythickline] (.5,1) -- (.5,1.75);
  \draw[verythickline] (1.5,1) arc(180:0:.5cm) -- node[count,rectangle,rounded corners] {$k$} (2.5,-.5);
  \draw[verythickline] (5,1) -- (5,1.75); \draw[verythickline] (5,0) -- (5,-.5);
  \draw[thick] (1,1) arc(180:90:.75cm) -- (2.5,1.75) arc(90:0:.75cm) arc(180:270:.5cm) -- (4,.5);
  \node[left] at (-.25,.25) {$\widetilde \delta_Q(x) = \displaystyle \sum_{\substack{0 \leq k \leq 2n\\ k\text{ even}}}$};
 \end{tikzpicture}
\end{equation*}
Extend $\widetilde \delta_Q$ linearly to a obtain a map $\widetilde \delta_Q:A \to A \boxtimes A$.  If $Q \in \Phi_e$ we define $\widetilde \delta_Q:A \to A \boxtimes A$ likewise, except that we sum over odd instead of even $k$ in the formula above.  We then extend $Q \mapsto \widetilde \delta_Q$ linearly to $\Phi$.

\begin{proposition}If $Q \in \Phi$, then $\widetilde\delta_Q$ is a derivation from $A$ into $A \boxtimes A$.  The assignment $Q \mapsto \widetilde \delta_Q$ is a $A \otimes A^{op}$-linear map from $\Phi$ into $\Der(A,A \boxtimes A)$.
\end{proposition}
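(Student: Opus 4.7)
The plan is to verify both claims through direct diagrammatic manipulation. By bilinearity it suffices to check the Leibniz identity $\widetilde\delta_Q(xy)=\widetilde\delta_Q(x)\cdot y+x\cdot\widetilde\delta_Q(y)$ on pure homogeneous generators $x\in P_n$, $y\in P_m$ of $A=Gr_0(\mc P)$. The product $xy=x\wedge_0 y$ has top boundary of $2(n+m)$ strings, the first $2n$ coming from $x$ and the last $2m$ from $y$.

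Writing out the sum defining $\widetilde\delta_Q(xy)$, I partition the cut-positions $k$ according to whether the distinguished string attaching to $Q$'s left input lies in $y$'s top boundary (the range $k<2m$) or in $x$'s (the range $k\geq 2m$). In the first range, $x$'s entire top boundary continues straight upward and the $y$-part of the diagram is exactly the $k$-th summand of $\widetilde\delta_Q(y)$; the full picture is the horizontal concatenation $(x\otimes 1^{op})\wedge\widetilde\delta_Q(y)$, which is precisely the left $A$-action $x\cdot\widetilde\delta_Q(y)$ on $A\boxtimes A$. In the second range, $y$'s entire top boundary (together with the rightmost $k-2m$ strings of $x$) bends over via the arc and terminates at the bottom of the enveloping box, while the $Q$-insertion occurs inside $x$. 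The key move is a planar isotopy that slides the $y$-box rightward along its bent-down top strings: this rotates $y$ by $180^\circ$ into its $op$-image and identifies the diagram with $\widetilde\delta_Q(x)\wedge(1\otimes y^{op})=\widetilde\delta_Q(x)\cdot y$. Summing the two ranges yields the Leibniz rule; the parity of $k$ (even or odd according as $Q\in\Phi_o$ or $Q\in\Phi_e$) is preserved under the split and matches the shading conventions imposed by the $A\otimes A^{op}$ embedding.

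For the $A\otimes A^{op}$-linearity of $Q\mapsto\widetilde\delta_Q$, the right $A\otimes A^{op}$-action on $\Phi$ stacks $a$ on $Q$'s free top strings and $b$ (suitably rotated) on its free bottom strings. Substituting into the definition of $\widetilde\delta$, the attached $a$ and $b$ slip outside the sum over $k$ and appear at the top and bottom of the output diagram, matching the right $A\otimes A^{op}$-action on $\Der(A,A\boxtimes A)$ inherited from the $A$-bimodule structure of $A\boxtimes A$. The main technical obstacle is the planar-isotopy in the second case: one must carefully track the positions of $x$, $y$, $Q$ and their free strings during the slide, and use the commutation relation $(a\otimes 1^{op})(1\otimes b^{op})=(1\otimes b^{op})(a\otimes 1^{op})$ inside $A\otimes A^{op}\subset A\boxtimes A$ to verify that the bent-around $y$ equals the bottom-attached $y^{op}$ as elements of $A\boxtimes A$ in the resulting diagram.
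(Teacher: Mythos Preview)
Your argument is correct and is exactly the diagrammatic check the paper has in mind—the paper's own proof is just the line ``the result is clear from drawing the relevant diagram,'' and you have supplied those details. One small correction: after the isotopy the rotated $y$ lands to the \emph{left} of the remaining bottom strands (the over-the-top arc reverses the order of the $k$ bent strings, so $y$'s strings, being rightmost on top, become leftmost on the bottom), giving $(1\otimes y^{op})\wedge\widetilde\delta_Q(x)$ rather than $\widetilde\delta_Q(x)\wedge(1\otimes y^{op})$; the commutation relation you invoke does not repair this, since $\widetilde\delta_Q(x)$ need not lie in $A\otimes A^{op}$. This does not affect your conclusion: with the natural $A$-bimodule structure on $A\boxtimes A$ coming from the \emph{left} $A\otimes A^{op}$-module structure, $(1\otimes y^{op})\wedge(-)$ is precisely the right action of $y$, so the Leibniz rule holds as claimed.
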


\begin{proof}
For the first statement, it suffices to consider $Q \in \Phi_o$ or $Q \in \Phi_e$.  In either case the result is clear from drawing the relevant diagram.  The second statement is obvious.
\end{proof}

Let $E_{M \otimes M^{op}}$ denote the unique trace-preserving conditional expectation from $M \boxtimes M$ onto $M \otimes M^{op}$, which is given by the orthogonal projection $L^2(M \boxtimes M) \to L^2(M \otimes M^{op})$.  We will now show that this projection may also be computed with respect to the orthogonal basis of Section \ref{sec:orthogonal}.   Let $\langle \langle \cdot, \cdot \rangle\rangle$ denote the inner product on $A \boxtimes A$ induced by $\delta^{-2k}$ times the restriction of $Tr'$.   

\begin{lemma}\label{oproj}
Let $P$ denote the orthogonal projection of $A \boxtimes A$ onto $A \otimes A^{op}$ with respect to $\langle\langle \cdot , \cdot \rangle \rangle$.  If $Q \in A \boxtimes A$ then
\begin{equation*}
 E_{M \otimes M^{op}}(Q) = P(Q).
\end{equation*}
 
\end{lemma}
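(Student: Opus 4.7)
My plan is to show that $P$ is a trace-preserving projection of $A \boxtimes A$ onto $A \otimes A^{op}$, so that by the uniqueness of the trace-preserving conditional expectation onto a subalgebra, $P$ agrees with $E_{M \otimes M^{op}}$.

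The first observation is that the inner product $\langle\langle \cdot, \cdot\rangle\rangle$ makes the decomposition $A \boxtimes A = \bigoplus_{s,t \geq 0} V^+_{0,0}(2s, 2t)$ orthogonal: since $Tr'$ is supported only on the $(s,t) = (0,0)$ components of $W$ and the $\star$-product cups off all boundary strings in a controlled fashion, $\langle\langle a, b\rangle\rangle$ vanishes unless $a$ and $b$ lie in the same $V^+_{0,0}(2s, 2t)$ summand. Since $A \otimes A^{op}$ is compatibly graded (with $(s,t)$-component the image of $P_s \otimes P_t^{op}$), the projection $P$ is block-diagonal: for $Q \in V^+_{0,0}(2s, 2t)$, one has $P(Q) \in P_s \otimes P_t^{op}$.

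The key step is then a diagrammatic identity: I claim there is a distinguished element $\omega_{s,t} \in P_s \otimes P_t^{op}$ --- namely the image in $A \otimes A^{op}$ of $1_s \otimes 1_t^{op}$, where $1_n \in P_n$ denotes the ``Markov identity'' consisting of $n$ parallel strings --- such that
\[
\tau(Q) = \langle\langle Q, \omega_{s,t}\rangle\rangle \quad \text{for all } Q \in V^+_{0,0}(2s, 2t).
\]
This identity matches the Voiculescu/Markov closure defining $\tau$ on $V^+_{0,0}(2s, 2t)$ (a sum over TL cap configurations on top and bottom of $Q$) with the specific nested-cup pattern produced by the $\star$-product against $\omega_{s,t}$; a bijection between TL matchings and the $(i,j)$-labelled cup configurations of $\star$ should make this explicit. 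Once the identity is established, trace-preservation of $P$ follows immediately: since $Q - P(Q)$ is $\langle\langle \cdot, \cdot\rangle\rangle$-orthogonal to $P_s \otimes P_t^{op}$ and in particular to $\omega_{s,t}$, we get $\tau(P(Q)) = \langle\langle P(Q), \omega_{s,t}\rangle\rangle = \langle\langle Q, \omega_{s,t}\rangle\rangle = \tau(Q)$. Summing over $(s,t)$ yields $\tau \circ P = \tau$ on all of $A \boxtimes A$, and by Tomiyama's theorem $P$ extends to a trace-preserving conditional expectation onto $M \otimes M^{op}$, which must equal $E_{M \otimes M^{op}}$ by uniqueness.

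The main obstacle is the diagrammatic identity $\tau(Q) = \langle\langle Q, \omega_{s,t}\rangle\rangle$ on each $V^+_{0,0}(2s, 2t)$-block. Establishing this requires a careful combinatorial matching of TL-closures with $\star$-cupping patterns, keeping track of the marked-point convention; the right-hand side should unfold via the $\star$-formula into a sum that precisely reproduces the closure sum on the left.
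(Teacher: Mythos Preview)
Your approach has a genuine gap at the final step. Showing that $\tau\circ P = \tau$ on $A\boxtimes A$ is \emph{not} sufficient to conclude $P = E_{M\otimes M^{op}}$. The uniqueness statement you invoke is: among \emph{conditional expectations} onto $M\otimes M^{op}$, the trace-preserving one is unique. But you have not shown that $P$ is a conditional expectation, i.e.\ an $A\otimes A^{op}$-bimodule map. Tomiyama's theorem would give this if you knew $\|P\|=1$ in the operator (C$^*$-)norm, but nothing you have done addresses that; $P$ is only defined as an orthogonal projection for the Hilbert-space structure $\langle\langle\cdot,\cdot\rangle\rangle$, which is a different inner product from the GNS one. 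Concretely, from $\tau(Q)=\tau(P(Q))$ you cannot deduce $\tau\bigl((a\otimes b)^\dagger Q\bigr)=\tau\bigl((a\otimes b)^\dagger P(Q)\bigr)$ unless you already know $P\bigl((a\otimes b)^\dagger Q\bigr)=(a\otimes b)^\dagger P(Q)$, which is exactly the missing bimodule property.

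There is also a problem with the proposed identity $\tau(Q)=\langle\langle Q,\,1_s\otimes 1_t^{op}\rangle\rangle$. On $V_{0,0}^+(2s,2t)$ the pairing $\langle\langle Q,\omega\rangle\rangle$ picks out only the \emph{single} term in $\omega^\dagger\star Q$ where all strings are capped, so pairing against $1_s\otimes 1_t^{op}$ closes $Q$ against one fixed Temperley--Lieb pattern (nested cups), not the full $\sum TL$ that defines $\tau$. The correct statement is that for each $a\otimes b\in A\otimes A^{op}$ the functional $Q\mapsto\langle Q,a\otimes b\rangle$ can be written as $\langle\langle Q,\phi(a,b)\rangle\rangle$ for some $\phi(a,b)\in A\otimes A^{op}$ (with $\phi$ absorbing the $\sum TL$ closures); this immediately gives $\langle Q-P(Q),a\otimes b\rangle=0$ for \emph{every} $a\otimes b$, hence $P(Q)=E_{M\otimes M^{op}}(Q)$. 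That is precisely the paper's argument, carried out diagrammatically in one line. The fix to your approach is therefore not to aim at $\tau\circ P=\tau$, but to establish this pairing identity for arbitrary $a\otimes b$ rather than just $1\otimes 1$.
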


\begin{proof}
If $Q \in A \boxtimes A$ and $a \otimes b \in A \otimes A^{op}$ we have 
\begin{equation*}
\begin{tikzpicture}[yscale=.75,xscale=.75]
\draw[Box] (0,0) rectangle (2,1); \node at (1,.5) {$Q$};
\draw[Box] (3,0) rectangle (5,.4); \node[scale=.8] at (4,.2) {$b^*$};
\draw[Box] (3,.6) rectangle (5,1); \node[scale=.8] at (4,.8) {$a^*$};
\draw[Box] (3,1.25) rectangle (5,1.75); \node[scale=.75] at (4,1.5) {$\sum TL$}; \draw[verythickline] (4,1) -- (4,1.25);
\draw[Box] (3,-.25) rectangle (5,-.75); \node[scale=.75] at (4,-.5) {$\sum TL$}; \draw[verythickline] (4,0) -- (4,-.25);
\draw[thickline] (1,1) -- (1,1.75) arc(180:90:.25cm) -- (3.75,2) arc (90:0:.25cm);
\draw[thickline] (1,0) -- (1,-.75) arc(180:270:.25cm) -- (3.75,-1) arc (270:360:.25cm);
\node[left] at (0,.5) {$\langle Q, a \otimes b \rangle = $};
\begin{scope}[xshift=6cm]
\draw[Box] (0,0) rectangle (2,1); \node at (1,.5) {$P(Q)$};
\draw[Box] (3,0) rectangle (5,.4); \node[scale=.8] at (4,.2) {$b^*$};
\draw[Box] (3,.6) rectangle (5,1); \node[scale=.8] at (4,.8) {$a^*$};
\draw[Box] (3,1.25) rectangle (5,1.75); \node[scale=.75] at (4,1.5) {$\sum TL$}; \draw[verythickline] (4,1) -- (4,1.25);
\draw[Box] (3,-.25) rectangle (5,-.75); \node[scale=.75] at (4,-.5) {$\sum TL$}; \draw[verythickline] (4,0) -- (4,-.25);
\draw[thickline] (1,1) -- (1,1.75) arc(180:90:.25cm) -- (3.75,2) arc (90:0:.25cm);
\draw[thickline] (1,0) -- (1,-.75) arc(180:270:.25cm) -- (3.75,-1) arc (270:360:.25cm);
\node[left=1mm] at (0,.5) {$=$}; \node[right] at (5,.5) {$= \langle P(Q), a \otimes b \rangle,$};
\end{scope}
\end{tikzpicture}
\end{equation*}
so that $P(Q)$ is equal to the orthogonal projection of $Q$ onto $M \otimes M^{op}$ in $L^2(\tau_k \boxtimes \tau_k)$.
\end{proof}

\begin{corollary}\hfill\label{proj}
\begin{enumerate}
 \item $E_{M \otimes M^{op}}(V_{0,0}^+(2s,2t)) \subset P_{s} \otimes P_t$, in particular $E_{M \otimes M^{op}}(A \boxtimes A) \subset A \otimes A^{op}$.
\item Suppose that $Q \in A \boxtimes A \cap (A \otimes A^{op})^\perp$.  Write 
\begin{equation*}
Q = \sum_{s,t \geq 0} Q_{s,t},
\end{equation*}
where $Q_{s,t} \in V^+_{0,0}(2s,2t)$ are nonzero for finitely many $s,t \geq 0$.  Then $Q_{s,t} \in (A \otimes A^{op})^\perp$ for all $s,t \geq 0$.
\item If $Q \in V^+_{0,0}(2s,2t) \cap (A \otimes A^{op})^\perp$, then
\begin{equation*}
 \begin{tikzpicture}[scale=.5]
 \draw[Box] (0,0) rectangle (2,1); \node at (1,.5) {$Q$};
 \draw[verythickline] (1,0) -- (1,-.25); \draw[medthick] (.5,1) arc (180:0:.5cm);
 \node[right] at (2.2,.5) {$ = 0.$};
 \end{tikzpicture}
\end{equation*}
\end{enumerate}
\end{corollary}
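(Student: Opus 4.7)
The plan is to derive all three parts from Lemma \ref{oproj}, which identifies the conditional expectation $E_{M \otimes M^{op}}$ with the orthogonal projection $P$ for the inner product $\langle\langle \cdot,\cdot \rangle\rangle$ coming from $Tr'$. The main preliminary observation is that the spaces $V_{0,0}^+(2s,2t)$ are pairwise orthogonal under $\langle\langle \cdot,\cdot\rangle\rangle$. Indeed, $\langle\langle Q_1, Q_2\rangle\rangle$ is $Tr'$ applied to $Q_2^\dagger \star Q_1$ in $W$, and since $Tr'$ vanishes outside $W_{0,0}^\pm(0,0)$, every top and bottom strand of $Q_1$ must be paired with one of $Q_2^\dagger$ by the top/bottom arcs of the $\star$-product; reading off the resulting strand count forces $s = s'$ and $t = t'$.

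For (1), the above orthogonality implies that $P$ preserves each $V_{0,0}^+(2s,2t)$. Since the embedding $A \otimes A^{op} \hookrightarrow A \boxtimes A$ from Section \ref{sec:enveloping} gives $V_{0,0}^+(2s,2t) \cap (A \otimes A^{op}) = P_s \otimes P_t$, Lemma \ref{oproj} immediately yields $E_{M \otimes M^{op}}(V_{0,0}^+(2s,2t)) \subset P_s \otimes P_t$, from which the global inclusion $E_{M \otimes M^{op}}(A \boxtimes A) \subset A \otimes A^{op}$ follows by linearity.

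For (2), decompose $Q = \sum_{s,t} Q_{s,t}$ as a finite sum and apply part (1): $E_{M \otimes M^{op}}(Q) = \sum_{s,t} E_{M \otimes M^{op}}(Q_{s,t})$ with each summand in $P_s \otimes P_t$. Since the subspaces $P_s \otimes P_t$ sit in distinct graded summands of $A \otimes A^{op}$ they are algebraically independent, so the vanishing of the sum forces each $E_{M \otimes M^{op}}(Q_{s,t})$ to vanish, whence every $Q_{s,t} \in (A \otimes A^{op})^\perp$.

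For (3), let $C \colon V_{0,0}^+(2s,2t) \to V_{0,0}^+(0,2t) = P_t$ denote the linear map depicted, which closes off the $2s$ top strands of $Q$ via a rainbow of $s$ nested caps. The plan is to identify the $\langle\langle \cdot,\cdot\rangle\rangle$-adjoint of $C$: letting $r_s \in P_s$ be the rainbow element (the diagram of $s$ nested cups), a direct diagrammatic comparison of the pairings $\langle\langle C(Q), c\rangle\rangle_{P_t}$ and $\langle\langle Q, r_s \otimes c^{op}\rangle\rangle_{V_{0,0}^+(2s,2t)}$ shows they coincide, so $C^\dagger(c) = r_s \otimes c^{op}$. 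In particular the range of $C^\dagger$ lies in $P_s \otimes P_t$, so $\ker(C) \supset (P_s \otimes P_t)^\perp$ in $\langle\langle \cdot,\cdot\rangle\rangle$. Combining with part (1) and Lemma \ref{oproj}, any $Q \in V_{0,0}^+(2s,2t) \cap (A \otimes A^{op})^\perp$ is already orthogonal to $P_s \otimes P_t$ under $\langle\langle \cdot,\cdot\rangle\rangle$, and hence $C(Q) = 0$. The main technical obstacle is verifying the adjoint identity: one must carefully trace through how the external top arcs in the $\star$-product pairing $\langle\langle Q, r_s \otimes c^{op}\rangle\rangle$ compose with the internal nested-cap structure of $r_s$ to produce exactly the rainbow-cap closure of $Q$'s top strands.
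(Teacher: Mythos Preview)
Your proof is correct and follows essentially the same route as the paper. Both arguments rest entirely on Lemma~\ref{oproj} and the $\langle\langle\cdot,\cdot\rangle\rangle$-orthogonality of the graded pieces $V_{0,0}^+(2s,2t)$; for (3) the paper pairs the capped diagram with an arbitrary $x\in P_t$ and observes that this pairing can be rewritten with $P(Q)$ in place of $Q$ (hence vanishes), which is exactly your adjoint computation $C^\dagger(c)=r_s\otimes c^{op}$ read in the other direction. Your phrasing via the adjoint is a touch more explicit than the paper's purely pictorial step, but the content is identical.
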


\begin{proof}
(1) and (2) are immediate from the previous lemma.  Let $Q$ be as in (3), then if $x \in P_{t}$ we have
\begin{equation*}
 \begin{tikzpicture}[yscale=.75,xscale=.5]
 \draw[Box] (0,0) rectangle (2,1); \node at (1,.5) {$Q$};
 \draw[medthick] (.5,1) arc (180:0:.5cm and .25cm);
 \draw[Box] (3,0) rectangle (5,.4); \node[flip] at (4,.2) {$x$};
 \draw[thickline] (1,0) arc(180:270:.25cm) -- (3.75,-.25) arc (-90:0:.25cm);
 \node[right] at (5.25,.5) {$=$};
\begin{scope}[xshift=6.5cm]
  \draw[Box] (0,0) rectangle (2,1); \node at (1,.5) {$P(Q)$};
 \draw[Box] (3,0) rectangle (5,.4); \node[flip] at (4,.2) {$x$};
 \draw[medthick] (1,0) arc(180:270:.25cm) -- (3.75,-.25) arc (-90:0:.25cm);
 \draw[medthick] (.5,1) arc(180:90:.5cm) -- (4.25,1.5) arc (90:0:.25cm) --(4.5,1) arc(0:-180:.5cm and .25cm) arc (0:90:.25cm) -- (1.75,1.25) arc (90:180:.25cm);
 \draw[Box,dashed] (3,.6) rectangle (5,1); \node[right] at (5,.5) {$ = 0,$};
\end{scope}
 \end{tikzpicture}
\end{equation*}
from which the result follows.
\end{proof}

Given $Q \in \Phi$, define $\delta_Q = E_{M \otimes M^{op}} \circ \widetilde \delta_Q$.  Note that the range of $\delta_Q$ is contained in $A \otimes A^{op}$ by the previous corollary.  Since $E_{M \otimes M^{op}}$ commutes with the left and right actions of $A \otimes A^{op}$, it is clear that $\delta_{Q} \in \Der(A,A \otimes A^{op})$ and that the assignment $Q \mapsto \delta_Q$ is $A \otimes A^{op}$-linear.  

We will now determine the kernel of the mapping $Q \mapsto \delta_Q$.  Define 
\begin{equation*}
\Omega_o = \bigoplus_{s,t \geq 0} V_{0,0}^- (2s+1,2t+1), \qquad \Omega_e = \bigoplus_{s,t \geq 0} V_{0,0}^+(2s,2t), \qquad \Omega = \Omega_o \oplus \Omega_e.
\end{equation*}
Observe that $\Omega_e = A \boxtimes A$, in particular $A \otimes A^{op} \subset \Omega$.  The orthogonal complement of $A \otimes A^{op}$ in $\Omega$ is $\Omega_o \oplus (A \otimes A^{op})^\perp \cap A \boxtimes A$.

Given $R \in \Omega_o$, define $\rho(R) \in \Phi$ by
\begin{equation*}
\begin{tikzpicture}
\draw[Box] (0,0) rectangle (1.5,1); \node at (.75,.5) {$R$};
\draw[verythickline] (.75,0) -- (.75,-.25); \draw[verythickline] (.75,1) -- (.75,1.25);
\draw[thick] (-.75,.5) -- (-.5,.5) arc (-90:0:.25cm) -- (-.25,1.25);
\node at (2.25,.5) {$-$};
\draw[Box] (3.5,0) rectangle (5,1); \node at (4.25,.5) {$R$};
\draw[thick] (2.75,.5) -- (3,.5) arc (90:0:.25cm) -- (3.25,-.25);
\draw[verythickline] (4.25,0) -- (4.25,-.25); \draw[verythickline] (4.25,1) -- (4.25,1.25);
\node[left] at (-1,.5) {$\rho(R) = $};
\end{tikzpicture}
\end{equation*}
Likewise if $R \in \Omega_e$ we define $\rho(R) \in \Phi$ by the same formula, but note that the shading of the ``corners'' must be reversed.  We then extend $\rho$ to $\Omega$ linearly.

\begin{proposition}
The restriction of $\rho$ to $\Omega \cap (A \otimes A^{op})^\perp$ is an $A \otimes A^{op}$-linear isomorphism onto the kernel of the mapping $Q \mapsto \delta_Q$.  
\end{proposition}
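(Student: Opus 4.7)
The plan is to verify four things in sequence: $A \otimes A^{op}$-linearity, the containment of the image in the kernel, injectivity on $\Omega \cap (A \otimes A^{op})^\perp$, and surjectivity onto the kernel.

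The $A \otimes A^{op}$-linearity of $\rho$ is immediate from its diagrammatic definition, since the extra thin strand added by $\rho$ is attached in a region disjoint from where the right $A \otimes A^{op}$-action inserts its elements. To see that the image lies in the kernel, I would compute $\widetilde\delta_{\rho(R)}(x)$ for $x \in P_n$ by expanding $\rho(R)$ as a difference of its two summand diagrams and applying the definition of $\widetilde\delta$ to each. A telescoping identity in the summation index $k$ between the two resulting sums cancels all interior terms, leaving only boundary contributions at the extreme values of $k$. These surviving diagrams each feature an outermost cap on the top or on the bottom of the $\rho(R)$-box; by Corollary \ref{proj}(3) such elements lie in $A \boxtimes A \cap (A \otimes A^{op})^\perp$, and Lemma \ref{oproj} identifies $E_{M \otimes M^{op}}$ with the orthogonal projection onto $A \otimes A^{op}$, so these boundary contributions are killed, giving $\delta_{\rho(R)} = 0$.

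For injectivity I would exhibit a partial left inverse $\sigma : \Phi \to \Omega$, obtained by closing the thin left strand of an element of $\Phi$ either to the top or to the bottom (with the sign convention mirroring that in the definition of $\rho$). A routine diagrammatic check shows that $\sigma \circ \rho$ is the orthogonal projection of $\Omega$ onto $\Omega \cap (A \otimes A^{op})^\perp$, and in particular is the identity on $\Omega \cap (A \otimes A^{op})^\perp$, from which the injectivity follows.

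The main step, and the principal obstacle, is surjectivity onto $\ker(Q \mapsto \delta_Q)$. Given $Q \in \Phi$ with $\delta_Q = 0$, the natural candidate is $R = \sigma(Q) \in \Omega \cap (A \otimes A^{op})^\perp$. Setting $Q' = Q - \rho(R)$, which lies in $\ker \sigma$ by the previous step, it remains to show that $Q' \in \ker \sigma \cap \ker(Q \mapsto \delta_Q)$ forces $Q' = 0$. The vanishing of $\delta_{Q'}$ expands, via the definition of $\widetilde\delta$ and Lemma \ref{oproj}, into a family of planar-algebra identities indexed by $x \in A$. Faithfulness of the Voiculescu trace (which is built into the subfactor planar algebra axioms) then translates these identities into equations that, combined with the structural constraint $\sigma(Q') = 0$, force the individual graded pieces of $Q'$ to vanish. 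Simultaneously keeping track of the Temperley-Lieb closures appearing in $\widetilde\delta_{Q'}$, in the orthogonal projection of Lemma \ref{oproj}, and in the closing operation defining $\rho$ will be the most delicate part of the computation, and I expect it to require reducing to a skein-theoretic identity internal to $\mc P$.
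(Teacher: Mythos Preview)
Your outline has the right overall shape, but there are two genuine gaps.

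First, in showing that the image of $\rho$ lies in the kernel, you treat $\Omega_o$ and $\Omega_e$ uniformly via a telescoping argument with boundary terms killed by Corollary~\ref{proj}(3). This misses the actual mechanism in the $\Omega_e$ case. For $R \in \Omega_e = A \boxtimes A$, the telescoping does not leave ``cap'' boundary terms; rather one finds exactly $\widetilde\delta_{\rho(R)}(x) = [x,R]$, a commutator in $A \boxtimes A$. This is nonzero in general, and its image under $E_{M \otimes M^{op}}$ is $[x, E_{M\otimes M^{op}}(R)]$. One then needs the hypothesis $R \perp A \otimes A^{op}$ to get $E_{M\otimes M^{op}}(R)=0$, and conversely one needs that $M$ is diffuse (so a Hilbert--Schmidt operator commuting with $A$ vanishes) to see that $\delta_{\rho(R)}=0$ forces $R \perp A \otimes A^{op}$. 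Your invocation of Corollary~\ref{proj}(3) is also in the wrong direction: that corollary assumes an element is already in $(A\otimes A^{op})^\perp$ and concludes a cap closure vanishes, not the reverse.

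Second, and more seriously, your surjectivity argument is only a hope, not a proof. The paper's argument is concrete: given $Q=\sum Q_{s,t}$ with $\delta_Q=0$, one writes down an explicit diagrammatic formula for candidates $R_{s,t}$ (a sum over partial closures, not merely bending the single left strand as in your $\sigma$), and then proves $Q_{s,t}$ equals the $(s,t)$-component of $\rho(R)$ by induction on $s$. The base case $s=0$ comes from evaluating $\delta_Q(\cup)=0$ and using Corollary~\ref{proj}(2). The inductive step evaluates $\delta_Q$ on a specific cap-type Temperley--Lieb element, applies Corollary~\ref{proj}(2) to isolate a single bigraded component, uses the inductive hypothesis to simplify, and finally applies Corollary~\ref{proj}(3) followed by a rotation to extract the desired identity. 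Your proposed reduction ``$Q' \in \ker\sigma \cap \ker(Q\mapsto\delta_Q)$ forces $Q'=0$ by faithfulness of the trace'' does not supply this mechanism; in particular, your $\sigma$ does not obviously recover the paper's $R_{s,t}$, and the claim that $\sigma\circ\rho$ is the orthogonal projection onto $(A\otimes A^{op})^\perp$ would itself need justification.
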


\begin{proof}
If $R \in \Omega_o$, then it is immediate from the defining formula that $\widetilde \delta_{\rho(R)} = 0$.  On the other hand, if $R \in \Omega_e$ then it is easy to see that $\widetilde \delta_{\rho(R)}(x) = [x,R]$, and therefore $\delta_{\rho(R)}(x) = [x,E_{M \otimes M^{op}}(R)]$.  So we have $\delta_{\rho(R)} = 0$ if $R \perp A \otimes A^{op}$.  In fact since $M$ is diffuse, any Hilbert-Schmidt operator commuting with $A$ must be zero, and so we have $\delta_{\rho(R)} = 0$ if and only if $R \perp A \otimes A^{op}$.

It remains to show that if $Q \in \Phi$ is such that $\delta_Q = 0$, then $Q = \rho(R)$ for a unique $R \in \Omega$.  Write
\begin{equation*}
 Q = \sum_{s,t \geq 0} Q_{s,t}
\end{equation*}
where $Q_{s,t} \in V_{1,0}^{(-1)^s}(s,t)$ are nonzero for finitely many $s,t$.  For $s,t \geq 1$ with $s + t$ even, define $R_{s,t} \in V_{0,0}^{(-1)^s}(s,t)$ by
\begin{equation*}
\begin{tikzpicture}[yscale=.75]
 \draw[Box] (0,0) rectangle (2,1); \node at (1,.5) {$Q_{s+k,t+1-k}$};
 \draw[thickline] (.25,1) arc (180:0:.5cm); \node[draw,fill=white,scale=.75,circle] at (.75,1.5) {$k$};
 \draw[thickline] (1.75,1) -- (1.75,1.75);
 \draw[thick] (0,.5) arc (270:180:.25cm) -- (-.25,1.75);
 \draw[verythickline] (-.75,1.75) -- (-.75,-.5); \node[draw,fill=white,rectangle, rounded corners,scale=.75] at (-.75,.3) {$k-1$};
 \draw[verythickline] (1,0) -- (1,-.5); 
 \node[left] at (-1.2,.6) {$R_{s,t} = \displaystyle\sum_{k=1}^{\min\{s,t+1\}}$};
\end{tikzpicture}
\end{equation*}
Note that $R_{s,t} = 0$ for all but finitely many $s,t$, and set $R = \sum_{s,t} R_{s,t} \in \Omega$.  We claim that $Q = \rho(R)$, note that it will then follow from the discussion above that $R \in (A \otimes A^{op})^\perp$.

We need to show that for any $s,t \geq 0$ we have
\begin{equation*}
\begin{tikzpicture}
\draw[Box] (0,0) rectangle (1.5,1); \node at (.75,.5) {$R_{s-1,t}$};
\draw[verythickline] (.75,0) -- (.75,-.25); \draw[verythickline] (.75,1) -- (.75,1.25);
\draw[thick] (-.75,.5) -- (-.5,.5) arc (-90:0:.25cm) -- (-.25,1.25);
\node at (2.25,.5) {$-$};
\draw[Box] (3.5,0) rectangle (5,1); \node at (4.25,.5) {$R_{s,t-1}$};
\draw[thick] (2.75,.5) -- (3,.5) arc (90:0:.25cm) -- (3.25,-.25);
\draw[verythickline] (4.25,0) -- (4.25,-.25); \draw[verythickline] (4.25,1) -- (4.25,1.25);
\node[left] at (-1,.5) {$Q_{s,t} = $};
\end{tikzpicture}
\end{equation*}
We will prove this by induction on $s$.  First suppose that $s = 0$, we claim that $Q_{s,t} = 0$ for any $t \geq 0$.  Indeed, this follows from $\delta_Q(\cup) = 0$ and (2) of Corollary \ref{proj}.

So assume that $s \geq 1$.  Let $x$ be the following Temperley-Lieb element with $2(s+1)$ boundary points:
\begin{equation*}
\begin{tikzpicture}
\draw[Box] (0,0) rectangle (1,.5);
\draw[thickline] (.25,.5) arc(180:360:.25cm and .3cm); \node[left=2mm] at (0,.25) {$x = $};
\end{tikzpicture}
\end{equation*}
Since $\delta_Q(x) = 0$, applying (2) of Corollary \ref{proj} we find that
\begin{equation*}
 \begin{tikzpicture}
  \draw[Box] (0,0) rectangle (2,1); \node[scale=1] at (1,.5) {$Q_{s-k,t+k}$};
  \draw[verythickline] (1,1) -- (1,1.25); \draw[verythickline] (1,0) -- (1,-.25);
  \draw[thickline] (-1.25,1.25) arc (180:360:.5cm); \node[draw,fill=white,count,rectangle,rounded corners,scale=.7] at (-.75,.75) {$k-1$};
  \draw[verythickline] (-1.75,1.25) -- (-1.75,-.25); 
  \draw[thick] (0,.5) -- (-1.25,.5) arc(270:180:.25cm) -- (-1.5,1.25);
   \node[left] at (-2.25,.5) {$\displaystyle\sum_{k=1}^{s}$};
\begin{scope}[xshift=6.25cm]
\draw[Box] (0,0) rectangle (2,1); \node[scale=1] at (1,.5) {$Q_{s+k,t-k}$};
  \draw[verythickline] (1,1) -- (1,1.25); \draw[verythickline] (1,0) -- (1,-.25);
  \draw[thickline] (-1.25,-.25) arc (180:0:.5cm); \node[draw,fill=white,count,rectangle,rounded corners,scale=.8] at (-.75,.25) {$k$};
  \draw[verythickline] (-1.75,1.25) -- (-1.75,-.25);
  \draw[thick] (0,.5) -- (-1.25,.5) arc(90:180:.25cm) -- (-1.5,-.25);
 \node[left] at (-2.25,.5) {$+ \displaystyle\sum_{k=0}^{\min\{s,t\}}$};
 \node[right] at (2,.5) {$\in (A \otimes A^{op})^\perp$,};
\end{scope}
 \end{tikzpicture}
\end{equation*}
where the number of parallel lines appearing at left is determined by the condition that each term is in $V_{0,0}^+(2s,t+s+1)$.

Applying induction to the first sum and simplifying, we are left with
\begin{equation*}
\begin{tikzpicture}
\draw[Box] (0,0) rectangle (2,1); \node[scale=1] at (1,.5) {$Q_{s+k,t-k}$};
  \draw[verythickline] (1,1) -- (1,1.25); \draw[verythickline] (1,0) -- (1,-.25);
  \draw[thickline] (-1.25,-.25) arc (180:0:.5cm); \node[draw,fill=white,count,rectangle,rounded corners,scale=.8] at (-.75,.25) {$k$};
  \draw[verythickline] (-1.75,1.25) -- (-1.75,-.25);
  \draw[thick] (0,.5) -- (-1.25,.5) arc(90:180:.25cm) -- (-1.5,-.25);
 \node[left] at (-2.25,.5) {$\displaystyle\sum_{k=0}^{\min\{s,t\}}$};
 \begin{scope}[xshift=4.5cm]
  \draw[Box] (0,0) rectangle (2,1); \node at (1,.5) {$R_{s-1,t}$};
  \draw[verythickline] (1,1) -- (1,1.25); \draw[verythickline] (1,0) -- (1,-.25);
 \draw[verythickline] (-.75,1.25) -- node[draw,fill=white,count,rectangle,rounded corners,scale=.8] {$s+1$} (-.75,-.25);
 \node[left] at (-1.5,.5) {$-$}; 
\node[right] at (2,.5) {$\in (A \otimes A^{op})^\perp.$};
 \end{scope}
\end{tikzpicture}
\end{equation*}
By (3) of Corollary \ref{proj}, we have
\begin{equation*}
\begin{tikzpicture}
\begin{scope}[xshift=-6.5cm]
  \draw[Box] (0,0) rectangle (2,1); \node at (1,.5) {$Q_{s,t}$};
  \draw[verythickline] (1,0) -- (1,-.25);
 \draw[verythickline] (-1,-.25) -- (-1,1) arc(180:90:.25cm) -- (.75,1.25) arc (90:0:.25cm); 
\draw[thick] (-.5,-.25) arc (180:90:.5cm and .75cm);
\node[right] at (2.25,.5) {$+$};
\end{scope}
\draw[Box] (0,0) rectangle (2,1); \node[scale=1] at (1,.5) {$Q_{s+k,t-k}$};
  \draw[verythickline] (1,0) -- (1,-.25);
  \draw[thickline] (-1.25,-.25) arc (180:0:.5cm); \node[draw,fill=white,count,rectangle,rounded corners,scale=.8] at (-.75,.25) {$k$};
  \draw[verythickline] (-1.75,-.25) -- (-1.75,1.5) arc (180:90:.25cm) -- (1.35,1.75) arc (90:0:.25cm) -- (1.6,1);
  \draw[thickline] (.4,1) arc(180:0:.4cm); \node[draw,fill=white,count,rectangle,rounded corners,scale=.9] at (.8,1.4) {$k$};
  \draw[thick] (0,.5) -- (-1.25,.5) arc(90:180:.25cm) -- (-1.5,-.25);
 \node[left] at (-2.25,.5) {$\displaystyle\sum_{k=1}^{\min\{s,t\}}$};
 \begin{scope}[xshift=4.5cm]
  \draw[Box] (0,0) rectangle (2,1); \node at (1,.5) {$R_{s-1,t}$};
  \draw[verythickline] (1,0) -- (1,-.25);
 \draw[verythickline] (-1,-.25) -- node[draw,fill=white,count,rectangle,rounded corners,scale=.8] {$s-1$} (-1,1) arc(180:90:.25cm) -- (.75,1.25) arc (90:0:.25cm); \draw[thick] (-.75,-.25) arc (180:0:.35cm);
 \node[left] at (-1.5,.5) {$-$}; 
\node[right] at (2,.5) {$= 0.$};
 \end{scope}
\end{tikzpicture}
\end{equation*}
After rotating $s$ strings clockwise from bottom to top, and one more from the bottom to the left, we obtain the desired result.

Finally, if $Q = \rho(R')$ and $R$ is defined in terms of $Q$ as above, it is straightforward to check that $R = R'$, which proves that $\rho$ is injective.  Since $\rho$ is clearly linear over $A \otimes A^{op}$, this completes the proof.
\end{proof}

We now compute the ``conjugate variable'' $\delta_Q^*(1 \otimes 1)$.  This is closely related to the Schwinger-Dyson equation from \cite[Lemma 12]{gjsz}.

\begin{proposition}
Let $Q \in V_{1,0}^{(-1)^s}(s,t)$, and view $\delta_Q$ as a densely defined unbounded operator $\delta_Q:L^2(M) \to L^2(M \otimes M^{op})$.  Then $1 \otimes 1 \in \mathfrak D(\delta_Q^*)$, and
\begin{equation*}
 \begin{tikzpicture}[xscale=.75,yscale=.6]
\draw[Box] (0,0) rectangle (2,1); \node at (1,.5) {$Q$};
\draw[verythickline] (1,1) -- (1,2); 
\draw[thick] (0,.5) arc(270:180:.25cm) -- (-.25,2);
\draw[thickline] (1,0) -- (1,-.5) arc(0:-90:.25cm) -- (-.25,-.75) arc (270:180:.25cm) -- (-.5,2);
\node[left] at (-1,.5) {$(\delta_Q^*(1 \otimes 1))^* = $};
\begin{scope}[xshift=5.5cm]
\draw[Box] (0,0) rectangle (2.5,1); \node at (1.25,.5) {$Q$};
\draw[thick] (0,.5) arc(270:180:.25cm) -- (-.25,1.75) arc (180:90:.25cm) -- (1.25,2) arc(90:0:.25cm) -- (1.5,1);
\draw[Box] (.25,1.25) rectangle (1.25,1.75); \node[scale=.5] at (.75,1.5) {$\sum TL$}; 
\draw[thickline] (2,1) -- node[count,scale=.7] {$k$} (2,2); \draw[verythickline] (.75,1) -- (.75,1.25);
\draw[thickline] (1.25,0) -- (1.25,-.5) arc(0:-90:.25cm) -- (-.25,-.75) arc (270:180:.25cm) -- (-.5,2);
\node[left] at (-1,.25) {$- \displaystyle \sum_{\substack{0 \leq k \leq s\\ k +t \in 2\Z}}$};
\end{scope}
\begin{scope}[xshift=11.5cm,yshift=.5cm]
\draw[Box] (0,0) rectangle (2.5,1); \node at (1.25,.5) {$Q$};
\draw[thick] (0,.5) arc(90:180:.25cm) -- (-.25,-.75) arc (180:270:.25cm) -- (1.25,-1) arc(-90:0:.25cm) -- (1.5,0);
\draw[Box] (.25,-.75) rectangle (1.25,-.25); \node[scale=.5] at (.75,-.5) {$\sum TL$}; 
\draw[thickline] (2,0) -- (2,-1) arc(0:-90:.25cm) -- (-.25,-1.25) arc (270:180:.25cm) -- node[count,near end,scale=.7] {$k$} (-.5,1.5); \draw[verythickline] (.75,0) -- (.75,-.25); \draw[verythickline] (1.25,1) -- (1.25,1.5);
\node[left] at (-1,-.25) {$-\displaystyle \sum_{\substack{0 \leq k \leq t\\ k +s \in 2 \Z}}$};
\end{scope}
\end{tikzpicture}
\end{equation*}
\end{proposition}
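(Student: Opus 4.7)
The plan is to verify the formula by directly computing $\langle\delta_Q(x),1\otimes 1\rangle$ for $x$ in the dense subalgebra $A$, exhibiting in the process an explicit element $\eta\in L^2(M)$ whose adjoint $\eta^*$ equals the stated right-hand side. Since $\eta^*$ will be a finite sum of graded components in $\bigoplus_m P_m\subset L^2(M)$, the assertions $1\otimes 1\in\mathfrak{D}(\delta_Q^*)$ and $\delta_Q^*(1\otimes 1)=\eta$ will follow automatically from this identity.

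For the computation, I would first reduce the LHS using that $E_{M\otimes M^{op}}$ is trace-preserving and that $\tau\otimes\tau^{op}$ on $M\otimes M^{op}$ agrees with $\tau_0\boxtimes\tau_0=Tr$ on $A\otimes A^{op}$:
\[
\langle\delta_Q(x),1\otimes 1\rangle=(\tau\otimes\tau^{op})(E_{M\otimes M^{op}}\widetilde\delta_Q(x))=Tr(\widetilde\delta_Q(x)).
\]
Expanding via the diagrammatic definitions of $Tr$ and $\widetilde\delta_Q$, this becomes a parity-restricted sum over $k$ of closed diagrams, each joining the $x$-box to the $Q$-box and capped on top and bottom by $\sum TL$. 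For each such closed diagram, planar isotopy lets me pull the $x$-box out and rewrite the trace as $\tau_0(x\wedge S_k)$ for a concrete $S_k\in A$; summing over admissible $k$ then identifies $\eta^*=\sum_k S_k$.

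The most delicate step is matching this sum to the three-term expression in the statement. The strategy is to use planar rearrangement to exhibit $\eta^*$ as a ``full rotation'' of $Q$, bringing all its boundary strings to the top (the first term of the formula), minus two $\sum TL$ correction terms that account for the parity restriction on $k$ in the definition of $\widetilde\delta_Q$. These two corrections split naturally into a family whose forbidden TL-cups lie above $Q$ (the second term, with constraint $k+t\in 2\Z$) and one whose forbidden cups lie below $Q$ (the third term, with constraint $k+s\in 2\Z$). The principal obstacle is the planar bookkeeping required for this splitting: verifying that the two wrong-parity families are disjoint and exhaustive, and that the shadings induced by $(-1)^s$ (and the overall constraint $s+t$ odd built into $\Phi$) come out consistently. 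Once this bookkeeping is completed, the three-term formula follows.
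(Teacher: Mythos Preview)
Your overall plan—reduce $\langle\delta_Q(x),1\otimes 1\rangle$ to $(\tau_0\boxtimes\tau_0)(\widetilde\delta_Q(x))$ and recognise the result as $\tau_0(x\wedge_0\eta^*)$ for an explicit $\eta\in A\subset L^2(M)$—is exactly what the paper does, only run in the opposite direction: the paper starts from the claimed three-term $\eta^*$, computes $\tau_0(x\wedge_0\eta^*)$, and matches it back to $(\tau_0\boxtimes\tau_0)(\widetilde\delta_Q(x))$.

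Where your sketch goes astray is in the combinatorial identification of the three terms. The two correction sums do \emph{not} compensate for ``wrong-parity'' values of $k$ in $\widetilde\delta_Q$. Indeed, once the distinguished left string of $Q$ is paired with some string of $x$, planarity already forces the correct parity of $k$ automatically, so there are no wrong-parity contributions to subtract. The actual split, and the one the paper uses, is according to where that single left string of $Q$ lands under the big $\sum TL$ closure in $\tau_0(x\wedge_0(\text{first term}))$: it goes either to a string of $x$, or to one of the $s$ top strings of $Q$, or (via the wraparound) to one of the $t$ bottom strings of $Q$. The first family is precisely $(\tau_0\boxtimes\tau_0)(\widetilde\delta_Q(x))$; the second and third families are exactly what the second and third terms of the formula subtract, with the index $k$ counting how many top (resp.\ bottom) strings of $Q$ remain free after the left string has been capped to a top (resp.\ bottom) string and the small $\sum TL$ box has closed the rest. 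The parity constraints $k+t\in2\Z$ and $k+s\in2\Z$ then come simply from the evenness of those small $\sum TL$ closures, not from any forbidden values of $k$ in $\widetilde\delta_Q$.

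So your argument will go through once you replace the ``wrong-parity $k$'' rationale by this three-way case split on the endpoint of the left string of $Q$.
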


\begin{proof}
Fix $x \in A$ and look at
\begin{equation*}
 \begin{tikzpicture}[scale=.75]
\draw[Box] (0,0) rectangle (2,1); \node at (1,.5){$x$};
\draw[Box] (.5,1.5) rectangle (4.5,2); \draw[verythickline] (1,1) -- (1,1.5);
\node[scale=.75] at (2.5,1.75) {$\sum TL$};
\begin{scope}[xshift=3cm]
\draw[Box] (0,0) rectangle (2,1); \node at (1,.5) {$Q$};
\draw[verythickline] (1,1) -- (1,1.5); 
\draw[thick] (0,.5) arc(270:180:.25cm) -- (-.25,1.5);
\draw[thickline] (1,0) arc(0:-90:.25cm) -- (-.25,-.25) arc (270:180:.25cm) -- (-.5,1.5);  
\end{scope}
 \end{tikzpicture}
\end{equation*}
Consider the string on the left side of $Q$, for a fixed term in the sum over $TL$ diagrams there are three possibilities: this string is connected to the top of $Q$, to the bottom of $Q$, or to $x$.  It follows that the inner product of $x$ with the element in the statement of the proposition is equal to the sum of the terms above in which the string on the left side of $Q$ is connected to $x$.  But it is clear that this is equal to $(\tau_k \boxtimes \tau_k) \widetilde \delta_Q(x) = \langle \delta_Q(x), 1 \otimes 1 \rangle$, which completes the proof.  
\end{proof}

\begin{corollary}
If $Q \in \Phi$ then $\delta_Q$ is closable as an unbounded operator $L^2(M) \to L^2(M \otimes M^{op})$.
\end{corollary}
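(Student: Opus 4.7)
The plan is to prove closability by establishing that the adjoint $\delta_Q^*$ is densely defined: recall that a densely defined operator between Hilbert spaces is closable if and only if its adjoint has dense domain. By $\C$-linearity of the map $Q \mapsto \delta_Q$, I may reduce to $Q$ lying in a single graded piece $V^{(-1)^s}_{1,0}(s,t)$, so that the previous proposition applies and furnishes an explicit element of $L^2(M)$ representing $\delta_Q^*(1 \otimes 1)$. In particular $1 \otimes 1 \in \mathfrak D(\delta_Q^*)$, which is the key analytic input.

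The remaining work is the standard bootstrap from $1 \otimes 1 \in \mathfrak D(\delta_Q^*)$ to $A \otimes A^{op} \subset \mathfrak D(\delta_Q^*)$ via the derivation property. For $a,b \in A$ and $x \in A$, I would use the Leibniz identity
$$\delta_Q(a^*xb^*) = \delta_Q(a^*)\cdot xb^* + a^*\cdot \delta_Q(x)\cdot b^* + a^*x\cdot \delta_Q(b^*),$$
together with the compatibility of the $A$-$A$ bimodule action on $A \otimes A^{op}$ with the pairing against $1\otimes 1$, to rewrite
$$\langle \delta_Q(x), a\otimes b^{op}\rangle = \langle \delta_Q(a^*xb^*), 1\otimes 1\rangle - \langle \delta_Q(a^*)\cdot xb^*, 1\otimes 1\rangle - \langle a^*x\cdot \delta_Q(b^*), 1\otimes 1\rangle.$$
The first term is bounded by $\|a\|\|b\|\|\delta_Q^*(1\otimes 1)\|_2\|x\|_2$ by the previous proposition and the trace property. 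The remaining two terms are finite sums (since $\delta_Q(a^*), \delta_Q(b^*) \in A \boxtimes A$); after unwinding the Sweedler notation, in each summand $x$ appears inside a single trace $\tau(\,\cdot\, x\,\cdot\,)$, which is controlled by a constant times $\|x\|_2$ by Cauchy--Schwarz.

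Combining these estimates yields a constant $C_{a,b}$ with $|\langle \delta_Q(x), a\otimes b^{op}\rangle|\leq C_{a,b}\|x\|_2$, so $a\otimes b^{op}\in \mathfrak D(\delta_Q^*)$. Density of $A \otimes A^{op}$ in $L^2(M\otimes M^{op})$ then finishes the proof. I do not expect a substantial obstacle: the essential analytic content is already packaged in the previous proposition, and what remains is routine Hilbert-space bookkeeping. The one point meriting mild care is lining up the left-right bimodule action on $A \boxtimes A$ with the standard $A$-$A$ bimodule structure on $L^2(M\otimes M^{op})$ used to define the adjoint, but this is built into the diagrammatic conventions already fixed in Section~\ref{sec:semi-finite}.
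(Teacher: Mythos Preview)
Your proposal is correct and matches the paper's approach: the paper's proof is the single line ``This follows from $1 \otimes 1 \in \mathfrak D(\delta_Q^*)$ by \cite{voi2},'' and what you have written is precisely the standard Voiculescu argument behind that citation (bootstrap from $1\otimes 1 \in \mathfrak D(\delta_Q^*)$ to $A\otimes A^{op}\subset\mathfrak D(\delta_Q^*)$ via the Leibniz rule, hence dense domain for the adjoint). Your reduction to a single graded piece is only needed to invoke the preceding proposition and obtain $1\otimes 1\in\mathfrak D(\delta_Q^*)$ by linearity; once that is in hand the remainder of your argument applies to arbitrary $Q\in\Phi$.
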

\begin{proof}
This follows from $1 \otimes 1 \in \frk D(\delta_Q^*)$ by \cite{voi2}.
\end{proof}

Note that $\Phi$ is identified with $(p_{1,+} + p_{1,-})Vp_{0,+}$, and so may be completed to obtain a Hilbert space $\overline \Phi \simeq (p_{1,+} + p_{1,-})L^2(\frk M)p_{0,+}$.  Likewise $\Omega$ may completed to obtain a Hilbert space $\overline \Omega \simeq (p_{0,+} + p_{0,-})L^2(\frk M)p_{0,+}$.

\begin{proposition}
As right $M \otimes M^{op}$ modules, we have
\begin{equation*}
 \dim_{M \otimes M^{op}}(\overline \Phi) = 2\delta I,
\end{equation*}
and
\begin{equation*}
\dim_{M \otimes M^{op}}(\overline \Omega) = 2I.
\end{equation*}
\end{proposition}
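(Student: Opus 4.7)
The plan is to compute both dimensions by first passing to the intermediate algebra $M \boxtimes M = p_{0,+}\frk M p_{0,+}$ and then using the global index formula of Corollary \ref{index}. The identifications given just before the proposition realize $\overline{\Phi}$ and $\overline{\Omega}$ as compressions of $L^2(\frk M)$ on the left by the finite-trace projections $p_{1,+}+p_{1,-}$ and $p_{0,+}+p_{0,-}$, respectively, and on the right by $p_{0,+}$. In both cases the right $M \otimes M^{op}$-action factors through $M \boxtimes M$ via the inclusion $M \otimes M^{op} \subset M \boxtimes M$ described in Section~\ref{sec:enveloping}.

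For the first step I will invoke the standard semifinite dimension formula: for finite-trace projections $p,q$ in a semifinite von Neumann algebra $(\mathcal{N}, Tr)$, the right $p\mathcal{N}p$-module $qL^2(\mathcal{N})p$ has Murray--von Neumann dimension $Tr(q)/Tr(p)$. The traces of the projections at hand are then immediate from the diagrammatic definition of $Tr$ on $V$: a $k$-string identity projection contributes $k$ closed loops under the trace formula, giving $Tr(p_{0,\pm}) = 1$ and $Tr(p_{1,\pm}) = \delta$. This yields
\begin{equation*}
\dim_{M \boxtimes M}(\overline{\Phi}) = \frac{Tr(p_{1,+}) + Tr(p_{1,-})}{Tr(p_{0,+})} = 2\delta, \qquad \dim_{M \boxtimes M}(\overline{\Omega}) = 2.
\end{equation*}

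For the final step, Corollary \ref{index} gives $[M \boxtimes M : M \otimes M^{op}] = I$, and the multiplicativity rule $\dim_{\mathcal{A}}(K) = [\mathcal{B}:\mathcal{A}]\cdot\dim_{\mathcal{B}}(K)$ for a right $\mathcal{B}$-module $K$ under restriction to a subfactor $\mathcal{A} \subset \mathcal{B}$ then immediately produces $\dim_{M\otimes M^{op}}(\overline{\Phi}) = 2\delta I$ and $\dim_{M\otimes M^{op}}(\overline{\Omega}) = 2I$. The infinite-depth case $I = \infty$ is handled identically, as both sides are then infinite since the $M\boxtimes M$-dimensions are positive. The only delicate point is the semifinite dimension formula used in step one; it is standard, but if necessary one can verify it directly by decomposing $q$ as a sum of sub-projections Murray--von Neumann equivalent to sub-projections of $p$ and applying finite-factor compression theory to each summand. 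I anticipate no further obstacle.
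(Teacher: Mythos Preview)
Your argument is correct and is in fact somewhat cleaner than the paper's, but it leans on one point that the paper does not state and that you should make explicit: the semifinite dimension formula $\dim_{p\frk M p}(qL^2(\frk M)p)=Tr(q)/Tr(p)$ requires $\frk M$ to be a factor. This is true here---the corner $p_{0,+}\frk M p_{0,+}=M\boxtimes M$ is a factor by Theorem~\ref{envelope} and Lemma~\ref{compress}, and $p_{0,+}$ has full central support because $V_{k,0}^\epsilon(s,t)$ is nonzero for suitable $s,t$ for every $(k,\epsilon)$---but you should say so rather than leave it to the reader. Your parenthetical about decomposing $q$ into subprojections equivalent to subprojections of $p$ is precisely the content that needs to be supplied, and it is not automatic without factoriality.

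The paper's own proof takes a different, more hands-on route. Rather than computing $\dim_{M\boxtimes M}$ first and then multiplying by the index, it works directly over $M\otimes M^{op}$: for each of $\overline{\Phi_o},\overline{\Phi_e},\overline{\Omega_o},\overline{\Omega_e}$ it writes down a concrete element $c$ (a single cap, suitably placed) and uses the argument of Lemma~\ref{compress} to exhibit $L^2(M\boxtimes M)$ as $q\cdot\overline{\Phi_o}$ for $q=c^*(cc^*)^{-1}c$. It then observes that $M_1$ sits inside the commutant of the right $M\otimes M^{op}$-action, so by uniqueness of the trace on the factor $M_1$ one reads off $\tau(q)=\delta^{-1}$ in the commutant; combined with $\dim_{M\otimes M^{op}}L^2(M\boxtimes M)=I$ this gives $\dim_{M\otimes M^{op}}\overline{\Phi_o}=\delta I$. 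Your approach packages all four pieces at once via the trace on $\frk M$ and the index multiplicativity rule, which is more economical; the paper's approach has the advantage of never needing to assert anything about $\frk M$ globally, since the explicit $c$'s furnish exactly the Murray--von Neumann equivalences required.
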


\begin{proof}
Let $c \in V_{0,1}^-(1,0)$ be the following element:
\begin{equation*}
\begin{tikzpicture}[xscale=.6,yscale=.6]
\draw[Box] (0,0) rectangle (1,1);
\draw[thick] (1,.5) arc(270:180:.5cm); 
\node[left] at (-.4,.5) {$c =$};
\end{tikzpicture}
\end{equation*}
It follows from the same argument as in Lemma \ref{compress} that $L^2(M \boxtimes M) \simeq q\cdot \overline {\Phi_o}$ as right $M \otimes M^{op}$-modules, where $q = c^*(cc^*)^{-1}c$ is the initial projection of $c^*c$.  Note that there is an obvious inclusion of $M_1$ into the commutant of the right action of $M \otimes M^{op}$ on $\overline \Phi_o$.  It follows from the uniqueness of the trace on $M_1$ that the trace of $q$ in the commutant of $M \otimes M^{op}$ is equal to its trace in $M_1$, which is $\delta^{-1}$.  By Corollary \ref{index} we then have 
\begin{equation*}
\dim_{M \otimes M^{op}} \overline {\Phi_o} = \delta \cdot I.
\end{equation*}
Likewise we have $\dim_{M \otimes M^{op}} \overline{\Phi_e} = \delta \cdot I$, so that $\dim_{M \otimes M^{op}} \overline{\Phi} = 2\delta I$.

Clearly we have $\overline{\Omega_e} \simeq L^2(M \boxtimes M^{op})$, and hence $\dim_{M \otimes M^{op}}\overline{\Omega_e} = I$.  On the other hand, let $c' \in V_{0,1}^+(1,0)$ be as $c$ above but with the shading reversed, and set $q' = {c'}^*(c{c'}^*)^{-1}c'$.  Then by the same argument as Lemma \ref{compress}, we have $\overline{\Omega_o} \simeq q' \cdot \overline{\Phi_e}$, and hence $\dim_{M \otimes M^{op}} \overline{\Omega_o} = I$.
\end{proof}

\begin{remark}
For a family $X_1,\dotsc,X_n$ of self-adjoint operators which generate a II$_1$ factor $M$, Voiculescu has defined the \textit{free entropy dimension} $\delta_0(X_1,\dotsc,X_n)$.  If $M$ is isomorphic to an interpolated free group factor $L\mb F_t$, it is generally expected (but not proven) that $\delta_0(X_1,\dotsc,X_n) = t$.

Beginning with the work of Connes-Shlyakhtenko on $L^2$-Betti numbers \cite{cos}, there have been a number of recent results relating Voiculescu's free entropy dimension to the Murray von Neumann dimension of certain spaces of derivations.  In particular, it it reasonable to expect that in ``nice'' situations, we should have the following equality:
\begin{equation*}
\dim_{M \otimes M^{op}} \overline{\Der_c(A,A \otimes A^{op})}^{(L^2(M \otimes M^{op}))^n} = \delta_0(X_1,\dotsc,X_n),
\end{equation*}
where $A = \C \langle X_1,\dotsc,X_n \rangle$ and $\Der_c(A,A \otimes A^{op})$ denotes the module of closable derivations $\delta:A \to A \otimes A^{op}$, which is embedded into $(L^2(M \otimes M^{op}))^n$ via $\delta \mapsto (\delta(X_1),\dotsc,\delta(X_n))$.  Note in particular that if $X_1,\dotsc,X_n$ are free semicircular random variables, then both sides of the equation are indeed equal to $n$.

Combining the results above, we have constructed a $A \otimes A^{op}$-linear map from $D = \{\delta_Q:Q \in \Phi\}$ onto a dense subset of the orthogonal complement in $\overline \Phi$ of $\rho(\overline \Omega)$.  It follows that
\begin{equation*}
\dim_{M \otimes M^{op}} \overline{D}^{\overline \Phi} = \dim_{M \otimes M^{op}} \overline \Phi - \dim_{M \otimes M^{op}} \rho(\overline \Omega \ominus L^2(M \otimes M^{op})).
\end{equation*}
One can show that the extension of $\rho$ to $\overline \Omega$ is still injective, but the proof is rather tedious and so we omit it.  Assuming this, we are led to the equation
\begin{equation*}
 \dim_{M \otimes M^{op}} \overline{D}^{\overline \Phi} = 2\delta I - (2I - 1) = 1 + 2I(\delta-1) = r_0.
\end{equation*}
Now if we have $D = \Der_c(A,A \otimes A^{op})$, then in light of the remark above this ``explains'' the formula for $r_0$ from \cite{gjs2}.  Similar reasoning may of course be applied to the parameters $r_k$, $k \geq 1$.

So we are led to the question of whether every closable derivation $\delta:A \to A \otimes A^{op}$ is equal to $\delta_Q$ for some $Q \in \Phi$.  First note that it is not the case that every (algebraic) derivation $A \to A \otimes A^{op}$ is of this form.  For example, when $\mc P$ is the tensor planar algebra on $\C^n$ then $A \simeq \C \langle t_1,\dotsc,t_{n^2}\rangle$ is the free algebra generated by $P_1$.  There are then the free difference quotient derivations $\partial_{i}:A \to A \otimes A^{op}$ defined by
\begin{equation*}
\partial_{i}(t_j) = \delta_{ij} \cdot 1 \otimes 1.
\end{equation*}
Note that $\partial_i$ takes the $n$ graded component of $A$ into the $n-1$ graded component of $A \otimes A^{op}$.  But it is clear that the derivations $\delta_Q$ never decrease the grading, and so we cannot have $\partial_i = \delta_Q$.

On the other hand, we will now show that any ``sufficiently smooth'' derivation is equal to some $\delta_Q$.  Here ``sufficiently smooth'' will mean that the derivation can be extended to the semifinite algebra from \cite{gjs2}.  As we will discuss below, we believe that this condition should be equivalent to closability.
\end{remark}

We will use the construction from \cite{gjs2}, the reader is referred there for details.  To avoid conflict with the notations of the current paper, we will use $\mc A_+, \mc V_+$ to denote the algebras $A_+,V_+$ from \cite{gjs2}.  Note that we have an identification $A = e_0\mc V_+e_0$.

\begin{proposition}\label{extension}
Let $\widetilde \delta:\mc V_+ \to \mc V_+e_0 \otimes e_0 \mc V_+^{op}$ be a derivation such that $\widetilde \delta_{\mc A_+} = 0$.  Let $\delta:A \to A \otimes A^{op}$ denote the restriction of $\widetilde \delta$, then $\delta = \delta_Q$ for some $Q \in \Phi$.
\end{proposition}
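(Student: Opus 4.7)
The plan is to extract the element $Q\in\Phi$ directly from the values of $\widetilde\delta$ on canonical single-box generators of $\mc V_+$, and then to check via the Leibniz rule (using $\widetilde\delta|_{\mc A_+}=0$) that the resulting derivation $\delta_Q$ agrees with $\delta$ on all of $A$.

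First I would use the structure of the semifinite algebra from \cite{gjs2}: $\mc V_+$ is spanned by diagrams consisting of a single box $y\in P_n^\epsilon$ decorated by $\mc A_+$-type ``string-moving'' operations (cups, caps, and the TL projections), and any element $\hat x \in A = e_0\mc V_+ e_0$ representing a diagram $x \in P_n^\epsilon$ with all $2n$ strings on top may be written as $a_1\,\check x\,a_2$, where $a_1,a_2\in\mc A_+$ and $\check x\in\mc V_+$ is a canonical placement of $x$ with a single distinguished boundary string emerging on the left. Since $\widetilde\delta$ is a bimodule derivation killing $\mc A_+$, Leibniz yields $\widetilde\delta(\hat x)=a_1\,\widetilde\delta(\check x)\,a_2$, so $\widetilde\delta$ on $A$ is determined by its values on the canonical generators $\check x$.

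Next I would define $Q$ by collecting $\widetilde\delta(\check x)$ over all boxes $x$. The target $\mc V_+e_0\otimes e_0\mc V_+^{op}$ admits a diagrammatic description, and after peeling off the $\mc A_+$-boundary decoration enforced by the derivation identity, what remains is a pair of diagrams joined by the single free ``cut'' string on the left --- precisely the data of an element of $V_{1,0}^\pm(s,t)\subset\Phi$. $\mc A_+$-equivariance (again from $\widetilde\delta|_{\mc A_+}=0$) ensures that these pieces assemble into a single well-defined $Q\in\Phi$, independent of the auxiliary choices made in positioning $\check x$. Substituting $\hat x=a_1\check x a_2$ back into the Leibniz expansion of $\widetilde\delta(\hat x)$ then reproduces the sum over cut positions along the top of $x$ that defines $\widetilde\delta_Q(x)$; composing with $E_{M\otimes M^{op}}$ gives $\delta(x)=\delta_Q(x)$.

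The main obstacle is the fitting-together step in the second paragraph: one must check that the family $\{\widetilde\delta(\check x)\}_x$ glues to a single element of $\Phi$ rather than to a collection of unrelated data. This reduces to verifying a small family of naturality relations --- that any two decompositions $\hat x = a\check xb = a'\check x'b'$ differ by multiplication by elements of $\mc A_+$ --- which is forced by $\widetilde\delta|_{\mc A_+}=0$ but requires some careful diagrammatic bookkeeping inside the semifinite algebra of \cite{gjs2}. Once this compatibility is established, the identification $\delta=\delta_Q$ follows by direct comparison of diagrams.
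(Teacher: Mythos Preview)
There is a genuine gap in the factorization step. The decomposition $\hat x = a_1\,\check x\,a_2$ with $a_1,a_2\in\mc A_+$ and a \emph{single} ``canonical placement'' $\check x$ does not reflect how elements of $A=e_0\mc V_+e_0$ sit inside $\mc V_+$. In \cite{gjs2} the algebra $\mc V_+$ is generated over $\mc A_+$ by the elements $c_n,c_n^*$, each of which moves exactly one string between the side and the top; an element $x\in P_n$ viewed in $A$ (all $2n$ strings on top) is therefore a product of the $\mc A_+$-box $x$ with $2n$ of the generators $c_k,c_k^*$. Applying Leibniz to this length-$(2n{+}1)$ product is precisely what produces the sum over cut positions appearing in $\widetilde\delta_Q(x)$. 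Your three-factor expression yields at most one nonzero term after Leibniz and so cannot recover that sum; there is no single $\check x$ with one side string and $a_i\in\mc A_+$ for which $a_1\check x a_2$ equals $\hat x$.

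The deeper issue is your construction of $Q$. The values $\widetilde\delta(c_n)$ and $\widetilde\delta(c_n^*)$ depend on $n$, while $Q\in\Phi$ must be a single element encoding all of them at once. The paper writes the $(s,t)$-component of $\widetilde\delta(c_n),\widetilde\delta(c_n^*)$ as $a_{m,s}\otimes b_{m',t}^{op}$, and the bulk of the argument is spent on two points: first, establishing a diagrammatic compatibility relation among these components for varying $n$ --- obtained by equating two Leibniz expansions of $\widetilde\delta(y)$ in which the strings of $y$ are routed with a cap on the right in two slightly different positions; second, defining $Q_{s,t}$ from the minimal-index components and proving the identity
\[
a_{2n+1,s}\otimes b_{2n,t}^{op}\;=\;E_{M\otimes M^{op}}\bigl[(2n\text{ parallel strings})\wedge Q_{s,t}\bigr].
\]
Here $E_{M\otimes M^{op}}$ enters essentially, not merely as a final post-composition: it is the mechanism by which a finite $Q$ can encode the infinite family $\{\widetilde\delta(c_n)\}_n$. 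Your ``collecting $\widetilde\delta(\check x)$ over all boxes $x$'' does not address this --- those values genuinely depend on $x$ and do not glue into a fixed element of $\Phi$ without the compatibility relation and the conditional-expectation identity above.
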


\begin{proof}
Recall from \cite{gjs2} that $\mc V_+$ is generated by $\mc A_+$ and $\{c_n,c_n^*:n \geq 0\}$, where
\begin{equation*}
\begin{tikzpicture}
\draw[Box] (0,1) rectangle (1,2);
\draw[thick] (1,1.6) arc(-90:-180:.5cm and .4cm);
\draw[thickline] (0,1.25) -- node[count,rectangle,rounded corners] {$2n$} (1,1.25);
\node[left] at (-.25,1.5) {$c_n = $};
\end{tikzpicture}
\end{equation*}
Let 
\begin{align*}
\widetilde \delta(c_n) &= \sum_{\substack{\text{$s$ even}\\\text{$t$ odd}}} a_{2n,s} \otimes b_{2n+1,t}^{op},\\
\widetilde \delta(c_n^*) &= \sum_{\substack{\text{$s$ odd}\\\text{$t$ even}}} a_{2n+1,s} \otimes b_{2n,t}^{op},
\end{align*}
where $a_{m,s} \otimes b_{m,t}^{op} \in V_{m,0}^{(-1)^s}(s,0) \otimes V_{0,m}^+(t,0)$ (a slight abuse of notation as this need not be a pure tensor).

Let $p \geq 0, q > 0$ be even with $p+q = 2n$. We claim that for any $y \in P_n$ and any even $s$ and odd $t$ we have the following relation:
\begin{equation*}
\begin{tikzpicture}
\begin{scope}[yscale=.75]
\draw[Box] (0,1.5) rectangle (1,2.5); \node[scale=.75] at (.5,2) {$a_{q,s}$};
\draw[thickline] (.5,2.5) -- (.5,2.75); 
\draw[Box] (-2,1.5) rectangle (-1,2.5); \node at (-1.5,2) {$y$}; \node[marked] at (-2.1,1.4) {};
\draw[thickline] (-1,2) -- node[rcount] {$q$} (0,2); \draw[thickline](-2,2) arc(270:180:.25cm) -- (-2.25,2.75);
\draw[Box] (0,0) rectangle (1,1); \node[scale=.75] at (.5,.5) {$b_{q+1,t}$};
\draw[thickline] (.5,0) -- (.5,-.25); \draw[thickline] (0,.75) arc(90:180:.5cm) -- (-.5,-.25);
\draw[thick] (0,.25) arc(270:90:.25cm and .125cm);
\end{scope}
\begin{scope}[xshift=4.5cm]
\begin{scope}[yscale=.75]
\draw[Box] (0,1.5) rectangle (1,2.5); \node[scale=.75] at (.5,2) {$a_{p,s}$};
\draw[thickline] (.5,2.5) -- (.5,2.75); 
\draw[Box] (-2,0) rectangle (-1,1); \node[flip] at (-1.5,.5) {$y$}; \node[marked] at (-.9,1.1) {};
\draw[thickline] (-1,.5) -- node[rcount,scale=.7] {$p+1$} (0,.5); 
\draw[thickline](-2,.5) arc(90:180:.25cm) -- (-2.25,-.25);
\draw[Box] (0,0) rectangle (1,1); \node[scale=.75] at (.5,.5) {$b_{p+1,t}$};
\draw[thickline] (.5,0) -- (.5,-.25); \draw[thickline] (0,2) arc(270:180:.25cm) -- (-.25,2.75);
\end{scope}
\node[left] at (-2.5,1) {$=$};
\end{scope}
\end{tikzpicture}
\end{equation*}

To prove this we first note that it suffices to show this for $q$ sufficiently large, which follows easily from $c_n = \delta\cdot e_{2n}c_{n+1}$.  We therefore assume that $q > 2s$, which will simplify the formulas which follow.

 It follows from routine calculations that
\begin{align*}
\begin{tikzpicture}[yscale=.75]
\draw[Box] (0,1.5) rectangle (1.5,2.5); \node[scale=.75] at (.75,2) {$a_{2(n-k),s}$};
\draw[thickline] (.75,2.5) -- (.75,2.75); \draw[thickline] (0,2) arc(270:180:.25cm) -- (-.25,2.75);
\draw[Box] (0,0) rectangle (1.5,1); \node[scale=.75] at (.75,.5) {$b_{2(n-k)+1,t}$};
\draw[thickline] (.75,0) -- (.75,-.25); \draw[thickline] (0,.6) -- (-1.25,.6);
\draw[thickline] (-1.25,.25) -- node[rcount,scale=.6] {$2k-1$} (-.25,.25) arc(90:0:.2cm and .5cm);
\node[left] at (-1.5,.85) {$\displaystyle\sum_{\substack{s,t\\\text{$s$ even}\\\text{$t$ odd}}}\displaystyle\sum_{k=1}^{n}$};
\begin{scope}[xshift=5cm]
\draw[Box] (0,1.5) rectangle (1.5,2.5); \node[scale=.75] at (.75,2) {$a_{2(n-k)+1,s}$};
\draw[thickline] (.75,2.5) -- (.75,2.75); \draw[thickline] (0,1.75) arc(270:180:.5cm and .25cm) -- (-.5,2.75);
\draw[thick] (0,2) arc(270:90:.25cm and .125cm);
\draw[Box] (0,0) rectangle (1.5,1); \node[scale=.75] at (.75,.5) {$b_{2(n-k),t}$};
\draw[thickline] (.75,0) -- (.75,-.25); \draw[thickline] (0,.6) -- (-1,.6);
\draw[thickline] (-1,.25) -- node[rcount,scale=.6] {$2k$} (-.25,.25) arc(90:0:.2cm and .5cm);
\node[left] at (-1.25,.85) {$+\displaystyle\sum_{\substack{s,t\\\text{$s$ odd}\\\text{$t$ even}}}\displaystyle\sum_{k=0}^{n-1}$};
\end{scope}
\begin{scope}[xshift=-5cm,yshift=.75cm]
\draw[thickline] (0,1) arc(180:270:.5cm and 1cm); \node[scale=.7,rcount] at (.1,.5) {$2n$};
\node[left] at (-.1,.4) {$\widetilde \delta\biggl($}; \node[right] at (.6,.4) {$\biggr) =$};
\end{scope}
\end{tikzpicture}\\
\begin{tikzpicture}[yscale=.75]
\draw[Box] (0,1.5) rectangle (1.5,2.5); \node[scale=.75] at (.75,2) {$a_{2(n-k)+1,s}$};
\draw[thickline] (.75,2.5) -- (.75,2.75); \draw[thickline] (0,.5) arc(90:180:.25cm) -- (-.25,-.25);
\draw[thickline] (0,1.85) -- (-1.25,1.85);
\draw[Box] (0,0) rectangle (1.5,1); \node[scale=.75] at (.75,.5) {$b_{2(n-k),t}$};
\draw[thickline] (.75,0) -- (.75,-.25); 
\draw[thickline] (-1.25,2.25) -- node[rcount,scale=.6] {$2k-1$} (-.25,2.25) arc(-90:0:.2cm and .5cm);
\node[left] at (-1.5,.85) {$\displaystyle\sum_{\substack{s,t\\\text{$s$ odd}\\\text{$t$ even}}}\displaystyle\sum_{k=1}^{n}$};
\begin{scope}[xshift=5cm]
\draw[Box] (0,1.5) rectangle (1.5,2.5); \node[scale=.75] at (.75,2) {$a_{2(n-k),s}$};
\draw[thickline] (.75,2.5) -- (.75,2.75); \draw[thickline] (0,.75) arc(90:180:.5cm) -- (-.5,-.25);
\draw[thickline] (0,1.85) -- (-1,1.85); \draw[thickline] (-1,2.25) -- node[rcount,scale=.6] {$2k$} (-.25,2.25) arc(-90:0:.2cm and .5cm);
\draw[thick] (0,.25) arc(270:90:.25cm and .125cm);
\draw[Box] (0,0) rectangle (1.5,1); \node[scale=.75] at (.75,.5) {$b_{2(n-k)+1,t}$};
\draw[thickline] (.75,0) -- (.75,-.25);
\node[left] at (-1.25,.85) {$+\displaystyle\sum_{\substack{s,t\\\text{$s$ even}\\\text{$t$ odd}}}\displaystyle\sum_{k=0}^{n-1}$};
\end{scope}
\begin{scope}[xshift=-5cm,yshift=.75cm]
\begin{scope}[xscale=-1,xshift=-.5cm]
\draw[thickline] (0,1) arc(180:270:.5cm and 1cm); \node[scale=.7,rcount] at (.1,.5) {$2n$};
\end{scope}
\node[left] at (-.1,.4) {$\widetilde \delta\biggl($}; \node[right] at (.6,.4) {$\biggr) =$};
\end{scope}
\end{tikzpicture}
\end{align*}

Now let $R_{s,t}$ denote the component of $\widetilde \delta(y)$ which lies in $V_{0,0}(p+2s,q-1+t)$.  Since we have
\begin{equation*}
\begin{tikzpicture}[scale=.75]
\draw[Box] (0,0) rectangle (1,1); \node at (.5,.5) {$y$};
\draw[thickline] (0,.5) arc(270:180:.5cm and .75cm); \node[scale=.75,rcount] at (-.4,.8) {$p$};
\draw[thickline] (1,.5) arc(-90:0:.5cm and .75cm); \node[scale=.75,rcount] at (1.4,.8) {$q$};
\node[left] at (-.55,.5) {$\widetilde\delta(y) = \widetilde \delta\Biggl($};
\node[right] at (1.55,.5) {$\Biggr),$};
\end{tikzpicture}
\end{equation*}
it follows from the formulas above that
\begin{align*}
\begin{tikzpicture}[yscale=.75]
\draw[Box] (0,1.5) rectangle (1,2.5); \node[scale=.75] at (.5,2) {$a$};
\draw[Box] (-1.75,1.5) rectangle (-1.25,2.5); \node[scale=.75] at (-1.5,2) {$y$};
\draw[thickline] (-1.75,2) arc(270:180:.25cm and .5cm) -- node[rcount,scale=.75] {$p$} (-2,2.75);
\draw[thickline] (.5,2.5) -- (.5,2.75); \draw[thickline] (0,.5) arc(90:180:.5cm) -- node[rcount,scale=.6,near end] {$q-2k$} (-.5,-.25);
\draw[thickline] (0,1.85) -- (-1.25,1.85);
\draw[Box] (0,0) rectangle (1,1); \node[scale=.75] at (.5,.5) {$b$};
\draw[thickline] (.5,0) -- (.5,-.25); 
\draw[thickline] (-1.25,2.25) -- node[rcount,scale=.6] {$2k-1$} (-.25,2.25) arc(-90:0:.2cm and .5cm);
\node[left] at (-2.25,1.25) {$R_{s,t} = \displaystyle\sum_{k=1}^{s}$};
\begin{scope}[xshift=4.5cm]
\draw[Box] (0,1.5) rectangle (1,2.5); \node[scale=.75] at (.5,2) {$a$};
\draw[Box] (-1.5,1.5) rectangle (-1,2.5); \node[scale=.75] at (-1.25,2) {$y$};
\draw[thickline] (-1.5,2) arc(270:180:.25cm and .5cm) -- node[rcount,scale=.75] {$p$} (-1.75,2.75);
\draw[thickline] (.5,2.5) -- (.5,2.75); \draw[thickline] (0,.75) arc(90:180:.75cm and .5cm) -- node[rcount,scale=.6,near end] {$q-2k+1$} (-.75,-.25);
\draw[thickline] (0,1.85) -- (-1,1.85); \draw[thickline] (-1,2.25) -- node[rcount,scale=.6] {$2k$} (-.25,2.25) arc(-90:0:.2cm and .5cm);
\draw[thick] (0,.25) arc(270:90:.25cm and .125cm);
\draw[Box] (0,0) rectangle (1,1); \node[scale=.75] at (.5,.5) {$b$};
\draw[thickline] (.5,0) -- (.5,-.25);
\node[left] at (-1.75,1.25) {$+\displaystyle\sum_{k=0}^{s}$};
\end{scope}
\end{tikzpicture}\\
\begin{tikzpicture}[yscale=.75]
\draw[Box] (0,1.5) rectangle (1,2.5); \node[scale=.75] at (.5,2) {$a$};
\draw[thickline] (.5,2.5) -- (.5,2.75); \draw[thickline] (0,2) arc(270:180:.5cm) -- node[rcount,scale=.6,near end] {$2k-2$} (-.5,2.75);
\draw[Box] (0,0) rectangle (1,1); \node[scale=.75] at (.5,.5) {$b$};
\draw[Box] (-1.75,0) rectangle(-1.25,1); \node[scale=.75] at (-1.5,.5) {$y$};
\draw[thickline] (-1.75,.5) arc(90:180:.25cm and .5cm) -- node[rcount,scale=.75] {$q$}(-2,-.25);
\draw[thickline] (.5,0) -- (.5,-.25); \draw[thickline] (0,.6) -- node[rcount,scale=.6] {$2k-1$}  (-1.25,.6);
\draw[thickline] (-1.25,.25) --(-.25,.25) arc(90:0:.2cm and .5cm);
\node[left] at (-2.25,1.25) {$+ \displaystyle\sum_{k=1}^{p/2}$};
\begin{scope}[xshift=4.5cm]
\draw[Box] (0,1.5) rectangle (1,2.5); \node[scale=.75] at (.5,2) {$a$};
\draw[thickline] (.5,2.5) -- (.5,2.75); \draw[thickline] (0,1.75) arc(270:180:.5cm and .25cm) -- node[rcount,scale=.6,near end] {$2k-1$} (-.5,2.75);
\draw[thick] (0,2) arc(270:90:.25cm and .125cm);
\draw[Box] (0,0) rectangle (1,1); \node[scale=.75] at (.5,.5) {$b$};
\draw[Box] (-1.5,0) rectangle(-1,1); \node[scale=.75] at (-1.25,.5) {$y$};
\draw[thickline] (-1.5,.5) arc(90:180:.25cm and .5cm) -- node[rcount,scale=.75] {$q$} (-1.75,-.25);
\draw[thickline] (.5,0) -- (.5,-.25); \draw[thickline] (0,.6) -- node[rcount,scale=.6] {$2k$} (-1,.6);
\draw[thickline] (-1,.25) -- (-.25,.25) arc(90:0:.2cm and .5cm);
\node[left] at (-2,1.25) {$+\displaystyle\sum_{k=1}^{p/2}$};
\end{scope}
\end{tikzpicture}
\end{align*}
Note that we have suppressed the subscripts on $a$ and $b$, which are determined by the condition that each term has $p+2s$ upper strings and $q-1+t$ lower strings.

On the other hand, we also have
\begin{equation*}
 \begin{tikzpicture}[scale=.75]
\draw[Box] (0,0) rectangle (1,1); \node at (.5,.5) {$y$};
\draw[thickline] (0,.5) arc(270:180:.5cm and .75cm); \node[scale=.6,rcount] at (-.5,.9) {$p+1$};
\draw[thickline] (1,.5) arc(-90:0:.5cm and .75cm); \node[scale=.6,rcount] at (1.5,.9) {$q-1$};
\node[left] at (-.65,.5) {$\widetilde\delta(y) = \widetilde \delta\Biggl($};
\node[right] at (1.65,.5) {$\Biggr).$};
\end{tikzpicture}
\end{equation*}
It then follows by similar computations that
\begin{align*}
\begin{tikzpicture}[yscale=.75]
\draw[Box] (0,1.5) rectangle (1,2.5); \node[scale=.75] at (.5,2) {$a$};
\draw[Box] (-1.75,1.5) rectangle (-1.25,2.5); \node[scale=.75] at (-1.5,2) {$y$};
\draw[thickline] (-1.75,2) arc(270:180:.25cm and .5cm) -- node[rcount,scale=.6] {$p+1$} (-2,2.75);
\draw[thickline] (.5,2.5) -- (.5,2.75); \draw[thickline] (0,.5) arc(90:180:.5cm) -- node[rcount,scale=.6,near end] {$q-2k-1$} (-.5,-.25);
\draw[thickline] (0,1.85) -- (-1.25,1.85);
\draw[Box] (0,0) rectangle (1,1); \node[scale=.75] at (.5,.5) {$b$};
\draw[thickline] (.5,0) -- (.5,-.25); 
\draw[thickline] (-1.25,2.25) -- node[rcount,scale=.6] {$2k-1$} (-.25,2.25) arc(-90:0:.2cm and .5cm);
\node[left] at (-2.25,1.25) {$R_{s,t} = \displaystyle\sum_{k=1}^{s}$};
\begin{scope}[xshift=4.5cm]
\draw[Box] (0,1.5) rectangle (1,2.5); \node[scale=.75] at (.5,2) {$a$};
\draw[Box] (-1.5,1.5) rectangle (-1,2.5); \node[scale=.75] at (-1.25,2) {$y$};
\draw[thickline] (-1.5,2) arc(270:180:.25cm and .5cm) -- node[rcount,scale=.6] {$p+1$} (-1.75,2.75);
\draw[thickline] (.5,2.5) -- (.5,2.75); \draw[thickline] (0,.75) arc(90:180:.75cm and .5cm) -- node[rcount,scale=.6,near end] {$q-2k-2$} (-.75,-.25);
\draw[thickline] (0,1.85) -- (-1,1.85); \draw[thickline] (-1,2.25) -- node[rcount,scale=.6] {$2k$} (-.25,2.25) arc(-90:0:.2cm and .5cm);
\draw[thick] (0,.25) arc(270:90:.25cm and .125cm);
\draw[Box] (0,0) rectangle (1,1); \node[scale=.75] at (.5,.5) {$b$};
\draw[thickline] (.5,0) -- (.5,-.25);
\node[left] at (-1.75,1.25) {$+\displaystyle\sum_{k=0}^{s-1}$};
\end{scope}
\end{tikzpicture}\\
\begin{tikzpicture}[yscale=.75]
\draw[Box] (0,1.5) rectangle (1,2.5); \node[scale=.75] at (.5,2) {$a$};
\draw[thickline] (.5,2.5) -- (.5,2.75); \draw[thickline] (0,2) arc(270:180:.5cm) -- node[rcount,scale=.6,near end] {$2k$} (-.5,2.75);
\draw[Box] (0,0) rectangle (1,1); \node[scale=.75] at (.5,.5) {$b$};
\draw[Box] (-1.75,0) rectangle(-1.25,1); \node[scale=.75] at (-1.5,.5) {$y$};
\draw[thickline] (-1.75,.5) arc(90:180:.25cm and .5cm) -- node[rcount,scale=.6] {$q-1$}(-2,-.25);
\draw[thickline] (.5,0) -- (.5,-.25); \draw[thickline] (0,.6) -- node[rcount,scale=.6] {$2k+1$}  (-1.25,.6);
\draw[thickline] (-1.25,.25) --(-.25,.25) arc(90:0:.2cm and .5cm);
\node[left] at (-2.25,1.25) {$+ \displaystyle\sum_{k=0}^{p/2}$};
\begin{scope}[xshift=4.5cm]
\draw[Box] (0,1.5) rectangle (1,2.5); \node[scale=.75] at (.5,2) {$a$};
\draw[thickline] (.5,2.5) -- (.5,2.75); \draw[thickline] (0,1.75) arc(270:180:.5cm and .25cm) -- node[rcount,scale=.6,near end] {$2k-1$} (-.5,2.75);
\draw[thick] (0,2) arc(270:90:.25cm and .125cm);
\draw[Box] (0,0) rectangle (1,1); \node[scale=.75] at (.5,.5) {$b$};
\draw[Box] (-1.5,0) rectangle(-1,1); \node[scale=.75] at (-1.25,.5) {$y$};
\draw[thickline] (-1.5,.5) arc(90:180:.25cm and .5cm) -- node[rcount,scale=.6] {$q-1$} (-1.75,-.25);
\draw[thickline] (.5,0) -- (.5,-.25); \draw[thickline] (0,.6) -- node[rcount,scale=.6] {$2k$} (-1,.6);
\draw[thickline] (-1,.25) -- (-.25,.25) arc(90:0:.2cm and .5cm);
\node[left] at (-2,1.25) {$+\displaystyle\sum_{k=1}^{p/2}$};
\end{scope}
\end{tikzpicture}
\end{align*}

Setting these two expressions for $R_{s,t}$ equal to each other and cancelling out common terms, we obtain the desired formula.

By the same argument, if $p,q \geq 1$ are odd and $p+q = 2n$, then for any $y \in P_{n}$ and any odd $s$ and even $t$ we have:
\begin{equation*}
\begin{tikzpicture}
\begin{scope}[yscale=.75]
\draw[Box] (0,1.5) rectangle (1,2.5); \node[scale=.75] at (.5,2) {$a_{q,s}$};
\draw[thickline] (.5,2.5) -- (.5,2.75); 
\draw[Box] (-2,1.5) rectangle (-1,2.5); \node at (-1.5,2) {$y$}; \node[marked] at (-2.1,1.4) {};
\draw[thickline] (-1,2) -- node[rcount] {$q$} (0,2); \draw[thickline](-2,2) arc(270:180:.25cm) -- (-2.25,2.75);
\draw[Box] (0,0) rectangle (1,1); \node[scale=.75] at (.5,.5) {$b_{q-1,t}$};
\draw[thickline] (.5,0) -- (.5,-.25); \draw[thickline] (0,.5) arc(90:180:.25cm) -- (-.25,-.25);
\end{scope}
\begin{scope}[xshift=4.5cm]
\begin{scope}[yscale=.75]
\draw[Box] (0,1.5) rectangle (1,2.5); \node[scale=.75] at (.5,2) {$a_{p+2,s}$};
\draw[thickline] (.5,2.5) -- (.5,2.75); 
\draw[Box] (-2,0) rectangle (-1,1); \node[flip] at (-1.5,.5) {$y$}; \node[marked] at (-.9,1.1) {};
\draw[thickline] (-1,.5) -- node[rcount,scale=.7] {$p$} (0,.5); 
\draw[thickline](-2,.5) arc(90:180:.25cm) -- (-2.25,-.25);
\draw[Box] (0,0) rectangle (1,1); \node[scale=.75] at (.5,.5) {$b_{p,t}$};
\draw[thickline] (.5,0) -- (.5,-.25); \draw[thickline] (0,1.75) arc(270:180:.5cm) -- (-.5,2.75);
\draw[thick] (0,2) arc(270:90:.25cm and .125cm);
\end{scope}
\node[left] at (-2.5,1) {$=$};
\end{scope}
\end{tikzpicture}
\end{equation*}

Now for $s \geq 1$ odd and $t \geq 0$ even, define $Q_{s,t} \in V_{1,0}^-(s,t)$ by
\begin{equation*}
\begin{tikzpicture}
\draw[Box] (0,0) rectangle (1,1); \node at (.5,.5) {$Q_{s,t}$};
\draw[thick] (0,.5) arc(-90:-180:.25cm and .5cm) -- (-.25,1.25);
\draw[thickline] (.5,1) -- (.5,1.25);
\draw[thickline] (.5,0) -- (.5,-.25);
\begin{scope}[xshift=2.25cm]
\draw[Box] (0,0) rectangle (1,1); \node[scale=.75] at (.5,.5) {$a_{s+2,s}$};
\draw[thickline] (0,.25) arc(270:90:.4cm and .5cm) -- ++(.25,0) arc(90:0:.25cm and .25cm);
\draw[thick] (0,.5) arc(270:90:.25cm and .125cm);
\node[left] at (-.5,.5) {$=$};
\end{scope}
\begin{scope}[xshift=4.25cm]
\draw[Box] (0,0) rectangle (1,1); \node[scale=.75] at (.5,.5) {$b_{s+1,t}$};
\draw[thickline] (.5,0) -- (.5,-.25);
\draw[thickline] (0,.5) arc(270:180:.25cm) -- (-.25,1.25);
\node[left] at (-.5,.5) {$\cdot$};
\end{scope}
\end{tikzpicture}
\end{equation*}
We claim that
\begin{equation*}
\begin{tikzpicture}
\begin{scope}[yscale=.75]
\draw[Box] (0,1.5) rectangle (1,2.5); \node[scale=.75] at (.5,2) {$a_{2n+1,s}$};
\draw[thickline] (.5,2.5) -- (.5,2.75); \draw[thickline] (0,2) arc(270:180:.25cm) -- (-.25,2.75);
\draw[Box] (0,0) rectangle (1,1); \node[scale=.75] at (.5,.5) {$b_{2n,t}$};
\draw[thickline] (.5,0) -- (.5,-.25); \draw[thickline] (0,.5) arc(90:180:.25cm) -- (-.25,-.25);
\end{scope}
\begin{scope}[xshift=4cm,yshift=.4cm]
\node[left] at (0,.5) {$= E_{M \otimes M^{op}}\Biggl[$};
\draw[thickline] (.25,-.25) -- node[count,rectangle,rounded corners] {$2n$} (.25,1.25);
\begin{scope}[xshift=1cm]
\draw[Box] (0,0) rectangle (1,1); \node at (.5,.5) {$Q_{s,t}$};
\draw[thick] (0,.5) arc(-90:-180:.25cm and .5cm) -- (-.25,1.25);
\draw[thickline] (.5,1) -- (.5,1.25);
\draw[thickline] (.5,0) -- (.5,-.25);
\node[right] at (1.25,.5) {$\Biggr]$};
\end{scope}
\end{scope}
\end{tikzpicture}
\end{equation*}
Indeed, using the relation above we see that for $x \in V_{0,2n+1}(s,0)$ and $y \in V_{t,0}(2n,0)$ we have:
\begin{equation*}
\begin{tikzpicture}[yscale=.75]
\draw[Box] (0,1.5) rectangle (1,2.5); \node[scale=.75] at (.5,2) {$a_{2n+1,s}$};
\draw[thickline] (.5,2.5) arc(180:90:.25cm) -- (1.75,2.75) arc(90:0:.25cm); \draw[thickline] (0,2) arc(270:180:.25cm) -- (-.25,2.85) arc(180:90:.25cm) -- (2.5,3.1) arc(90:0:.25cm) -- (2.75,2.25) arc(0:-90:.25cm);
\draw[Box] (0,0) rectangle (1,1); \node[scale=.75] at (.5,.5) {$b_{2n,t}$};
\draw[thickline] (.5,0) arc(180:270:.25cm) -- (1.75,-.25) arc(-90:0:.25cm); \draw[thickline] (0,.5) arc(90:180:.25cm) -- (-.25,-.35) arc(180:270:.25cm) -- (2.5,-.6) arc(-90:0:.25cm) -- (2.75,.25) arc(0:90:.25cm);
\draw[Box] (1.5,1.5) rectangle (2.5,2.5); \node[scale=.75] at (2,2) {$x$};
\draw[Box] (1.5,0) rectangle (2.5,1); \node[scale=.75] at (2,.5) {$y$};
\begin{scope}[xshift=6cm]
\draw[Box] (0,1.5) rectangle (1,2.5); \node[scale=.75] at (.5,2) {$a_{s+2,s}$};
\draw[thick] (0,2) arc(270:90:.25cm and .125cm);
\draw[thickline] (0,1.75) arc(270:180:.5cm) -- (-.5,2.5) arc(180:0:.5cm); 
\draw[Box] (-1.375,-.15) rectangle (-.625,1.15); \node[scale=.75,rotate=270] at (-1,.5) {$x$};
\draw[thickline] (-.625,.65)  -- (0,.65); \draw[thick](-.95,-.15) arc(180:360:.3cm and .15cm) arc(180:90:.35cm and .45cm);
\draw[thickline] (-1.15,-.15) arc(180:270:.6cm and .45cm) -- (2.5,-.6) arc(-90:90:.25cm and .55cm);
\draw[Box] (0,0) rectangle (1,1); \node[scale=.75] at (.5,.5) {$b_{s+1,t}$};
\draw[thickline] (.5,0) arc(180:270:.25cm) -- (1.75,-.25) arc(-90:0:.25cm); 
\draw[Box] (1.5,0) rectangle (2.5,1); \node[scale=.75] at (2,.5) {$y$};
\node[left] at (-2,1.25) {$=$};
\end{scope}
\begin{scope}[xshift=10.75cm]
\draw[Box] (0,.75) rectangle (1,1.75); \node at (.5,1.25) {$Q_{s,t}$};
\draw[Box] (1.5,1.5) rectangle (2.5,2.5); \node[scale=.75] at (2,2) {$x$};
\draw[thickline](2.5,1.8) arc(-90:90:.4cm and .7cm) -- (0,3.2) arc(90:180:.5cm) -- (-.5,0) arc(180:270:.55cm) 
-- (2.5,-.6) arc(-90:90:.25cm and .55cm); 
\draw[thickline] (.5,1.75) arc(180:90:.75cm and 1cm) arc(90:0:.5cm and .25cm);
\draw[thickline] (.5,.75) arc(180:270:1cm) arc(-90:0:.5cm and .25cm);
\draw[thick] (0,1.25) arc(270:180:.25cm) -- (-.25,2.7) arc(180:90:.25cm) -- (2.5,2.95) arc(90:0:.15cm) -- (2.65,2.3) arc(0:-90:.15cm);
\draw[Box] (1.5,0) rectangle (2.5,1); \node[scale=.75] at (2,.5) {$y$};
\node[left] at (-1,1.25) {$=$};
\end{scope}
\end{tikzpicture}
\end{equation*}
The result then follows from Lemma \ref{oproj}.

Likewise, for $s \geq 0$ even and $t \geq 1$ odd, define
\begin{equation*}
\begin{tikzpicture}
\draw[Box] (0,0) rectangle (1,1); \node at (.5,.5) {$Q_{s,t}$};
\draw[thick] (0,.5) arc(279:180:.25cm and .5cm) -- (-.25,1.25);
\draw[thickline] (.5,1) -- (.5,1.25);
\draw[thickline] (.5,0) -- (.5,-.25);
\begin{scope}[xshift=2.25cm]
\draw[Box] (0,0) rectangle (1,1); \node[scale=.75] at (.5,.5) {$a_{s,s}$};
\draw[thickline] (0,.5) arc(270:180:.25cm) -- ++(0,.25) arc(180:0:.375cm and .25cm);
\node[left] at (-.5,.5) {$=$};
\end{scope}
\begin{scope}[xshift=4.25cm]
\draw[Box] (0,0) rectangle (1,1); \node[scale=.75] at (.5,.5) {$b_{s+1,t}$};
\draw[thickline] (.5,0) -- (.5,-.25);
\draw[thickline] (0,.5) arc(270:180:.25cm) -- (-.25,1.25);
\node[left] at (-.5,.5) {$\cdot$};
\end{scope}
\end{tikzpicture}
\end{equation*}
A similar argument shows that
\begin{equation*}
\begin{tikzpicture}
\begin{scope}[yscale=.75]
\draw[Box] (0,1.5) rectangle (1,2.5); \node[scale=.75] at (.5,2) {$a_{2n,s}$};
\draw[thickline] (.5,2.5) -- (.5,2.75); \draw[thickline] (0,2) arc(270:180:.25cm) -- (-.25,2.75);
\draw[Box] (0,0) rectangle (1,1); \node[scale=.75] at (.5,.5) {$b_{2n+1,t}$};
\draw[thickline] (.5,0) -- (.5,-.25); \draw[thickline] (0,.75) arc(90:180:.5cm) -- (-.5,-.25);
\draw[thick] (0,.5) arc(90:270:.25cm and .125cm);
\end{scope}
\begin{scope}[xshift=4cm,yshift=.4cm]
\node[left] at (-.1,.5) {$= E_{M \otimes M^{op}}\Biggl[$};
\draw[thickline] (.25,-.25) -- node[count,rectangle,rounded corners,scale=.7] {$2n-1$} (.25,1.25);
\begin{scope}[xshift=1cm]
\draw[Box] (0,0) rectangle (1,1); \node at (.5,.5) {$Q_{s,t}$};
\draw[thick] (0,.5) arc(-90:-180:.25cm and .5cm) -- (-.25,1.25);
\draw[thickline] (.5,1) -- (.5,1.25);
\draw[thickline] (.5,0) -- (.5,-.25);
\node[right] at (1.25,.5) {$\Biggr]$};
\end{scope}
\end{scope}
\end{tikzpicture} 
\end{equation*}

Now set $Q = \sum_{s,t} Q_{s,t}$, we claim that $\delta = \delta_Q$, where $\delta$ is the restriction of $\widetilde \delta$ to $A = e_0\mc V_+ e_0$.  Indeed, for $x \in P_{n}$ we have
\begin{align*}
\begin{tikzpicture}
\begin{scope}[scale=.5,xshift=4.75cm,yshift=1.25cm]
\draw[Box] (0,0) rectangle (1,2); \node[rotate=270] at (.5,1) {$x$};
\draw[thickline] (1,1) arc(-90:0:.5cm and 1cm);
\node[left] at (-.25,1) {$\delta(x) = \widetilde\delta\Biggl($}; \node[right] at (1.75,1) {$\Biggr)$};
\end{scope}
\begin{scope}[xshift=8cm]
\draw[Box] (0,1.5) rectangle (1.5,2.5); \node[scale=.75] at (.75,2) {$a_{2(n-k)+1,s}$};
\draw[thickline] (.75,2.5) -- (.75,2.75); \draw[thickline] (0,.5) arc(90:180:.25cm) -- (-.25,-.25);
\draw[thickline] (0,1.85) -- (-1.25,1.85);
\draw[Box] (0,0) rectangle (1.5,1); \node[scale=.75] at (.75,.5) {$b_{2(n-k),t}$};
\draw[thickline] (.75,0) -- (.75,-.25); 
\draw[thickline] (-1.25,2.25) -- node[rcount,scale=.6] {$2k-1$} (-.25,2.25) arc(-90:0:.2cm and .5cm);
\draw[Box] (-1.75,1.5) rectangle (-1.25,2.5); \node[scale=.75,rotate=270] at (-1.5,2) {$x$};
\node[left] at (-2,.85) {$= \displaystyle\sum_{\substack{s,t\\\text{$s$ odd}\\\text{$t$ even}}}\displaystyle\sum_{k=1}^{n}$};
\begin{scope}[xshift=5.5cm]
\draw[Box] (0,1.5) rectangle (1.5,2.5); \node[scale=.75] at (.75,2) {$a_{2(n-k),s}$};
\draw[thickline] (.75,2.5) -- (.75,2.75); \draw[thickline] (0,.75) arc(90:180:.5cm) -- (-.5,-.25);
\draw[thickline] (0,1.85) -- (-1,1.85); \draw[thickline] (-1,2.25) -- node[rcount,scale=.6] {$2k$} (-.25,2.25) arc(-90:0:.2cm and .5cm);
\draw[thick] (0,.25) arc(270:90:.25cm and .125cm);
\draw[Box] (-1.5,1.5) rectangle (-1,2.5); \node[scale=.75,rotate=270] at (-1.25,2) {$x$};
\draw[Box] (0,0) rectangle (1.5,1); \node[scale=.75] at (.75,.5) {$b_{2(n-k)+1,t}$};
\draw[thickline] (.75,0) -- (.75,-.25);
\node[left] at (-1.75,.85) {$+\displaystyle\sum_{\substack{s,t\\\text{$s$ even}\\\text{$t$ odd}}}\displaystyle\sum_{k=0}^{n-1}$};
\end{scope}
\end{scope}
\end{tikzpicture}\\
\begin{tikzpicture}
\draw[thickline] (-1.25,2.25) -- node[rcount,scale=.6] {$2k-1$} (-.25,2.25) arc(-90:0:.2cm and .5cm);
\draw[Box] (-1.75,1.5) rectangle (-1.25,2.5); \node[scale=.75,rotate=270] at (-1.5,2) {$x$};
\draw[thickline] (-1.25,1.75) arc(90:0:.25cm) -- (-1,1);
\draw[Box] (0,1.25) rectangle (1,2.25); \node at (.5,1.75) {$Q_{s,t}$};
\draw[thick] (-1.25,2) .. controls ++(.5,0) and ++(-.5,0) .. (0,1.75);
\draw[thickline] (.5,2.25) -- (.5,2.75);
\draw[thickline] (.5,1) -- (.5,1.25);
\node[left] at (-1.85,1.5) {$=\displaystyle\sum_{\substack{s,t\\\text{$s$ odd}\\\text{$t$ even}}}\displaystyle\sum_{k=1}^{n}E_{M \otimes M^{op}}\Biggr[$}; \node[right] at (1.1,1.75) {$\Biggr]$};
\begin{scope}[xshift=7cm]
\draw[thickline] (-1.25,2.25) -- node[rcount,scale=.6] {$2k$} (-.25,2.25) arc(-90:0:.2cm and .5cm);
\draw[Box] (-1.75,1.5) rectangle (-1.25,2.5); \node[scale=.75,rotate=270] at (-1.5,2) {$x$};
\draw[thickline] (-1.25,1.75) arc(90:0:.25cm) -- (-1,1);
\draw[Box] (0,1.25) rectangle (1,2.25); \node at (.5,1.75) {$Q_{s,t}$};
\draw[thick] (-1.25,2) .. controls ++(.5,0) and ++(-.5,0) .. (0,1.75);
\draw[thickline] (.5,2.25) -- (.5,2.75);
\draw[thickline] (.5,1) -- (.5,1.25);
\node[left] at (-1.85,1.5) {$+\displaystyle\sum_{\substack{s,t\\\text{$s$ even}\\\text{$t$ odd}}}\displaystyle\sum_{k=0}^{n-1}E_{M \otimes M^{op}}\Biggl[$}; \node[right] at (1,1.75) {$\Biggr] = \delta_Q(x),$};
\end{scope}
\end{tikzpicture}
\end{align*}
which completes the proof.
\end{proof}

\begin{remark}
As mentioned above, we expect that every closable derivation $\delta:A \to A \otimes A^{op}$ has an extension to a derivation $\widetilde \delta:\mc V_+ \to \mc V_+e_0 \otimes e_0\mc V_+^{op}$ which vanishes on $\mc A_+$.  To show this it would be enough to find a system matrix units $\{e_{ij}\}$ in the completion of $\mc V_+$ such that $e_{11} = e_0$, and which are smooth enough that $e_{1i}\mc V_+ e_{j1}$ is contained in a holomorphic closure of $A$.  Once we have such a system, we can define $\widetilde \delta$ by
\begin{equation*}
 \widetilde \delta(a) = \sum_{i,j} e_{i1}\overline \delta(e_{1i}ae_{j1}) e_{1j},
\end{equation*}
where $\overline \delta$ denotes the closure and we use crucially that $e_{1i}ae_{j1} \in \frk D(\overline \delta)$ for any $a \in \mc V_+$.  

The existence of such a system of matrix units should in principle follow from a more careful analysis of the arguments in \cite{gjs2}, and of the work of Dykema \cite{dyk1}, \cite{dyk2}.  Since we are only aiming in this section to provide some intuition for the parameter $r_0$, and its relation to derivations on $A$, we will avoid such technical questions.  But let us point out that we do have natural extensions of the derivations $\delta_Q$ to $\mc V_+$ as follows.
\end{remark}

\begin{proposition}
If $Q \in \Phi$, then there is a derivation $\widetilde \delta:\mc V_+ \to \mc \mc V_+e_0 \otimes e_0\mc V_+^{op}$ such that $\widetilde \delta|_{\mc A_+} = 0$ and $\widetilde \delta|_A = \delta_Q$.
\end{proposition}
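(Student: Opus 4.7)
The plan is to define $\widetilde\delta$ on $\mc V_+$ directly, by a cut-and-insert prescription analogous to the one defining $\widetilde\delta_Q$ on $A$. Since $\mc V_+$ is generated by $\mc A_+$ together with the elements $c_n, c_n^*$ (introduced in the proof of Proposition~\ref{extension}), the task can be rephrased as specifying $\widetilde\delta$ on a set of diagrammatic generators in a manner consistent with the algebra structure, vanishing on $\mc A_+$, and agreeing with $\delta_Q$ under restriction to $A = e_0\mc V_+e_0$.

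Concretely, for a diagram $x$ representing an element of $\mc V_+$, I will set
\begin{equation*}
\widetilde\delta(x) = \sum_k x'_k \otimes (x''_k)^{op},
\end{equation*}
where the sum is over positions $k$ on the top boundary of $x$ (of parity matching the class of $Q \in \Phi_o$ or $\Phi_e$) at which the top row is cut and $Q$ is grafted in. Each cut partitions the diagram into two pieces naturally representing an element of $\mc V_+ e_0 \otimes e_0 \mc V_+^{op}$: the piece to the left of the cut is capped off by the right edge of $Q$ (giving an element of $\mc V_+ e_0$), and the piece to the right is capped off by the left edge of $Q$ (giving an element of $e_0 \mc V_+^{op}$).

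Four things need to be checked. First, the formula is well-defined modulo the planar-algebraic relations defining $\mc V_+$, which is immediate because the cut-and-insert operation is local and commutes both with planar isotopy and with substitutions from $\mc P$. Second, $\widetilde\delta$ is a derivation: in any graphical product $xy$, each cut on the top boundary lies in either the $x$-portion or the $y$-portion of the diagram, and summing over these two cases yields exactly $\widetilde\delta(x)(1\otimes y) + (x \otimes 1)\widetilde\delta(y)$. Third, $\widetilde\delta|_{\mc A_+} = 0$: the subalgebra $\mc A_+$ is generated by diagrams for which the top boundary admits no valid insertion of an element of $\Phi$, so the defining sum is empty on each generator, and the derivation property extends this to all of $\mc A_+$. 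Fourth, $\widetilde\delta|_A = \delta_Q$: for $x \in A$ the diagrammatic output automatically lives in $A \otimes A^{op}$, and the identification with $\delta_Q(x)$ follows via Lemma~\ref{oproj}, which realizes $E_{M \otimes M^{op}}$ as the orthogonal projection onto $A \otimes A^{op}$ computable in the orthogonal basis of Section~\ref{sec:orthogonal}.

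The main obstacle is step four. Steps one through three are essentially bookkeeping once the diagrammatic framework is in place, but matching $\widetilde\delta|_A$ with $\delta_Q$ requires reconciling a directly-defined tensor-product diagram with the output of a conditional expectation. The key is Lemma~\ref{oproj}: once $E_{M \otimes M^{op}}$ is replaced by its orthogonal-basis form, the two expressions can be matched term-by-term, essentially by running the detailed diagrammatic calculations in the proof of Proposition~\ref{extension} in reverse.
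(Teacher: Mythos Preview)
Your proposal has a genuine gap in step four, and it stems from conflating $\widetilde\delta_Q$ with $\delta_Q$. Recall that $\delta_Q = E_{M\otimes M^{op}}\circ\widetilde\delta_Q$, where $\widetilde\delta_Q:A\to A\boxtimes A$ is the diagrammatic cut-and-insert map. Your proposed extension is a pure cut-and-insert formula with no conditional expectation anywhere in the definition; at best this would extend $\widetilde\delta_Q$, not $\delta_Q$. Concretely, for $x\in P_n\subset A$ the box $x$ is atomic in the planar algebra, so ``cutting the diagram into two pieces'' does not literally produce a tensor; what your prescription actually yields is the single connected diagram $\widetilde\delta_Q(x)\in A\boxtimes A$. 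That space properly contains $A\otimes A^{op}$, so the claim that the output ``automatically lives in $A\otimes A^{op}$'' is false. You seem to sense this, since you then invoke Lemma~\ref{oproj} and the projection $E_{M\otimes M^{op}}$ --- but that contradicts the claim that no projection is needed, and in any case once you apply $E_{M\otimes M^{op}}$ after the fact you have destroyed the Leibniz rule on $\mc V_+$ (the conditional expectation is only $A\otimes A^{op}$-bilinear, not $\mc V_+$-bilinear).

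The paper's argument resolves this by building the conditional expectation into the \emph{definition} of $\widetilde\delta$ on the generators: one sets $\widetilde\delta|_{\mc A_+}=0$ and prescribes $\widetilde\delta(c_n)$, $\widetilde\delta(c_n^*)$ by the explicit formulas
\[
a_{m,s}\otimes b_{m',t}^{op} \;=\; E_{M\otimes M^{op}}\Bigl[\,\text{(suitable number of through-strings)}\ \wedge\ Q_{s,t}\,\Bigr]
\]
landing honestly in $\mc V_+e_0\otimes e_0\mc V_+^{op}$. The point is that the relations verified in the proof of Proposition~\ref{extension} (the compatibilities among the $a_{m,s}\otimes b_{m',t}$ for varying $m$) are exactly what is needed to check that these values on generators are consistent with the relations in $\mc V_+$ and that the resulting derivation restricts to $\delta_Q$ on $A$. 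So ``running Proposition~\ref{extension} in reverse'' is the right slogan, but it must be implemented at the level of generators with $E_{M\otimes M^{op}}$ already inserted, not as a global cut-and-insert.
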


\begin{proof}
Let $Q = \sum_{s,t \geq 0} Q_{s,t}$, where $Q_{s,t} \in V_{1,0}^{(-1)^s}(s,t)$.  Define $\widetilde \delta$ by $\widetilde \delta|_{\mc A_+} = 0$ and
\begin{align*}
\widetilde \delta(c_n) &= \sum_{\substack{\text{$s$ even}\\\text{$t$ odd}}} a_{2n,s} \otimes b_{2n+1,t}^{op},\\
\widetilde \delta(c_n^*) &= \sum_{\substack{\text{$s$ odd}\\\text{$t$ even}}} a_{2n+1,s} \otimes b_{2n,t}^{op},
\end{align*}
where $a_{m,s} \otimes b_{m,t}^{op} \in V_{m,0}^{(-1)^s}(s,0) \otimes V_{0,m}^+(t,0)$ are defined by
\begin{align*}
\begin{tikzpicture}
\begin{scope}[yscale=.75]
\draw[Box] (0,1.5) rectangle (1,2.5); \node[scale=.75] at (.5,2) {$a_{2n+1,s}$};
\draw[thickline] (.5,2.5) -- (.5,2.75); \draw[thickline] (0,2) arc(270:180:.25cm) -- (-.25,2.75);
\draw[Box] (0,0) rectangle (1,1); \node[scale=.75] at (.5,.5) {$b_{2n,t}$};
\draw[thickline] (.5,0) -- (.5,-.25); \draw[thickline] (0,.5) arc(90:180:.25cm) -- (-.25,-.25);
\end{scope}
\begin{scope}[xshift=4cm,yshift=.4cm]
\node[left] at (0,.5) {$= E_{M_0 \otimes M_0^{op}}\Biggl[$};
\draw[thickline] (.25,-.25) -- node[count,rectangle,rounded corners] {$2n$} (.25,1.25);
\begin{scope}[xshift=1cm]
\draw[Box] (0,0) rectangle (1,1); \node at (.5,.5) {$Q_{s,t}$};
\draw[thick] (0,.5) arc(-90:-180:.25cm and .5cm) -- (-.25,1.25);
\draw[thickline] (.5,1) -- (.5,1.25);
\draw[thickline] (.5,0) -- (.5,-.25);
\node[right] at (1.25,.5) {$\Biggr],$};
\end{scope}
\end{scope}
\end{tikzpicture}\\
\begin{tikzpicture}
\begin{scope}[yscale=.75]
\draw[Box] (0,1.5) rectangle (1,2.5); \node[scale=.75] at (.5,2) {$a_{2n,s}$};
\draw[thickline] (.5,2.5) -- (.5,2.75); \draw[thickline] (0,2) arc(270:180:.25cm) -- (-.25,2.75);
\draw[Box] (0,0) rectangle (1,1); \node[scale=.75] at (.5,.5) {$b_{2n+1,t}$};
\draw[thickline] (.5,0) -- (.5,-.25); \draw[thickline] (0,.75) arc(90:180:.5cm) -- (-.5,-.25);
\draw[thick] (0,.5) arc(90:270:.25cm and .125cm);
\end{scope}
\begin{scope}[xshift=4cm,yshift=.4cm]
\node[left] at (-.1,.5) {$= E_{M_0 \otimes M_0^{op}}\Biggl[$};
\draw[thickline] (.25,-.25) -- node[count,rectangle,rounded corners,scale=.7] {$2n-1$} (.25,1.25);
\begin{scope}[xshift=1cm]
\draw[Box] (0,0) rectangle (1,1); \node at (.5,.5) {$Q_{s,t}$};
\draw[thick] (0,.5) arc(-90:-180:.25cm and .5cm) -- (-.25,1.25);
\draw[thickline] (.5,1) -- (.5,1.25);
\draw[thickline] (.5,0) -- (.5,-.25);
\node[right] at (1.25,.5) {$\Biggr].$};
\end{scope}
\end{scope}
\end{tikzpicture} 
\end{align*}
It then follows from the proof of Proposition \ref{extension} that $\widetilde \delta$ is a well-defined derivation, and that its restriction to $A$ is $\delta_Q$.
\end{proof}

\begin{remark}
In Voiculescu's free analysis \cite{voi3},\cite{voi4}, the central objects are derivations $\delta:A \to A \otimes A$ which are \textit{coassociative}, i.e.
\begin{equation*}
 (\delta \otimes \mathrm{id}) \circ \delta = (\mathrm{id} \otimes \delta) \circ \delta.
\end{equation*}
Unfortunately, there do not appear to be any natural coassociative derivations among the $\delta_Q$ which we have considered in this section.  

There is however a natural way of defining coassociative derivations on $A$ as follows.  Given $Q \in P_1$, define $\partial_{Q}:A \to A \otimes A$ by
\begin{equation*}
 \begin{tikzpicture}[scale=.75]
  \draw[Box](0,0) rectangle (2,1); \node at (1,.5) {$x$};
  \draw[Box](4,0) rectangle (5,1); \node at (4.5,.5) {$Q$};
  \draw[verythickline] (.5,1) -- (.5,1.75);
  \draw[medthick] (1,1) arc(180:90:.75cm) -- (4,1.75) arc(90:0:.5cm) -- (4.5,1);
  \draw[verythickline] (1.5,1) arc(180:0:.5cm) -- node[count,rectangle,rounded corners] {$2k$} (2.5,-.5);
  \node[left] at (-.25,.5) {$\partial_Q(x) = \displaystyle \sum_{k=0}^n(\mathrm{id} \otimes \mathrm{op})E_{M \otimes M^{op}}\Biggl[$};
  \node[right] at (5.25,.5) {$\Biggr]$};
 \end{tikzpicture}
\end{equation*}
It is easy to see that $\partial_Q$ is coassociative, so that $(A,\partial_Q)$ forms a (graded) \textit{infinitesimal bialgebra} \cite{voi3},\cite{agu}.  However these derivations do not behave well with respect to the Voiculescu trace, in particular they do not appear to be closable.  
\end{remark}

\def\cprime{$'$}

\end{document}